\documentclass{amsart}

\pdfoutput=1

\usepackage{amsmath}
\usepackage{amssymb}
\usepackage{bm}
\usepackage{bbm}
\usepackage{enumerate}
\usepackage{booktabs} 
\usepackage{appendix}
\usepackage{mathrsfs}
\usepackage{hyperref}
\usepackage{graphicx, color}
\hypersetup{
  pdftitle={Asymptotic error in the eigenfunction expansion for the Green's
    function of a Sturm--Liouville problem},
  pdfauthor={Karen Habermann},
}

\usepackage[T1]{fontenc}
\usepackage[utf8]{inputenc}

\usepackage{xcolor}

\numberwithin{equation}{section}

\setlength{\parindent}{0pt}
\setlength{\marginparwidth}{4cm}
\frenchspacing
\sloppy
%%%%%%%%%%%%%%%%%%%%%%%%%%%%%%%%%%%%%%%%%%%%%%%%%%%%%%%%%%%%%%%%%%%%%%%%%%%%%%%
\textheight206mm
\textwidth150mm
\hoffset-13mm
%%%%%%%%%%%%%%%%%%%%%%%%%%%%%%%%%%%%%%%%%%%%%%%%%%%%%%%%%%%%%%%%%%%%%%%%%%%%%%%

\newcommand{\R}{\mathbb{R}}
\newcommand{\N}{\mathbb{N}}

\newcommand{\E}{\mathbb{E}}

\newcommand{\e}{\operatorname{e}}
\newcommand{\dd}{\,{\mathrm d}}
\newcommand{\db}{{\mathrm d}}

\newcommand{\lo}{\mathcal{L}}

\newcommand{\eps}{\varepsilon}
\newcommand{\pt}{\partial}

\newcommand{\sqp}{\sqrt{\pi}\,}

\newtheorem{lemma}{Lemma}[section]

\newtheorem{propn}[lemma]{Proposition}
\newtheorem{thm}[lemma]{Theorem}
\newtheorem{cor}[lemma]{Corollary}

\newtheorem{remark}[lemma]{Remark}
\newtheorem{eg0}[lemma]{Example}

%\newenvironment{remark}{\begin{remark0}\rm}{\hspace*{\fill} $\square$
%                        \end{remark0}}

%%%%%%%%%%%%%%%%%%%%%%%%%%%%%%%%%%%%%%%%%%%%%%%%%%%%%%%%%%%%%%%%%%%%%%%%%%%%%%%
\makeatletter
\@namedef{subjclassname@2020}{%
  \textup{2020} Mathematics Subject Classification}
\makeatother
%%%%%%%%%%%%%%%%%%%%%%%%%%%%%%%%%%%%%%%%%%%%%%%%%%%%%%%%%%%%%%%%%%%%%%%%%%%%%%%
\author[K. Habermann]{Karen Habermann}
\address{Karen Habermann, Department of Statistics, University of
  Warwick, Coventry, CV4 7AL, United Kingdom.}
\email{karen.habermann@warwick.ac.uk}

\keywords{Sturm--Liouville theory, Green's function, Orthogonal
  polynomials, Rate of convergence}
\subjclass[2020]{34B24, 34B27, 34E05, 33C45, 41A25, 60F05}
\title[Asymptotic error in the eigenfunction expansion for the Green's
  function]{Asymptotic error in the eigenfunction expansion for the Green's
    function of a Sturm--Liouville problem}
%%%%%%%%%%%%%%%%%%%%%%%%%%%%%%%%%%%%%%%%%%%%%%%%%%%%%%%%%%%%%%%%%%%%%%%%%%%%%%%
\begin{document}
\begin{abstract}
  We study the asymptotic error arising when approximating the
  Green's function of a Sturm--Liouville problem through a truncation of its
  eigenfunction expansion, both for the Green's function of a regular
  Sturm--Liouville problem and for the Green's function associated
  with the Hermite polynomials, the associated
  Laguerre polynomials, and the Jacobi polynomials, respectively.
  We prove that the asymptotic error obtained on the diagonal can be
  expressed in terms of the coefficients of the related second-order
  Sturm--Liouville differential equation, and that the suitable scaling
  exponent which yields a non-degenerate limit on the diagonal depends
  on the asymptotic behaviour of the corresponding eigenvalues. We
  further consider the asymptotic error away from the diagonal and
  analyse which scaling exponents ensure that it
  remains at zero. For the Hermite polynomials, the associated
  Laguerre polynomials, and the Jacobi polynomials, a
  Christoffel--Darboux type formula, which we establish for all
  classical orthogonal polynomial systems, allows us to obtain a
  better control away from the diagonal than what a sole application of
  known asymptotic formulae gives. As a consequence of our study
  for regular Sturm--Liouville problems, we identify the
  fluctuations for the Karhunen--Lo{\`e}ve expansion of
  Brownian motion.
\end{abstract}

\maketitle
\thispagestyle{empty}
%%%%%%%%%%%%%%%%%%%%%%%%%%%%%%%%%%%%%%%%%%%%%%%%%%%%%%%%%%%%%%%%%%%%%%%%%%%%%%%
\section{Introduction}

In a regular Sturm--Liouville problem, we are concerned
with a real second-order linear differential operator
\begin{displaymath}
  \lo =
  -\frac{1}{w(x)}\left(
    \frac{\db}{\db x}\left(p(x)\frac{\db}{\db x}\right) -q(x)
  \right)
\end{displaymath}
for a positive continuously
differentiable function $p\colon [a,b]\to\R$, a continuous
function $q\colon [a,b]\to\R$ as well as a positive continuous function
$w\colon [a,b]\to\R$ defined on a finite interval $I=[a,b]$, and we are
interested in solving the eigenvalue equation, for $x\in[a,b]$,
\begin{displaymath}
  \lo \phi(x)=\lambda \phi(x)
\end{displaymath}
subject to the separated homogeneous boundary conditions
\begin{align*}
  \alpha_1 \phi(a)+\alpha_2 \phi'(a)&=0\;,
  \qquad\alpha_1^2+\alpha_2^2>0\;,\\
  \beta_1 \phi(b)+\beta_2 \phi'(b)&=0\;,
  \qquad\beta_1^2+\beta_2^2>0\;,
\end{align*}
for constants $\alpha_1,\alpha_2,\beta_1,\beta_2\in\R$. Non-zero
solutions $\phi$ to the Sturm--Liouville problem are called
eigenfunctions with $\lambda$ being the corresponding eigenvalue.

Since the early works by Sturm and Liouville~\cite{SL}, the theory of
Sturm--Liouville problems has seen a vast development with further
contributions, among many others, due to Weyl~\cite{weyl},
Dixon~\cite{dixon}, and Titchmarsh~\cite{titch1,titch2}.
It is by now well-known that, for a given Sturm--Liouville problem,
there exists a family $\{\phi_n: n\in\N_0\}$ of real eigenfunctions that
forms a complete
orthogonal set under the weighted inner product in the Hilbert space
$L^2([a,b], w(x)\dd x)$ which can be ordered such that the
corresponding real eigenvalues $\{\lambda_n: n\in\N_0\}$ form a sequence
which strictly increases to infinity. The completeness and
orthogonality relation of the eigenfunctions can be expressed in the sense
of distributions in terms
of the Dirac delta function as, for $x,y\in[a,b]$,
\begin{displaymath}
  \sum_{n=0}^\infty
  \frac{\phi_n(x)\phi_n(y)}{\int_a^b\left(\phi_n(z)\right)^2w(z)\dd z}
  =\delta(x-y)\;.
\end{displaymath}
Moreover, provided all eigenvalues are non-zero, the Green's function
$G\colon[a,b]\times[a,b]\to\R$ of the
linear differential operator $\lo$ subject to the separated homogeneous
boundary conditions stated above has the eigenfunction expansion
given by, for $x,y\in[a,b]$,
\begin{displaymath}
  G(x,y)=\sum_{n=0}^\infty
  \frac{\phi_n(x)\phi_n(y)}
  {\lambda_n\int_a^b\left(\phi_n(z)\right)^2w(z)\dd z}\;.
\end{displaymath}
This expansion is also referred to as the bilinear expansion of the
Green's function. Formally, we see that indeed
\begin{displaymath}
  \lo G(x,y)=\sum_{n=0}^\infty
  \frac{\lo\phi_n(x)\phi_n(y)}
  {\lambda_n\int_a^b\left(\phi_n(z)\right)^2w(z)\dd z}
  =\sum_{n=0}^\infty
  \frac{\phi_n(x)\phi_n(y)}
  {\int_a^b\left(\phi_n(z)\right)^2w(z)\dd z}
  =\delta(x-y)\;.
\end{displaymath}
The present article is concerned with studying the pointwise limit
\begin{equation}\label{eq:green_fluct}
  \lim_{N\to\infty} N^\gamma\sum_{n=N+1}^\infty\frac{\phi_n(x)\phi_n(y)}
  {\lambda_n\int_a^b\left(\phi_n(z)\right)^2w(z)\dd z}\;,
\end{equation}
provided the eigenvalues are ordered in increasing order, and
for a suitable exponent $\gamma>0$ which may take a different value
on the diagonal $\{x=y\}$ than away from the diagonal.
The resulting limit quantifies the asymptotic error in approximating
the Green's function $G$ with its truncation $G_N$ defined by, for
$x,y\in[a,b]$,
\begin{displaymath}
  G_N(x,y)=\sum_{n=0}^N
  \frac{\phi_n(x)\phi_n(y)}
  {\lambda_n\int_a^b\left(\phi_n(z)\right)^2w(z)\dd z}
\end{displaymath}
as $N\to\infty$. We further observe that the limit~(\ref{eq:green_fluct}) of
interest can be considered both for regular Sturm--Liouville
problems which admit a zero eigenvalue and for singular Sturm--Liouville
problems which admit a complete set of orthogonal eigenfunctions.

In a
singular Sturm--Liouville problem, the interval $I$ on which the
problem is studied may
be infinite, in which case suitable singular boundary conditions
are imposed as discussed in Littlejohn and Krall~\cite{littlekrall},
the function $p$ may vanish on $I$ and the function $w$
may vanish or be ill-defined at the boundary points of $I$.
A rich class of functions arising as solutions to singular
Sturm--Liouville problems are the classical orthogonal polynomials,
which are related by linear transformations to the Hermite
polynomials, the associated Laguerre polynomials,
and the Jacobi polynomials. As
conjectured by Acz\'{e}l~\cite{aczel} and as established by
Hahn~\cite{hahn}, Feldmann~\cite{feldmann}, Mikol\'{a}s~\cite{mikolas}
and Lesky~\cite{lesky}, the classical orthogonal polynomial systems are the
only orthogonal polynomial systems which arise as solutions to
second-order Sturm--Liouville
differential equations.

In our analysis of the asymptotic error in the eigenfunction expansion
for the Green's function, we start by determining the
limit~(\ref{eq:green_fluct}) for all regular Sturm--Liouville
problems where
the coefficients of the linear differential operator $\lo$ satisfy
$p,w\in C^2([a,b])$. This additional requirement is
currently needed as our proof employs the Liouville
transformation to reduce the system to a simplified Sturm--Liouville
problem for which explicit asymptotic formulae for the eigenfunctions
and the eigenvalues are known. It would be of interest to study in
future work if this assumption can be relaxed.
\begin{thm}\label{thm:regularSL}
  Fix $a,b\in\R$. For a function $q\in C([a,b])$ and positive functions $p,w\in
  C^2([a,b])$, let $\{\phi_n:n\in\N_0\}$ be a complete orthogonal set 
  of eigenfunctions for the regular Sturm--Liouville problem
  \begin{displaymath}
    \frac{\db}{\db x}\left(p(x)\frac{\db\phi(x)}{\db x}\right) -q(x)\phi(x)
    =-\lambda w(x)\phi(x)
  \end{displaymath}
  on the finite interval $[a,b]$ subject to separated homogeneous
  boundary conditions.
  Assume that the eigenfunctions are ordered such that the
  corresponding eigenvalues $\{\lambda_n:n\in\N_0\}$ form a strictly
  increasing sequence. We have, for $x\in (a,b)$,
  \begin{displaymath}
    \lim_{N\to\infty} N\sum_{n=N+1}^\infty \frac{\left(\phi_n(x)\right)^2}
    {\lambda_n\int_a^b\left(\phi_n(z)\right)^2w(z)\dd z}
    =\frac{1}{\pi^2\sqrt{p(x)w(x)}}\int_a^b\sqrt{\frac{w(z)}{p(z)}}\dd z
  \end{displaymath}
  and, for $x,y\in [a,b]$ with $x\not= y$,
  \begin{displaymath}
    \lim_{N\to\infty} N\sum_{n=N+1}^\infty \frac{\phi_n(x)\phi_n(y)}
    {\lambda_n\int_a^b\left(\phi_n(z)\right)^2w(z)\dd z}
    =0\;.
  \end{displaymath}
  Moreover, depending on the separated homogeneous boundary
  conditions, we have
  \begin{displaymath}
    \lim_{N\to\infty} N\sum_{n=N+1}^\infty \frac{\left(\phi_n(a)\right)^2}
    {\lambda_n\int_a^b\left(\phi_n(z)\right)^2w(z)\dd z}=
    \begin{cases}
      \displaystyle{\frac{2}{\pi^2\sqrt{p(a)w(a)}}
      \int_a^b\sqrt{\frac{w(z)}{p(z)}}\dd z}
      & \text{if } \alpha_2\not = 0\\[0.7em]
      0 & \text{if }\alpha_2=0
    \end{cases}
  \end{displaymath}
  as well as
  \begin{displaymath}
    \lim_{N\to\infty} N\sum_{n=N+1}^\infty \frac{\left(\phi_n(b)\right)^2}
    {\lambda_n\int_a^b\left(\phi_n(z)\right)^2w(z)\dd z}=
    \begin{cases}
      \displaystyle{\frac{2}{\pi^2\sqrt{p(b)w(b)}}
      \int_a^b\sqrt{\frac{w(z)}{p(z)}}\dd z}
      & \text{if } \beta_2\not = 0\\[0.7em]
      0 & \text{if }\beta_2=0
    \end{cases}\;.
  \end{displaymath}
\end{thm}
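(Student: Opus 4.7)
The plan is to use the Liouville transformation to reduce to the normal-form Sturm--Liouville problem and then apply classical uniform asymptotic expansions for its eigenfunctions and eigenvalues. I introduce new variables
\begin{displaymath}
  t(x)=\int_a^x\sqrt{w(z)/p(z)}\dd z,\qquad L=t(b),\qquad \psi(t)=(p(x)w(x))^{1/4}\phi(x),
\end{displaymath}
which transform the eigenvalue equation into the normal form $-\psi''(t)+Q(t)\psi(t)=\lambda\psi(t)$ on $[0,L]$ with continuous potential $Q$, using the $C^2$-regularity of $p$ and $w$. Since $w(x)\dd x=\sqrt{p(x)w(x)}\dd t$ and $\psi(t)^2=\sqrt{p(x)w(x)}\phi(x)^2$, the weighted $L^2$-norm is preserved, so that $\int_a^b\phi_n(z)^2 w(z)\dd z=\int_0^L\psi_n(t)^2\dd t$. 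Dirichlet conditions ($\alpha_2=0$ or $\beta_2=0$) remain Dirichlet for $\psi$, while Robin conditions transform to Robin conditions with possibly modified coefficients.

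For the normal-form problem I would invoke the classical uniform asymptotic expansions (as in Levitan--Sargsjan), giving
\begin{displaymath}
  \sqrt{\lambda_n}=\frac{n\pi}{L}+O(1),\qquad \psi_n(t)=\sqrt{2/L}\cos\bigl(\sqrt{\lambda_n}\,t-\vartheta\bigr)+O(1/n),
\end{displaymath}
uniformly in $t\in[0,L]$ for the $L^2$-normalized eigenfunctions, where the constant phase $\vartheta\in\{0,\pi/2\}$ is determined by the left boundary condition ($\vartheta=\pi/2$ for Dirichlet). Substituting back, the summand becomes
\begin{displaymath}
  \frac{\phi_n(x)^2}{\lambda_n\int_a^b\phi_n^2 w}=\frac{2}{L\lambda_n\sqrt{p(x)w(x)}}\cos^2\bigl(\sqrt{\lambda_n}\,t(x)-\vartheta\bigr)+O(n^{-3}).
\end{displaymath}
For the diagonal at $x\in(a,b)$ I would expand $\cos^2\alpha=\frac{1}{2}(1+\cos 2\alpha)$. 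The constant half yields $\sum_{n>N}L/(\pi^2 n^2\sqrt{p(x)w(x)})$, and $N\sum_{n>N}n^{-2}\to 1$ produces $L/(\pi^2\sqrt{p(x)w(x)})$, agreeing with the claim after identifying $L=\int_a^b\sqrt{w/p}\dd z$. The oscillatory half $\sum_{n>N}\cos(2\sqrt{\lambda_n}t(x)-2\vartheta)/n^2$ is bounded by $O(1/N)$ via Abel summation, uniformly on compact subsets of $(a,b)$, and hence vanishes under the $N$-scaling. The off-diagonal case is identical after applying $\cos\alpha\cos\beta=\frac{1}{2}(\cos(\alpha-\beta)+\cos(\alpha+\beta))$: because $t(x)\ne t(y)$, both resulting phases have non-zero $n$-coefficients and Abel summation again gives $O(1/N)$.

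At $x=a$ the cosine argument is $-\vartheta$. When $\alpha_2=0$, the Dirichlet condition forces $\phi_n(a)=0$ identically, giving the claimed zero limit. When $\alpha_2\ne 0$, we have $\vartheta=0$ and $\cos^2\vartheta=1$ replaces the averaging factor $1/2$ by $1$, doubling the interior calculation to yield the stated limit $(2/(\pi^2\sqrt{p(a)w(a)}))\int_a^b\sqrt{w/p}\dd z$. The symmetric treatment at $x=b$ proceeds via $t\mapsto L-t$. The main obstacle will be carrying the asymptotic expansion for $\psi_n$ uniformly up to the boundary and tracking the boundary-condition-induced phase correction $\vartheta_n=\vartheta+O(1/n)$ precisely enough that the $O(1/n)$ remainder in $\psi_n(0)^2$ does not corrupt the $N$-scaling at the endpoint. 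A secondary difficulty is ensuring uniform control of the oscillatory sums, which requires bounds on partial Dirichlet kernel sums uniform in the phase frequency.
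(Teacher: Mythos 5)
Your overall strategy coincides with the paper's up to the halfway point: the paper also applies the Liouville transformation to pass to Liouville normal form (using exactly the hypothesis $p,w\in C^2$ to guarantee a continuous potential), and then derives, via the modified Pr\"ufer substitution following Birkhoff--Rota, the asymptotics $c\sqrt{\lambda_n}=(n+\sigma)\pi+O(1/n)$ and $u_n(t)=\sqrt{2/c}\,\cos\bigl(\sqrt{\lambda_n}t-\vartheta\bigr)+O(1/n)$ that you propose to import from Levitan--Sargsjan (Propositions 2.1 and 2.2). Where you genuinely diverge is in how the resulting trigonometric series is evaluated: the paper reduces to four explicit base cases (one cited from earlier work, one proved as Proposition 2.3 via a moment argument combined with Arzel\`a--Ascoli, the remaining two deduced by product-to-sum identities), whereas you expand $\cos^2$ directly and kill the oscillatory part by Abel summation. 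Your route is more elementary and self-contained; the paper's moment/Arzel\`a--Ascoli machinery is a deliberate choice, since it is the template reused for the orthogonal-polynomial results later on, but for this theorem alone your summation-by-parts argument does the job at fixed interior $x$ (the bound degenerates like $1/|\sin(\pi t(x)/L)|$ near the endpoints, which is harmless for a pointwise limit).

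The one step that does not survive as written is the eigenvalue asymptotics $\sqrt{\lambda_n}=n\pi/L+O(1)$. An $O(1)$ error is too coarse for everything that follows: the Abel summation bound needs the partial sums $\sum_{n}\cos\bigl(2\sqrt{\lambda_n}\,t(x)-2\vartheta\bigr)$ to be uniformly bounded, which requires the phases to track an arithmetic progression up to a summable perturbation, i.e. $\sqrt{\lambda_n}=(n+\sigma)\pi/L+O(1/n)$ with $\sigma\in\{0,\tfrac12,1\}$; and the endpoint statements are even more sensitive, since $\cos^2\bigl(\sqrt{\lambda_n}L-\vartheta\bigr)$ converges to $\cos^2(\sigma\pi-\vartheta)$ only once both $\sigma$ and the $O(1/n)$ remainder are pinned down. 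Note also that $\sigma$ is determined by the boundary conditions at \emph{both} endpoints (it equals $0$, $\tfrac12$, $\tfrac12$, $1$ in the four cases), so it is not recoverable from the left-hand phase $\vartheta$ alone; your reflection $t\mapsto L-t$ sidesteps this for the value at $x=b$, but the interior and off-diagonal oscillatory sums still need the refined expansion. The required precision is exactly what the paper's Proposition 2.1 supplies, so this is a repairable imprecision rather than a dead end, but as stated the $O(1)$ invalidates the Abel summation step and leaves the endpoint limits undetermined.
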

While~\cite[Theorem~1.2]{foster_habermann}, which states that, for all
$s,t\in[0,1]$,
\begin{equation}\label{eq:baseDD}
  \lim_{N\to\infty}N
  \sum_{n=N+1}^\infty\frac{2\sin((n+1)\pi s)\sin((n+1)\pi t)}{(n+1)^2\pi^2}=
  \begin{cases}
    \dfrac{1}{\pi^2} & \text{if } s=t\text{ and } t\in(0,1)\\[0.7em]
    0 & \text{otherwise}
  \end{cases}\;,
\end{equation}
could be considered as a special case of Theorem~\ref{thm:regularSL},
our proof actually depends on this result. Together with
Proposition~\ref{propn:baseDN}, it forms one of the base cases which
the problem reduces to after applying
the Liouville transformation as well as the known asymptotic formulae, and
which needs to be proven separately.

As a consequence of Proposition~\ref{propn:baseDN}, we further obtain
the fluctuations for the Karhunen--Lo{\`e}ve expansion of Brownian
motion, just as~\cite[Theorem~1.2]{foster_habermann} allowed us to
determine the
fluctuations for the Karhunen--Lo{\`e}ve expansion of a Brownian
bridge.
\begin{thm}\label{thm:KLBM}
  Let $(B_t)_{t\in[0,1]}$ be a Brownian motion in $\R$ and, for
  $N\in\N_0$, let the fluctuation processes $(F_t^{N})_{t\in[0,1]}$ for the
  Karhunen--Lo{\`e}ve expansion be defined by, for $t\in[0,1]$,
  \begin{displaymath}
    F_t^{N}
    =\sqrt{N}\left(B_t-
      \sum_{n=0}^N\frac{2\sin\left(\left(n+\frac{1}{2}\right)\pi t\right)}
      {\left(n+\frac{1}{2}\right)\pi}
      \int_0^1\cos\left(\left(n+\frac{1}{2}\right)\pi r\right)\dd B_r\right)\;.
  \end{displaymath}
  The processes $(F_t^{N})_{t\in[0,1]}$
  converge in finite dimensional distributions as $N\to\infty$ to the
  collection $(F_t)_{t\in[0,1]}$ of independent Gaussian random
  variables with mean zero and variance
  \begin{displaymath}
    \E\left[\left(F_t\right)^2\right]=
    \begin{cases}
      \dfrac{1}{\pi^2} & \text{if }t\in(0,1)\\[0.7em]
      \dfrac{2}{\pi^2} & \text{if }t=1\\[0.7em]
      0 & \text{if } t=0
    \end{cases}\;,
  \end{displaymath}
  where $\E$ denotes expectation with respect to Wiener measure.
\end{thm}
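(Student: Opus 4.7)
The plan is to identify the series in $F_t^{N}$ as the tail of the Karhunen--Lo{\`e}ve expansion of Brownian motion associated with the regular Sturm--Liouville problem $-\phi'' = \lambda\phi$ on $[0,1]$ with boundary conditions $\phi(0)=0$ and $\phi'(1)=0$. Its complete orthogonal eigenfunctions are $\phi_n(t)=\sin\!\left(\left(n+\frac{1}{2}\right)\pi t\right)$ with eigenvalues $\lambda_n=\left(\left(n+\frac{1}{2}\right)\pi\right)^2$ and $\int_0^1(\phi_n(z))^2\dd z = \frac{1}{2}$, and its Green's function on $[0,1]\times[0,1]$ equals $\min(s,t)$, which is precisely the covariance of Brownian motion. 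First I would check, by It{\^o}'s isometry and a short trigonometric computation, that the Wiener integrals $\eta_n:=\int_0^1\cos\!\left(\left(n+\frac{1}{2}\right)\pi r\right)\dd B_r$ form an i.i.d.\ family of $N(0,1/2)$ random variables and that $\E[B_t\,\eta_n]=\sin\!\left(\left(n+\frac{1}{2}\right)\pi t\right)/\left(\left(n+\frac{1}{2}\right)\pi\right)$. Projecting $B_t$ onto the closed linear span of $(\eta_n)_{n\in\N_0}$ then yields the $L^2(\Omega)$-convergent expansion $B_t=\sum_{n=0}^\infty \frac{2\sin\left(\left(n+\frac{1}{2}\right)\pi t\right)}{\left(n+\frac{1}{2}\right)\pi}\,\eta_n$, so that the series in the statement is indeed a truncated KL expansion.

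With this representation in hand, $F_t^{N}$ is a centred Gaussian random variable, and any finite vector $(F_{t_1}^{N},\ldots,F_{t_k}^{N})$ is jointly centred Gaussian. Convergence in finite-dimensional distributions therefore reduces to convergence of the covariance matrix. Exploiting orthogonality of $(\eta_n)$ with variance $1/2$, one computes
\begin{displaymath}
\E\!\left[F_s^{N}F_t^{N}\right] = N\sum_{n=N+1}^\infty \frac{2\sin\!\left(\left(n+\frac{1}{2}\right)\pi s\right)\sin\!\left(\left(n+\frac{1}{2}\right)\pi t\right)}{\left(\left(n+\frac{1}{2}\right)\pi\right)^2} = N\sum_{n=N+1}^\infty \frac{\phi_n(s)\phi_n(t)}{\lambda_n \int_0^1 (\phi_n(z))^2\dd z},
\end{displaymath}
which is exactly the tail sum treated by Proposition~\ref{propn:baseDN} for the Dirichlet--Neumann base case with $p\equiv w\equiv 1$, $q\equiv 0$, $\alpha_2=0$ and $\beta_2\neq 0$.

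Applying Proposition~\ref{propn:baseDN} then yields the limiting covariance: it equals $1/\pi^2$ whenever $s=t\in(0,1)$, equals $2/\pi^2$ at the Neumann endpoint $s=t=1$, vanishes at the Dirichlet endpoint $s=t=0$ (in agreement with the fact that $F_0^{N}\equiv 0$), and vanishes whenever $s\neq t$. Since the limiting covariance matrix of the jointly centred Gaussian vector $(F_{t_1}^{N},\ldots,F_{t_k}^{N})$ is diagonal with the stated entries, its weak limit is a vector of independent centred Gaussians with the variances claimed in the theorem, which gives the required convergence in finite-dimensional distributions. The only genuine analytic input is Proposition~\ref{propn:baseDN}; the remaining work is Gaussian linear algebra and the identification of the KL basis with the SL eigenbasis, so the main subtlety is simply handling the boundary values at $t=0$ and $t=1$ correctly, and this is already encoded in the boundary case of that proposition.
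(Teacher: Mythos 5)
Your proposal is correct and follows essentially the same route as the paper: the paper likewise observes that the $F_t^N$ are zero-mean Gaussian processes whose covariance, via the Green's function $G(s,t)=\min(s,t)$ of the Dirichlet--Neumann problem, is exactly the tail sum $N\sum_{n=N+1}^\infty 2\sin((n+\tfrac12)\pi s)\sin((n+\tfrac12)\pi t)/((n+\tfrac12)^2\pi^2)$, and then applies Proposition~\ref{propn:baseDN} together with a characteristic-function argument to conclude convergence of finite-dimensional distributions. Your additional verification of the orthonormality of the Wiener integrals and of the $L^2$ expansion of $B_t$ just makes explicit what the paper leaves implicit.
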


After proving Theorem~\ref{thm:regularSL} and illustrating the result
by one example, we turn our attention to singular Sturm--Liouville
problems with a focus on classical orthogonal polynomial systems,
which arise as solutions to
\begin{displaymath}
  \frac{\db}{\db x}\left(P(x)\frac{\db Y(x)}{\db x}\right)
  =-\lambda W(x) Y(x)
\end{displaymath}
subject to suitable singular boundary conditions.
The Hermite polynomial $H_n\colon\R\to\R$ of degree $n\in\N_0$
satisfies the Sturm--Liouville differential equation
\begin{displaymath}
  \left(\e^{-x^2}H_n'(x)\right)'=-2n\e^{-x^2}H_n(x)\;.
\end{displaymath}
The asymptotic error in approximating the associated Green's function
by truncating its bilinear expansion is given by the following result.
\begin{thm}\label{thm:hermite}
  We have, for $x\in\R$,
  \begin{displaymath}
    \lim_{N\to\infty}\sqrt{N}\sum_{n=N+1}^\infty
    \frac{\left(H_n(x)\right)^2}{2n\,2^nn!\sqrt{\pi}}
    =\frac{\exp\left(x^2\right)}{\sqrt{2}\pi}
  \end{displaymath}
  as well as, for $x,y\in\R$ with $x\not= y$ and for all $\gamma<1$,
  \begin{displaymath}
    \lim_{N\to\infty}N^\gamma\sum_{n=N+1}^\infty
    \frac{H_n(x)H_n(y)}{2n\,2^nn!\sqrt{\pi}}=0\;.
  \end{displaymath}
\end{thm}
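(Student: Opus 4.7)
The plan is to combine the classical Plancherel--Rotach asymptotic for the Hermite polynomials with a summation-by-parts argument that, off the diagonal, is driven by the Christoffel--Darboux identity. The reason for distinguishing the two regimes is that on the diagonal the squared Plancherel--Rotach asymptotic yields an oscillating $\sin^2$ contribution whose mean value $\tfrac{1}{2}$ produces the leading constant, whereas off the diagonal the cross term has no such positive average, and replacing the termwise bound by the bounded telescoping Christoffel--Darboux combination then gains an extra power of $N$.

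On the diagonal I would use Plancherel--Rotach in the form that, for $x$ fixed and uniformly on compact subsets of $\R$,
\[
  \exp(-x^2/2)\,H_n(x) = \frac{2^{n/2+1/4}(n!)^{1/2}}{(\pi n)^{1/4}}\,
  \sin\!\left(\sqrt{2n+1}\,x-\frac{n\pi}{2}+\frac{\pi}{4}\right)
  + O\!\left(\frac{2^{n/2}(n!)^{1/2}}{n^{3/4}}\right).
\]
Squaring, dividing by $2^n n!\sqrt{\pi}$, and using $\sin^2\theta = \tfrac{1}{2}-\tfrac{1}{2}\cos(2\theta)$ gives
\[
  \frac{(H_n(x))^2}{2n\cdot 2^n n!\sqrt{\pi}}
  = \frac{\exp(x^2)}{2\sqrt{2}\,\pi\,n^{3/2}}
    - \frac{(-1)^n\exp(x^2)\cos(2\sqrt{2n+1}\,x)}{2\sqrt{2}\,\pi\,n^{3/2}}
    + O(n^{-2}),
\]
where the phase $n\pi/2$ has been absorbed into the sign $(-1)^n$. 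The non-oscillatory term contributes, via the integral estimate $\sum_{n>N}n^{-3/2} = 2/\sqrt{N}+O(N^{-3/2})$, precisely $\exp(x^2)/(\sqrt{2}\,\pi\sqrt{N})+O(N^{-3/2})$, producing the stated constant $\exp(x^2)/(\sqrt{2}\,\pi)$ after multiplication by $\sqrt{N}$. The oscillating piece has the form $\sum_{n>N}(-1)^n g(n)/n^{3/2}$ with $g(n)=\cos(2\sqrt{2n+1}\,x)$ satisfying $|g|\leq 1$ and $|g(n+1)-g(n)|=O(n^{-1/2})$; Abel summation against the bounded partial sums of $(-1)^n$ then gives a tail of size $O(N^{-1})$, which is $o(N^{-1/2})$. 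The uniform error $O(n^{-2})$ summed from $n>N$ is likewise $O(N^{-1})$.

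Away from the diagonal, applying Plancherel--Rotach termwise to the product $H_n(x)H_n(y)$ only yields a summand of order $n^{-3/2}$, bounding the tail by $O(N^{-1/2})$ and hence covering merely $\gamma<\tfrac{1}{2}$. To reach every $\gamma<1$ I would invoke the Christoffel--Darboux identity for Hermite polynomials,
\[
  \sum_{n=0}^{M}\frac{H_n(x)H_n(y)}{2^n n!\sqrt{\pi}}
  = \frac{H_{M+1}(x)H_M(y)-H_M(x)H_{M+1}(y)}{2^{M+1}M!\sqrt{\pi}\,(x-y)}.
\]
Plancherel--Rotach applied to the right hand side shows that these partial sums $S_M$ are uniformly bounded in $M$ for each fixed pair $x\neq y$: the factorial and exponential prefactors cancel, leaving a quantity of order $(M/(M+1))^{1/4}$ times an oscillating bracket controlled by $|x-y|^{-1}\exp((x^2+y^2)/2)/\pi$. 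Setting $a_n = H_n(x)H_n(y)/(2^n n!\sqrt{\pi})$ and applying summation by parts then yields
\[
  \sum_{n=N+1}^{\infty}\frac{a_n}{2n}
  = -\frac{S_N}{2(N+1)} + \sum_{n=N+1}^{\infty}\frac{S_n}{2n(n+1)}
  = O(N^{-1}),
\]
so multiplication by $N^{\gamma}$ with $\gamma<1$ produces a quantity tending to zero.

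The main obstacle I anticipate is tracking the phases in the Plancherel--Rotach formula precisely enough to isolate the $\tfrac{1}{2}$-average from the oscillating remainder on the diagonal, and confirming that the Christoffel--Darboux numerator, divided by $2^{M+1}M!\sqrt{\pi}(x-y)$, stays genuinely bounded uniformly in $M$ (rather than merely $o(M)$) for each fixed $x\neq y$. Once these two asymptotic inputs are in place, the remainder of the argument is standard summation-by-parts combined with integral comparisons.
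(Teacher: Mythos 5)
Your proposal is correct, but it reaches the on-diagonal limit by a genuinely different route than the paper, while the off-diagonal argument coincides with the paper's in substance. For $x\neq y$ the paper proves a Christoffel--Darboux type formula (Proposition~\ref{propn:CDT}) from the three-term recurrence; specialised to Hermite polynomials, with $K_n/(K_{n+1}M_n)=1/(2^{n+1}n!\sqrt{\pi})$ and $1/\lambda_n-1/\lambda_{n+1}=1/(2n(n+1))$, it is term-for-term the identity you obtain by Abel summation of the classical Christoffel--Darboux kernel against $1/(2n)$, and the required uniform bound on $D_{M+1}(x,y)/(2^{M+1}M!\sqrt{\pi})$ is exactly the paper's estimate~(\ref{eq:H_diffbound}), obtained from~(\ref{eq:H_bound}) and Stirling's formula; so both arguments yield the tail bound $O(N^{-1})$ and hence all $\gamma<1$. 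On the diagonal, however, the paper does not expand $(H_n(x))^2$ asymptotically: it establishes local equicontinuity (Lemmas~\ref{lem:H_unifbound1} and~\ref{lem:H_unifbound2}, the latter via a telescoping rearrangement of the three-term recurrence), invokes the Arzel\`a--Ascoli theorem, and then identifies the continuous limit through the moment recurrences of Propositions~\ref{propn:moments} and~\ref{propn:conv_mom}, pinning down the initial moment with the exact identity $\int_{-\infty}^{\infty}\e^{-2x^2}(H_n(x))^2\dd x=2^{n-\frac{1}{2}}\Gamma(n+\frac{1}{2})$ of Lemma~\ref{lem:H_initial}. You instead square the pointwise asymptotic formula, extract the mean value $\tfrac12$ of $\sin^2$, and kill the oscillatory remainder $\sum(-1)^n g(n)n^{-3/2}$ by Abel summation using $|g(n+1)-g(n)|=O(n^{-1/2})$; this is sound (the relative error $O(n^{-1/2})$ in~(\ref{eq:H_asymp}) contributes $O(n^{-2})$ per term, hence $o(N^{-1/2})$ in the tail), and it is essentially the ``workaround'' the paper itself acknowledges in the remark following the proof as an alternative. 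Your route is shorter and avoids the moment machinery; the paper's route buys a template that transfers to settings where the full oscillatory asymptotics are harder to exploit, and it produces the integral identities of Lemma~\ref{lem:H_initial} as a by-product. One small slip that does not affect anything: your Plancherel--Rotach phase $+\pi/4$ should be $+\pi/2$ (equivalently a cosine, as in~(\ref{eq:H_asymp})), but since the diagonal only uses $\sin^2\theta=\tfrac12-\tfrac12\cos(2\theta)$ and the off-diagonal only uses the amplitude, the constant is immaterial.
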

For $\alpha\in\R$ fixed with $\alpha>-1$, the
associated Laguerre polynomial $L_n^{(\alpha)}\colon [0,\infty)\to\R$
of degree $n\in\N_0$ solves the Sturm--Liouville differential equation
\begin{displaymath}
  \frac{\db}{\db x}
  \left(x^{\alpha+1}\e^{-x}\frac{\db L_n^{(\alpha)}(x)}{\db x}\right)
  =-n x^\alpha \e^{-x}L_n^{(\alpha)}(x)\;.
\end{displaymath}
The special case $\alpha=0$ gives rise to the Laguerre polynomials.
As for the Hermite polynomials in Theorem~\ref{thm:hermite}, and unlike in
Theorem~\ref{thm:regularSL}, we need to rescale the error in the
approximations for the Green's function by $\sqrt{N}$ to obtain a
non-trivial limit on the diagonal. This is essentially a consequence
of both Hermite polynomials and associated Laguerre polynomials having
eigenvalues satisfying $\lambda_n=O(n)$ as $n\to\infty$ as opposed to
$\lambda_n=O(n^2)$ occurring in all other cases considered.
\begin{thm}\label{thm:laguerre}
  For $\alpha\in\R$ fixed with $\alpha>-1$,
  we have, for $x\in(0,\infty)$,
  \begin{displaymath}
    \lim_{N\to\infty}\sqrt{N}\sum_{n=N+1}^\infty
    \frac{\Gamma(n)\left(L_n^{(\alpha)}(x)\right)^2}
    {\Gamma(n+\alpha+1)}
    =\frac{x^{-\alpha-\frac{1}{2}} \e^x}{\pi}
  \end{displaymath}
  and, for $x,y\in(0,\infty)$ with $x\not= y$ and for all $\gamma<\frac{1}{2}$,
  \begin{displaymath}
    \lim_{N\to\infty} N^\gamma\sum_{n=N+1}^\infty
    \frac{\Gamma(n)L_n^{(\alpha)}(x)L_n^{(\alpha)}(y)}
    {\Gamma(n+\alpha+1)}
    =0\;.
  \end{displaymath}
\end{thm}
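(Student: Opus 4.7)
The plan is to combine the classical Szegő asymptotic for the associated Laguerre polynomials,
\begin{displaymath}
  L_n^{(\alpha)}(x) = \frac{\e^{x/2}}{\sqrt{\pi}}\, n^{\alpha/2-1/4} x^{-\alpha/2-1/4} \cos\theta_n(x) + O\bigl(n^{\alpha/2-3/4}\bigr),
\end{displaymath}
valid uniformly on compact subsets of $(0,\infty)$, with phase $\theta_n(x) = 2\sqrt{nx} - \alpha\pi/2 - \pi/4$, together with Stirling's formula $\Gamma(n)/\Gamma(n+\alpha+1) = n^{-\alpha-1}(1+O(1/n))$. Multiplying, the summand of interest becomes
\begin{displaymath}
  \frac{\Gamma(n)L_n^{(\alpha)}(x)L_n^{(\alpha)}(y)}{\Gamma(n+\alpha+1)} = \frac{A(x,y)\cos\theta_n(x)\cos\theta_n(y)}{n^{3/2}} + O\bigl(n^{-2}\bigr),
\end{displaymath}
where $A(x,y) = \e^{(x+y)/2}(xy)^{-\alpha/2-1/4}/\pi$. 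The exponent $3/2$, rather than the $2$ seen in Theorem~\ref{thm:regularSL}, comes from the eigenvalues $\lambda_n = n$ growing only linearly, and already signals that $\sqrt{N}$ is the correct rescaling.

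For the diagonal case, I would write $\cos^2\theta_n(x) = \frac{1}{2}\bigl(1 + \cos 2\theta_n(x)\bigr)$, so that the non-oscillatory half contributes
\begin{displaymath}
  \sqrt{N}\cdot\frac{A(x,x)}{2}\sum_{n=N+1}^\infty \frac{1}{n^{3/2}} \longrightarrow \frac{A(x,x)}{2}\cdot 2 = \frac{x^{-\alpha-1/2}\e^x}{\pi},
\end{displaymath}
by the elementary tail estimate $\sum_{n>N}n^{-3/2}\sim 2/\sqrt{N}$. The oscillatory half is $\sum_{n>N}\cos(4\sqrt{nx}+\phi)/n^{3/2}$, which I would estimate by comparing with the integral $\int_N^\infty\cos(4\sqrt{tx}+\phi)t^{-3/2}\dd t$, then substituting $u=\sqrt{t}$ and integrating by parts once, yielding an $O(1/N)$ bound, which is $o(1/\sqrt{N})$. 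The pointwise remainder $O(n^{-2})$ sums to $O(1/N)$ and is absorbed in the same way.

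For the off-diagonal case $x\neq y$, I would apply the product-to-sum identity
\begin{displaymath}
  \cos\theta_n(x)\cos\theta_n(y) = \tfrac{1}{2}\cos\bigl(\theta_n(x)-\theta_n(y)\bigr) + \tfrac{1}{2}\cos\bigl(\theta_n(x)+\theta_n(y)\bigr).
\end{displaymath}
Both phases are affine in $\sqrt{n}$ with nonzero coefficients $2(\sqrt{x}\pm\sqrt{y})$: the coefficient of the difference is nonzero precisely because $x\neq y$, and that of the sum is strictly positive. The same oscillatory-integral bound gives each tail sum as $O(1/N)$, so multiplying by $N^\gamma$ with $\gamma<\frac{1}{2}$ (in fact for any $\gamma<1$) kills the expression.

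The main obstacle I foresee is handling the oscillatory sums $\sum_{n>N}\cos(c\sqrt{n}+\phi)/n^{3/2}$ sharply enough: a crude absolute estimate gives only $O(1)$, whereas one needs $o(1/\sqrt{N})$ on the diagonal. Moreover, in order to avoid chasing remainders through oscillatory cancellations, I would appeal, for the off-diagonal case, to the Christoffel--Darboux type identity advertised in the abstract, which rewrites the partial sum $\sum_{n=0}^N L_n^{(\alpha)}(x)L_n^{(\alpha)}(y)/(n\,\|L_n^{(\alpha)}\|^2)$ as a boundary expression where the leading-order Szegő asymptotic can be plugged in directly.
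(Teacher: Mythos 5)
Your proposal is correct in substance but follows a genuinely different route from the paper, on both halves of the statement. For the diagonal, the paper never estimates the oscillatory sums at all: it establishes two locally uniform bounds (Fej\'er's formula combined with a telescoping rearrangement of $\sum_n \Gamma(n)L_n^{(\alpha)}(x)\frac{\db}{\db x}L_n^{(\alpha)}(x)/\Gamma(n+\alpha+1)$), invokes Arzel\`a--Ascoli to obtain a continuous limit, and identifies that limit by a moment argument --- the recurrence of Proposition~\ref{propn:moments} matched against Proposition~\ref{propn:conv_mom}, with the initial condition supplied by the exact evaluation of $\int_0^\infty x^{2\alpha+1}\e^{-2x}\bigl(L_n^{(\alpha)}(x)\bigr)^2\dd x$ proved by induction in Lemma~\ref{lem:AL_initial}. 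You instead substitute Fej\'er's formula directly, split $\cos^2\theta_n$ into its mean and oscillatory parts, and estimate $\sum_{n>N}\cos\bigl(c\sqrt{n}+\phi\bigr)n^{-3/2}=O(N^{-1})$; that estimate is the one step requiring care, since the phase increments $c(\sqrt{n+1}-\sqrt{n})$ tend to zero, so you must first pass from the sum to the integral by Euler--Maclaurin or Abel summation, controlled by $\int_N^\infty|f'(t)|\dd t=O(N^{-1})$ for $f(t)=\cos(c\sqrt{t}+\phi)t^{-3/2}$, before substituting $u=\sqrt{t}$ and integrating by parts. With that in place your constant is right ($A(x,x)/2$ times $\sum_{n>N}n^{-3/2}\sim 2N^{-1/2}$ gives exactly $x^{-\alpha-1/2}\e^{x}/\pi$), and the cross terms between the main term and the $O(n^{\alpha/2-3/4})$ error in Fej\'er's formula are $O(n^{-2})$ after normalisation, hence harmless. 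Off the diagonal, the paper uses the Christoffel--Darboux type formula of Proposition~\ref{propn:CDT} with only absolute-value bounds from Fej\'er, which is precisely what limits it to $O(N^{-1/2})$ and hence to $\gamma<\tfrac12$; your product-to-sum decomposition with the same oscillatory estimate gives $O(N^{-1})$, i.e.\ the conclusion for every $\gamma<1$, which --- if written out rigorously --- is strictly stronger than the stated theorem and would settle the question the paper explicitly leaves open at the end of Section~\ref{sec:laguerre}. What the paper's approach buys in exchange is uniformity of method: the Arzel\`a--Ascoli-plus-moments scheme and the Christoffel--Darboux formula are reused essentially verbatim for the Hermite and Jacobi cases and require no cancellation estimates for slowly oscillating sums.
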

For the associated Laguerre polynomials, we cannot prove the
off-diagonal convergence
up to the scaling exponent appearing on the diagonal. The result
stated is the best we can achieve with our method, discussed later, and
using known asymptotic formulae for the
polynomials as $n\to\infty$.

Further notice that both for Hermite polynomials and for associated
Laguerre polynomials the limit function formed on the diagonal is
still proportional to $1/\sqrt{PW}$, as in
Theorem~\ref{thm:regularSL}, but with different constants of
proportionality.

For $\alpha,\beta\in\R$ fixed with $\alpha,\beta>-1$, the Jacobi
polynomial $P_n^{(\alpha,\beta)}\colon [-1,1]\to\R$ of degree
$n\in\N_0$ satisfies the Sturm--Liouville differential equation
\begin{displaymath}
  \frac{\db}{\db x}\left((1-x)^{\alpha+1}(1+x)^{\beta+1}
    \frac{\db P_n^{(\alpha,\beta)}(x)}{\db x}\right)
  =-n(n+\alpha+\beta+1) (1-x)^\alpha(1+x)^\beta P_n^{(\alpha,\beta)}(x)\;,
\end{displaymath}
and the asymptotic error in the eigenfunction expansion for the
associated Green's function is given in the theorem below. We see
that the limit function formed on the diagonal is exactly the same one
which appears in Theorem~\ref{thm:regularSL} for regular Sturm--Liouville
problems as
\begin{displaymath}
  \int_{-1}^1\sqrt{\frac{W(z)}{P(z)}}\dd z
  =\int_{-1}^1\frac{1}{\sqrt{1-z^2}}\dd z=\pi\;.
\end{displaymath}
The analysis we employ for the Jacobi polynomials in fact allows us to
determine the asymptotic error even for those families of Jacobi
polynomials which do not arise as classical orthogonal polynomial
systems
but, with $\alpha,\beta\in\R$, are defined by, for $n\in\N_0$ and
$x\in[-1,1]$,
\begin{displaymath}
  P_n^{(\alpha,\beta)}(x)
  =\frac{1}{n!}\sum_{k=0}^n\binom{n}{k}
  \left(n+\alpha+\beta+1\right)_k\left(\alpha+k+1\right)_{n-k}
  \left(\frac{x-1}{2}\right)^k\;.
\end{displaymath}
\begin{thm}\label{thm:jacobi}
  For $\alpha,\beta\in\R$ fixed,
  we have, for $x\in(-1,1)$,
  \begin{displaymath}
    \lim_{N\to\infty}N\sum_{n=N+1}^\infty
    \frac{n!(2n+\alpha+\beta+1)\Gamma(n+\alpha+\beta+1)}
    {2^{\alpha+\beta+1}\Gamma(n+\alpha+1)\Gamma(n+\beta+1)}
      \frac{\left(P_n^{(\alpha,\beta)}(x)\right)^2}
      {n(n+\alpha+\beta+1)}
    =\dfrac{(1-x)^{-\alpha-\frac{1}{2}}(1+x)^{-\beta-\frac{1}{2}}}{\pi}
  \end{displaymath}
  and, for $x,y\in(-1,1)$ with $x\not= y$ and for all $\gamma<2$,
  \begin{displaymath}
    \lim_{N\to\infty}N^\gamma\sum_{n=N+1}^\infty
    \frac{n!(2n+\alpha+\beta+1)\Gamma(n+\alpha+\beta+1)}
    {2^{\alpha+\beta+1}\Gamma(n+\alpha+1)\Gamma(n+\beta+1)}
      \frac{P_n^{(\alpha,\beta)}(x)P_n^{(\alpha,\beta)}(y)}
      {n(n+\alpha+\beta+1)}=0\;.
  \end{displaymath}
\end{thm}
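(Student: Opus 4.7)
My plan for Theorem~\ref{thm:jacobi} is to combine Darboux's pointwise asymptotic formula for the Jacobi polynomials with a Christoffel--Darboux type identity, which supplies improved off-diagonal control, and to transfer this control to the tail of the Green's function expansion via Abel summation. Setting $x=\cos\theta$ for $\theta\in(0,\pi)$, Darboux's formula gives, uniformly on compact subsets of~$(0,\pi)$,
\begin{displaymath}
  P_n^{(\alpha,\beta)}(\cos\theta)
  =\frac{\cos\left(\nu_n\theta+\gamma_\alpha\right)}
  {\sqrt{\pi n}\,\sin(\theta/2)^{\alpha+\frac{1}{2}}\cos(\theta/2)^{\beta+\frac{1}{2}}}
  +O(n^{-\frac{3}{2}}),
\end{displaymath}
with $\nu_n=n+\frac{\alpha+\beta+1}{2}$ and $\gamma_\alpha=-\left(\alpha+\frac{1}{2}\right)\frac{\pi}{2}$, while Stirling's formula reduces the weight coefficient
\begin{displaymath}
  c_n=\frac{n!(2n+\alpha+\beta+1)\Gamma(n+\alpha+\beta+1)}
  {2^{\alpha+\beta+1}\Gamma(n+\alpha+1)\Gamma(n+\beta+1)}
\end{displaymath}
to $c_n=2n/2^{\alpha+\beta+1}+O(1)$. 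Combined with $\lambda_n=n(n+\alpha+\beta+1)\sim n^2$, the diagonal summand at $x=\cos\theta$ is, to leading order,
\begin{displaymath}
  \frac{c_n\left(P_n^{(\alpha,\beta)}(\cos\theta)\right)^2}{\lambda_n}
  \sim\frac{\cos^2\left(\nu_n\theta+\gamma_\alpha\right)}
  {2^{\alpha+\beta}\pi n^2\sin(\theta/2)^{2\alpha+1}\cos(\theta/2)^{2\beta+1}}.
\end{displaymath}

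For the diagonal limit I would split $\cos^2=\frac{1}{2}+\frac{1}{2}\cos(2(\nu_n\theta+\gamma_\alpha))$. The constant half supplies the main contribution: using $N\sum_{n\ge N+1}n^{-2}\to 1$ together with the half-angle identities
\begin{displaymath}
  2^{\alpha+\beta}\sin(\theta/2)^{2\alpha+1}\cos(\theta/2)^{2\beta+1}
  =\tfrac{1}{2}(1-x)^{\alpha+\frac{1}{2}}(1+x)^{\beta+\frac{1}{2}}
\end{displaymath}
yields exactly $(1-x)^{-\alpha-\frac{1}{2}}(1+x)^{-\beta-\frac{1}{2}}/\pi$. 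The oscillating half $\sum_{n\ge N+1}\cos(2\nu_n\theta+2\gamma_\alpha)/n^2$ is $O(1/N)$ by a Dirichlet summation-by-parts bound, since $\e^{2\im\theta}\not=1$, so it vanishes after rescaling by~$N$. The $O(n^{-3/2})$ remainder in Darboux's formula and the $O(1/n)$ error in $c_n$ contribute terms summing to $o(1/N)$ and hence disappear in the limit.

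For the off-diagonal statement a termwise substitution of Darboux's formula only yields $\gamma<1$, whereas the target is $\gamma<2$; the improvement comes from the Christoffel--Darboux identity flagged in the abstract. Let $K_N(x,y)=\sum_{n=0}^N c_nP_n^{(\alpha,\beta)}(x)P_n^{(\alpha,\beta)}(y)$. The identity expresses $K_N(x,y)$ for $x\not=y$ as $(x-y)^{-1}$ times a bilinear form in $\{P_N^{(\alpha,\beta)},P_{N+1}^{(\alpha,\beta)}\}$ evaluated at~$x$ and~$y$; inserting Darboux's formula and using that $x-y$ is bounded away from zero gives $K_N(x,y)=O(1)$. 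Writing $c_nP_n^{(\alpha,\beta)}(x)P_n^{(\alpha,\beta)}(y)=K_n(x,y)-K_{n-1}(x,y)$ and Abel-summing,
\begin{displaymath}
  \sum_{n=N+1}^\infty\frac{c_nP_n^{(\alpha,\beta)}(x)P_n^{(\alpha,\beta)}(y)}{\lambda_n}
  =-\frac{K_N(x,y)}{\lambda_{N+1}}
  +\sum_{n=N+1}^\infty K_n(x,y)\left(\frac{1}{\lambda_n}-\frac{1}{\lambda_{n+1}}\right),
\end{displaymath}
and since $1/\lambda_n-1/\lambda_{n+1}=O(n^{-3})$ the tail is $O(N^{-2})$, which multiplied by $N^\gamma$ with $\gamma<2$ tends to zero.

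The main obstacle is establishing the Christoffel--Darboux identity in sufficient generality, since Theorem~\ref{thm:jacobi} allows arbitrary $\alpha,\beta\in\R$ for which the polynomials $P_n^{(\alpha,\beta)}$ need not form an orthogonal system on~$[-1,1]$. The classical derivation from orthogonality is unavailable, so one must proceed purely algebraically from the three-term recurrence for $P_n^{(\alpha,\beta)}$, carefully handling the coefficients written in terms of Gamma functions that may degenerate at exceptional values of $\alpha,\beta$; the paper indicates such an identity is obtained uniformly for all classical orthogonal polynomial systems. Secondary care is needed to ensure the Dirichlet summation-by-parts bound on the diagonal holds uniformly in the Darboux error term, but once these pieces are in place the remaining argument reduces to routine asymptotic bookkeeping.
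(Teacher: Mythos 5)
Your proposal is correct in substance, and on the diagonal it takes a genuinely different route from the paper. The paper does not split $\cos^2$ into a constant plus an oscillating part; instead it isolates the purely trigonometric statement as Proposition~\ref{propn:generalbc} and proves it by the Arzel\`{a}--Ascoli theorem (requiring the two local uniform bounds of Lemmas~\ref{lem:J_unifbound1} and~\ref{lem:J_unifbound2}, the second of which needs the telescoping identity of Lemma~\ref{lem:J_diff}) followed by a moment computation in the variable $\theta$. Your direct argument --- Darboux plus $\cos^2=\frac{1}{2}+\frac{1}{2}\cos(2\cdot)$ plus summation by parts against the bounded partial sums of $\sum\e^{2\im n\theta}$ --- bypasses all of that machinery for the pointwise limit at a fixed $x$, at the cost of losing the locally uniform convergence that the paper's route delivers as a byproduct. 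One bookkeeping slip: you assert the oscillating tail $\sum_{n>N}\cos(2\nu_n\theta+2\gamma_\alpha)/n^2$ is $O(1/N)$, which after multiplying by $N$ would only give $O(1)$; the Dirichlet/Abel bound you invoke in fact gives $O(N^{-2}/|\sin\theta|)$, which is what you need, so the conclusion stands once the exponent is corrected. Off the diagonal your argument is essentially identical to the paper's: the Abel summation of the Christoffel--Darboux kernel against $1/\lambda_n$ is precisely the content of Proposition~\ref{propn:CDT}, and the bounds $K_n/(K_{n+1}M_n)\cdot D_{n+1}=O(1)$ and $1/\lambda_n-1/\lambda_{n+1}=O(n^{-3})$ are the ones the paper uses. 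The obstacle you flag for general $\alpha,\beta\in\R$ is real but mild, and the paper resolves it exactly as you suggest: the three-term recurrence \cite[4.5.1]{szego} holds for all real parameters, and the telescoped identity is started at an index $N$ large enough that all coefficients $M_n$, $\lambda_n$ and the Gamma-quotients are well-defined and non-zero.
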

We show later that~\cite[Theorem~1.5]{semicircle} which
quantifies 
an integrated version of the completeness and orthogonality property
for Legendre polynomials can be deduced from
Theorem~\ref{thm:jacobi}.
Moreover, we have the following
convergence results for the Legendre polynomials $\{P_n:n\in\N_0\}$,
the Chebyshev polynomials of the first kind $\{T_n:n\in\N_0\}$ and the
Chebyshev polynomials of the second kind $\{U_n:n\in\N_0\}$, which all
are scalar multiplies of Jacobi polynomials.
\begin{cor}\label{cor:jacobi}
  We have, for $x,y\in(-1,1)$,
  \begin{displaymath}
    \lim_{N\to\infty}N\sum_{n=N+1}^\infty
    \frac{(2n+1)P_n(x)P_n(y)}{2n(n+1)}=
    \begin{cases}
      \dfrac{1}{\pi\sqrt{1-x^2}} & \text{if }x=y\\[0.7em]
      0 &\text{if }x\not=y
    \end{cases}
  \end{displaymath}
  as well as
  \begin{displaymath}
    \lim_{N\to\infty}N\sum_{n=N+1}^\infty
    \frac{T_n(x)T_n(y)}{n^2}=
    \begin{cases}
      \frac{1}{2} & \text{if }x=y\\
      0 &\text{if }x\not=y
    \end{cases}
  \end{displaymath}
  and
  \begin{displaymath}
    \lim_{N\to\infty}N\sum_{n=N+1}^\infty
    \frac{U_n(x)U_n(y)}{n(n+2)}=
    \begin{cases}
      \dfrac{1}{2(1-x^2)} & \text{if }x=y\\[0.7em]
      0 &\text{if }x\not=y
    \end{cases}\;.
  \end{displaymath}
\end{cor}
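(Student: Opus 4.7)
\begin{proof2}[Proof proposal]
The plan is to deduce Corollary~\ref{cor:jacobi} directly from Theorem~\ref{thm:jacobi} by specialising the parameters $(\alpha,\beta)$ and using the classical identifications of Legendre, first-kind Chebyshev and second-kind Chebyshev polynomials as scalar multiples of particular Jacobi polynomials. Concretely, one substitutes $(\alpha,\beta)=(0,0),(-\tfrac12,-\tfrac12),(\tfrac12,\tfrac12)$ respectively, writes the special polynomial as $c_n\,P_n^{(\alpha,\beta)}$ with an explicit $c_n$ obtained from the standard normalisation (see, e.g., DLMF~18.7), and simplifies the prefactor appearing in Theorem~\ref{thm:jacobi} using the duplication formula $\Gamma(n+\tfrac12)=\sqrt{\pi}(2n)!/(4^n n!)$ and the functional equation of the Gamma function.

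First I would handle the Legendre case, which is the cleanest: since $P_n=P_n^{(0,0)}$, no rescaling is needed, and the prefactor in Theorem~\ref{thm:jacobi} simplifies immediately as $n!(2n+1)\Gamma(n+1)/(2\Gamma(n+1)^2)=(2n+1)/2$, while $n(n+\alpha+\beta+1)=n(n+1)$. The diagonal limit then reads $(1-x)^{-1/2}(1+x)^{-1/2}/\pi=1/(\pi\sqrt{1-x^2})$, as required. Next, for the first-kind Chebyshev polynomials with $\alpha=\beta=-\tfrac12$, one uses $T_n=\frac{n!}{(1/2)_n}P_n^{(-1/2,-1/2)}$. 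After inserting $(P_n^{(-1/2,-1/2)})^2=((1/2)_n/n!)^2T_n^2$ into Theorem~\ref{thm:jacobi} and collecting terms with $2^{\alpha+\beta+1}=1$, $\Gamma(n+\alpha+\beta+1)=\Gamma(n)$, $n(n+\alpha+\beta+1)=n^2$ and $(1/2)_n=\Gamma(n+1/2)/\sqrt{\pi}$, the $\Gamma(n+1/2)^2$ factors cancel and the summand collapses to $2T_n(x)T_n(y)/(\pi n^2)$. Since $(1-x)^{-\alpha-1/2}(1+x)^{-\beta-1/2}=1$, the right-hand side of Theorem~\ref{thm:jacobi} is $1/\pi$, so dividing by $2/\pi$ yields the claimed value $1/2$ on the diagonal.

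For the second-kind Chebyshev polynomials with $\alpha=\beta=\tfrac12$, the same recipe is applied with the identification $U_n=\frac{(n+1)!}{(3/2)_n}P_n^{(1/2,1/2)}$. The parameters give $2n+\alpha+\beta+1=2n+2$, $\Gamma(n+\alpha+\beta+1)=(n+1)!$, $2^{\alpha+\beta+1}=4$ and $n(n+\alpha+\beta+1)=n(n+2)$, and after using $(3/2)_n=\Gamma(n+3/2)/\Gamma(3/2)$ together with $\Gamma(3/2)=\sqrt{\pi}/2$ the $\Gamma(n+3/2)^2$ terms again cancel, leaving a summand proportional to $U_n(x)U_n(y)/(n(n+2))$ times a universal constant. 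Matching that constant with the diagonal limit $(1-x)^{-1}(1+x)^{-1}/\pi=1/(\pi(1-x^2))$ produces the prefactor $1/2$ stated in the corollary. The off-diagonal statements follow from the corresponding off-diagonal conclusion in Theorem~\ref{thm:jacobi}, valid for all $\gamma<2$ and in particular for $\gamma=1$.

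The only real obstacle is bookkeeping: one must be careful with the different conventional normalisations of $T_n$ and $U_n$ (noting that some references differ by factors of $2$ or $n+1$) and verify that each Pochhammer-to-Gamma conversion is applied consistently. There is no new analytic ingredient beyond Theorem~\ref{thm:jacobi} itself and elementary Gamma function identities.
\end{proof2}
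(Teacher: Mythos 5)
Your proposal is correct and follows essentially the same route as the paper: Corollary~\ref{cor:jacobi} is obtained as an immediate consequence of Theorem~\ref{thm:jacobi} by specialising to $(\alpha,\beta)=(0,0),(-\tfrac{1}{2},-\tfrac{1}{2}),(\tfrac{1}{2},\tfrac{1}{2})$ and using the identifications in~(\ref{eq:J_special}), after which the Gamma-function factors cancel exactly as you describe. Your normalisations $T_n=\frac{n!}{(1/2)_n}P_n^{(-1/2,-1/2)}$ and $U_n=\frac{(n+1)!}{(3/2)_n}P_n^{(1/2,1/2)}$ agree with the paper's, and the resulting constants $\tfrac{2}{\pi}$ in front of each summand match the paper's displayed identities.
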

When proving the convergence results for the classical orthogonal
polynomials, we use, in principle, the same method which was employed to
establish~\cite[Theorem~1.5]{semicircle} and we split the analysis into
an on-diagonal and an off-diagonal part.
The pointwise convergence on the diagonal
is deduced from a convergence of moments, and two local uniform bounds
which allow us to apply the Arzel\`{a}–Ascoli theorem. 
For the
convergence away from the diagonal, we find a Christoffel--Darboux
type formula, see Proposition~\ref{propn:CDT}, which together with
known asymptotic formulae for the polynomials yields the desired
result. By using the Christoffel--Darboux
type formula, we obtain a
better control away from the diagonal than a sole application of
the asymptotic formulae would give.

The main advantage compared to~\cite{semicircle} is that we have a
streamlined way to deal with the moment convergence.
Proposition~\ref{propn:moments} and Proposition~\ref{propn:conv_mom}
imply that for a family of classical orthogonal polynomials which is
subject to suitable boundary conditions the limit moments on the
diagonal satisfy the same recurrence relation as the moments of the desired
limit function. The one subtlety which appears to remain, and which
forces us to employ a workaround for the Jacobi polynomials, is
checking
that the limit moments satisfy the right initial condition. In doing
so, we establish a number of interesting identities for Hermite
polynomials and associated Laguerre polynomials.

We remark that the used method of moments, see
Billingsley~\cite[Theorem~30.2]{billingsley}, is applicable in our
analysis since each limit function encountered is determined by
its moments as a consequence of~\cite[Theorem~30.1]{billingsley}.

\subsection*{\bf The paper is organised as follows.}
After starting by reviewing the Liouville transformation for
Sturm--Liouville problems,
we give an account, in Section~\ref{sec:asym}, of the asymptotic
formulae for eigenvalues 
and eigenfunctions of systems in Liouville normal form.
We then prove Theorem~\ref{thm:regularSL} and
Proposition~\ref{propn:baseDN}, which implies Theorem~\ref{thm:KLBM},
in Section~\ref{sec:main},
and we illustrate Theorem~\ref{thm:regularSL} by one example given in
Section~\ref{sec:1example}. Our considerations for classical
orthogonal polynomials follow in Section~\ref{sec:COP}. We first
prove Proposition~\ref{propn:moments} and
Proposition~\ref{propn:conv_mom}, which allow us to streamline the
moment analysis on the diagonal, and we further derive the
Christoffel--Darboux type formula stated in
Proposition~\ref{propn:CDT}, applicable to any family of classical
orthogonal polynomials, which is used for the off-diagonal
analysis. We apply these results to prove
Theorem~\ref{thm:hermite} in Section~\ref{sec:hermite},
Theorem~\ref{thm:laguerre} in Section~\ref{sec:laguerre},
and finally, Theorem~\ref{thm:jacobi} in Section~\ref{sec:jacobi}. Our
proof of Theorem~\ref{thm:jacobi} relies on
Proposition~\ref{propn:generalbc}, which demonstrates that
Theorem~\ref{thm:regularSL} extends to cases beyond separated
homogeneous boundary conditions.
Throughout, we use the convention that $\N$ denotes the positive
integers and $\N_0$ the non-negative integers.

\section{Green's function of a regular Sturm--Liouville
  problem}
\label{sec:regular}

We prove Theorem~\ref{thm:regularSL} by applying the Liouville
transformation to reduce the analysis to systems in Liouville normal
form, for which the asymptotic error in the eigenfunction expansion for
the Green's function is
deduced from the asymptotics for the eigenfunctions as well as the
eigenvalues and from four base cases which we establish directly.

The reason for assuming $p,w\in C^2([a,b])$ in
Theorem~\ref{thm:regularSL} is that
a given Sturm--Liouville differential equation
\begin{displaymath}
  \frac{\db}{\db x}\left(p(x)\frac{\db\phi(x)}{\db x}\right) -q(x)\phi(x)
  =-\lambda w(x)\phi(x)
\end{displaymath}
is transformed by the Liouville transformation
\begin{displaymath}
  t=\int_a^x\sqrt{\frac{w(z)}{p(z)}}\dd z
  \quad\text{and}\quad
  u(t)=\sqrt[4]{p(x)w(x)}\phi(x)
\end{displaymath}
and with
\begin{displaymath}
  \tilde{q}(t)=\frac{q(x)}{w(x)}+
  \frac{1}{\sqrt[4]{p(x)w(x)}}\frac{\db^2}{\db t^2}
  \left(\sqrt[4]{p(x)w(x)}\right)
\end{displaymath}
into the Liouville normal form
\begin{displaymath}
  \frac{\db^2u(t)}{\db t^2}-\tilde{q}(t)u(t)
  =-\lambda u(t)
\end{displaymath}
on the finite interval $[0,c]$ where
\begin{displaymath}
  c=\int_a^b\sqrt{\frac{w(z)}{p(z)}}\dd z\;,
\end{displaymath}
see Birkhoff and Rota~\cite[Theorem~10.6]{birkhoff}.
In particular, the additional assumption $p,w\in C^2([a,b])$ together
with the standard assumptions of $p$ and $w$ being positive
and $q\in C([a,b])$ ensures that $\tilde{q}\in C([0,c])$.
According to~\cite[Corollary~10.1 and Corollary~10.2]{birkhoff}, the
Liouville transformation transforms the original regular
Sturm--Liouville problem with
separated homogeneous boundary conditions into another regular
Sturm--Liouville problem with
separated homogeneous boundary conditions which has the same
eigenvalues as the original system and where eigenfunctions are mapped to
eigenfunctions, except that
the new eigenfunctions are now orthogonal with respect to a constant
weight function.

Hence, the Liouville transformation allows us to prove
Theorem~\ref{thm:regularSL} by analysing Sturm--Liouville problems in
Liouville normal form subject to separated homogeneous boundary
conditions.
For these systems asymptotic formulae for the eigenfunctions and
eigenvalues are known as presented in the subsequent section.

\subsection{Asymptotics for eigenfunctions and eigenvalues}
\label{sec:asym}

In our discussion of the asymptotic formulae for
eigenfunctions and eigenvalues of
systems in Liouville normal form, we follow the analysis in
Birkhoff and Rota~\cite[Section 10.10 to Section 10.13]{birkhoff}
with explicitly completing
the extension to all possible separated homogeneous boundary
conditions. For an analysis of all cases needing consideration,
one may also consult Levitan
and Sargsjan~\cite[Section~1.2]{levitan}.

Fix $c\in\R$ with $c>0$. We consider a regular Sturm--Liouville
problem in Liouville normal
form on the interval $[0,c]$, for $q\in C([0,c])$,
\begin{displaymath}
  \frac{\db^2u(t)}{\db t^2}-q(t)u(t)
  =-\lambda u(t)
\end{displaymath}
subject to the separated homogeneous boundary conditions
\begin{align*}
  \alpha_1 u(0)+\alpha_2 u'(0)&=0\;,
  \qquad\alpha_1^2+\alpha_2^2>0\;,\\
  \beta_1 u(c)+\beta_2 u'(c)&=0\;,
  \qquad\beta_1^2+\beta_2^2>0\;,
\end{align*}
for constants $\alpha_1,\alpha_2,\beta_1,\beta_2\in\R$. By
Sturm--Liouville theory, there exists a family $\{u_n: n\in\N_0\}$
of real eigenfunctions that forms a complete orthonormal set and such
that the corresponding real eigenvalues $\{\lambda_n: n\in\N_0\}$
form a sequence strictly increasing to infinity. In particular, as we
are interested in asymptotic formulae for $u_n$ and $\lambda_n$ as
$n\to\infty$, we may and do assume throughout our analysis that the function
$Q\in C([0,c])$ defined by
\begin{displaymath}
  Q(t)=\lambda-q(t)
\end{displaymath}
is strictly positive. For a given solution $u$ to the Sturm--Liouville
problem with eigenvalue $\lambda$, the modified amplitude $R$ and the
modified phase $\theta$ are defined by
\begin{equation}\label{eq:prufersub}
  u=\frac{R}{\sqrt[4]{Q}}\sin(\theta)
  \quad\text{and}\quad
  u'=R\sqrt[4]{Q}\cos(\theta)\;,
\end{equation}
which is called the modified Pr\"ufer substitution. We see that the
modified amplitude, chosen to be non-negative, and the modified phase,
subject to shifts by $2\pi$,
are determined by
the equations
\begin{displaymath}
  \left(R(t)\right)^2=
  \sqrt{Q(t)}\left(u(t)\right)^2+
  \frac{1}{\sqrt{Q(t)}}\left(u'(t)\right)^2
\end{displaymath}
as well as
\begin{align*}
  \cot\left(\theta(t)\right)=
  \frac{1}{\sqrt{Q(t)}}\frac{u'(t)}{u(t)}
  \quad
  &\text{provided }u(t)\not=0
    \intertext{and}
  \tan\left(\theta(t)\right)=
  \sqrt{Q(t)}\frac{u(t)}{u'(t)}
  \quad
  &\text{provided }u'(t)\not=0\;.
\end{align*}
Differentiating these equations and using $u''=-Qu$, we obtain the
modified Pr\"ufer system
\begin{align*}
  \theta'(t)
  &=\sqrt{\lambda-q(t)}
  -\frac{q'(t)}{4(\lambda-q(t))}
    \sin\left(2\theta(t)\right)\;,\\
  \frac{R'(t)}{R(t)}
  &=
  \frac{q'(t)}{4(\lambda-q(t))}
  \cos\left(2\theta(t)\right)\;.
\end{align*}
By comparing solutions to the modified Pr\"ufer system with solutions
to
\begin{displaymath}
  \tilde{\theta}'(t)=\sqrt{\lambda}
  \quad\text{and}\quad
  \left(\log \tilde{R}(t)\right)'=0\;,
\end{displaymath}
and by exploiting the property that, for given initial values, solutions
to an ordinary differential equation depend continuously on the
ordinary differential equation, we deduce that, as $\lambda\to\infty$,
\begin{equation}\label{eq:thetaandR}
  \theta(t)=\theta(0)+\sqrt{\lambda}t
  +O\left(\frac{1}{\sqrt{\lambda}}\right)
  \quad\text{and}\quad
  R(t)=R(0)+O\left(\frac{1}{\lambda}\right)\;,
\end{equation}
see proof of~\cite[Theorem~10.7]{birkhoff} for details. For the
remaining analysis it is necessary to distinguish according to the
type of separated homogeneous boundary conditions imposed, where we
essentially
differentiate if the Dirichlet part or the Neumann part of the
boundary conditions dominates for large $\lambda$.
This gives rise to four different cases.
\begin{propn}\label{propn:evals}
  The real eigenvalues $\{\lambda_n:n\in\N_0\}$
  ordered in increasing order
  of a regular Sturm--Liouville problem in Liouville normal form
  satisfy, as $n\to\infty$,
  \begin{equation*}
    c\sqrt{\lambda_n}+O\left(\frac{1}{n}\right)=
    \begin{cases}
      n\pi & \text{if }\alpha_2,\beta_2\not= 0\\
      \left(n+\frac{1}{2}\right)\pi
      &\text{if }\alpha_2\not= 0\text{ and }\beta_2=0\\
      \left(n+\frac{1}{2}\right)\pi
      &\text{if }\alpha_2=0\text{ and }\beta_2\not=0\\
      \left(n+1\right)\pi & \text{if }\alpha_2=\beta_2=0
    \end{cases}\;.
  \end{equation*}
\end{propn}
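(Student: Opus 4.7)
The strategy is to use the modified Pr\"ufer substitution~(\ref{eq:prufersub}) to recast the eigenvalue problem as a two-point boundary condition on the modified phase $\theta$, and then to insert the asymptotic formula~(\ref{eq:thetaandR}) for $\theta(c)-\theta(0)$. First I would translate the boundary conditions into conditions on $\theta$. Substituting the Pr\"ufer expressions for $u$ and $u'$ into $\alpha_1 u(0)+\alpha_2 u'(0)=0$ and dividing through by $R(0)/\sqrt[4]{Q(0)}$, which is non-zero for a non-trivial eigenfunction, gives
\begin{displaymath}
  \alpha_1\sin(\theta(0))+\alpha_2\sqrt{Q(0)}\cos(\theta(0))=0\;,
\end{displaymath}
with an analogous identity at $t=c$. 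Each identity determines $\theta(0)$ and $\theta(c)$ modulo $\pi$.

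I would then analyse these two identities case by case. If $\alpha_2=0$, then $\alpha_1\not=0$ forces $\sin(\theta(0))=0$, so normalising $\theta(0)\in[0,\pi)$ gives $\theta(0)=0$ exactly. If $\alpha_2\not=0$, then as $\lambda\to\infty$ the growth of $\sqrt{Q(0)}=\sqrt{\lambda-q(0)}$ forces $\cos(\theta(0))\to 0$, and a direct expansion of $\cot(\theta(0))$ shows that, with $\theta(0)\in[0,\pi)$,
\begin{displaymath}
  \theta(0)=\frac{\pi}{2}+O\left(\frac{1}{\sqrt{\lambda}}\right)\;.
\end{displaymath}
The same dichotomy applies at $t=c$ with $\beta_2$ in place of $\alpha_2$.

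To pin down the correct integer multiple of $\pi$ for $\theta(c)-\theta(0)$, I would invoke the Sturm oscillation theorem: the $n$-th eigenfunction has exactly $n$ zeros in the open interval $(0,c)$, and these correspond to the multiples of $\pi$ strictly crossed by the monotone phase $\theta$ on $(0,c)$. Counting these crossings in each of the four cases gives
\begin{displaymath}
  \theta(c)-\theta(0)=
  \begin{cases}
    n\pi+O(1/\sqrt{\lambda_n}) & \text{if }\alpha_2\not=0\text{ and }\beta_2\not=0\\
    \left(n+\frac{1}{2}\right)\pi+O(1/\sqrt{\lambda_n}) & \text{if exactly one of }\alpha_2,\beta_2\text{ vanishes}\\
    (n+1)\pi+O(1/\sqrt{\lambda_n}) & \text{if }\alpha_2=\beta_2=0
  \end{cases}\;,
\end{displaymath}
and combining this with $\theta(c)-\theta(0)=c\sqrt{\lambda_n}+O(1/\sqrt{\lambda_n})$ from~(\ref{eq:thetaandR}) yields the four cases of the proposition after upgrading the error term using the rough first-pass estimate $\sqrt{\lambda_n}\sim n\pi/c$ to replace $O(1/\sqrt{\lambda_n})$ by $O(1/n)$.

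The main obstacle I expect is the bookkeeping around the oscillation count and the branch of $\theta(0)$: one must check that the small correction $O(1/\sqrt{\lambda})$ in $\theta(0)$ and $\theta(c)$ does not shift which multiples of $\pi$ lie strictly between them, which holds provided $\lambda_n$ is large enough that $\theta$ sits strictly away from the multiples of $\pi$ not prescribed by the boundary data. Once this is verified the result follows from the computation above.
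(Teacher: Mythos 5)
Your proposal is correct and follows essentially the same route as the paper: the modified Pr\"ufer substitution turns the boundary conditions into $\cot(\theta_n(0))=-\tfrac{\alpha_1}{\alpha_2\sqrt{\lambda_n-q(0)}}$ (and its analogue at $c$), the oscillation theorem fixes the integer multiple of $\pi$ in $\theta_n(c)-\theta_n(0)$, and comparison with~(\ref{eq:thetaandR}) together with the first-pass estimate $\sqrt{\lambda_n}\sim n\pi/c$ upgrades $O(1/\sqrt{\lambda_n})$ to $O(1/n)$. The only cosmetic difference is that in the Dirichlet-type cases the paper records $\theta_n(0)=0$ and $\theta_n(c)=(n+1)\pi$ exactly rather than up to $O(1/\sqrt{\lambda_n})$, which your weaker statement still suffices for.
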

\begin{proof}
  Let us first consider the case $\alpha_2,\beta_2\not= 0$. For
  an eigenfunction corresponding to some large eigenvalue
  $\lambda_n$, the associated modified phase $\theta_n$ then satisfies
  \begin{displaymath}
    \cot\left(\theta_n(0)\right)
    =-\frac{\alpha_1}{\alpha_2}\frac{1}{\sqrt{\lambda_n-q(0)}}
    \quad\text{and}\quad
    \cot\left(\theta_n(c)\right)
    =-\frac{\beta_1}{\beta_2}\frac{1}{\sqrt{\lambda_n-q(c)}}\;.
  \end{displaymath}
  It follows that we can choose $\theta_n$
  such that, as $n\to\infty$,
  \begin{displaymath}
    \theta_n(0)=
    \frac{\pi}{2}+\frac{\alpha_1}{\alpha_2\sqrt{\lambda_n}}+
    O\left(\frac{1}{\sqrt{\lambda_n^3}}\right)\;.
  \end{displaymath}
  Since the eigenfunction corresponding to the eigenvalue $\lambda_n$
  has exactly $n$ zeros in the interval $(0,c)$,
  see~\cite[Theorem~10.5]{birkhoff}, we further obtain that
  \begin{displaymath}
    \theta_n(c)=
    \frac{\pi}{2}+n\pi+O\left(\frac{1}{\sqrt{\lambda_n}}\right)\;.
  \end{displaymath}
  This implies that
  \begin{displaymath}
    \theta_n(c)-\theta_n(0)=
    n\pi+O\left(\frac{1}{\sqrt{\lambda_n}}\right)
  \end{displaymath}
  and a comparison with~(\ref{eq:thetaandR}) shows that we indeed
  have, as $n\to\infty$,
  \begin{displaymath}
    c\sqrt{\lambda_n}=n\pi+O\left(\frac{1}{n}\right)\;.
  \end{displaymath}
  We argue similarly in the three remaining cases, except we need
  to take care of phase shifts by $\frac{\pi}{2}$. If
  $\alpha_2\not =0$ and $\beta_2=0$ then, as before, we can choose the
  modified phase such that, as $n\to\infty$,
  \begin{displaymath}
    \theta_n(0)=
    \frac{\pi}{2}+O\left(\frac{1}{\sqrt{\lambda_n}}\right)\;,
  \end{displaymath}
  whereas $\tan\left(\theta_n(c)\right)=0$ due to $\beta_2=0$ and
  the fact that the eigenfunction corresponding to the eigenvalue $\lambda_n$
  has exactly $n$ zeros in the open interval $(0,c)$ imply that
  \begin{displaymath}
    \theta_n(c)=\pi+n\pi\;.
  \end{displaymath}
  A comparison with~(\ref{eq:thetaandR}) again yields the claimed
  result. Likewise, if $\alpha_2=0$ and $\beta_2\not=0$, we can
  choose the modified phase such that, as $n\to\infty$,
  \begin{displaymath}
    \theta_n(0)=0
    \quad\text{and}\quad
    \theta_n(c)=
    \frac{\pi}{2}+n\pi+O\left(\frac{1}{\sqrt{\lambda_n}}\right)\;,
  \end{displaymath}
  and, if $\alpha_2=\beta_2=0$, such that
  \begin{displaymath}
    \theta_n(0)=0
    \quad\text{and}\quad
    \theta_n(c)=(n+1)\pi\;,
  \end{displaymath}
  which, as above, imply the stated asymptotic formulae.
\end{proof}
We are now in a position to deduce the asymptotic formula
for the normalised eigenfunction $u_n$ as $n\to\infty$. We see that
the normalised eigenfunctions
of a Sturm--Liouville problem in Liouville normal form
are asymptotically close to the normalised
eigenfunctions of the Sturm--Liouville problem $u''=-\lambda u$
subject to
suitable boundary conditions.
\begin{propn}\label{propn:efuns}
  The normalised eigenfunctions $\{u_n:n\in\N_0\}$
  of a regular Sturm--Liouville problem in Liouville normal form
  whose corresponding eigenvalues $\{\lambda_n:n\in\N_0\}$
  form a strictly increasing sequence
  satisfy, for $t\in[0,c]$ and as $n\to\infty$,
  \begin{equation*}
    u_n(t)+O\left(\frac{1}{n}\right)=
    \begin{cases}
      \sqrt{\frac{2}{c}}\cos\left(\frac{n\pi t}{c}\right)
      & \text{if }\alpha_2,\beta_2\not= 0\\
      \sqrt{\frac{2}{c}}
      \cos\left(\left(n+\frac{1}{2}\right)\frac{\pi t}{c}\right)
      &\text{if }\alpha_2\not= 0\text{ and }\beta_2=0\\
      \sqrt{\frac{2}{c}}
      \sin\left(\left(n+\frac{1}{2}\right)\frac{\pi t}{c}\right)
      &\text{if }\alpha_2=0\text{ and }\beta_2\not=0\\
      \sqrt{\frac{2}{c}}\sin\left(\frac{\left(n+1\right)\pi t}{c}\right)
      & \text{if }\alpha_2=\beta_2=0
    \end{cases}\;.
  \end{equation*}
\end{propn}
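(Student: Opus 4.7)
The plan is to combine the modified Pr\"ufer substitution~(\ref{eq:prufersub}) with the asymptotic expansions in~(\ref{eq:thetaandR}) and the eigenvalue asymptotics of Proposition~\ref{propn:evals}, then determine the amplitude prefactor from the $L^2$-normalization condition $\int_0^c (u_n(t))^2\dd t=1$. The argument is essentially the same in each of the four boundary cases; I describe the case $\alpha_2,\beta_2\neq 0$ in detail, as the others require only trivial modifications of the starting phase.

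First, I would track the modified phase. From the proof of Proposition~\ref{propn:evals}, the modified phase associated with $u_n$ may be chosen so that $\theta_n(0)=\pi/2+O(1/\sqrt{\lambda_n})$, and combining this with~(\ref{eq:thetaandR}) and $c\sqrt{\lambda_n}=n\pi+O(1/n)$ yields, uniformly in $t\in[0,c]$ as $n\to\infty$,
\[
  \theta_n(t)=\frac{\pi}{2}+\frac{n\pi t}{c}+O\Bigl(\frac{1}{n}\Bigr),
  \quad\text{so that}\quad
  \sin(\theta_n(t))=\cos\Bigl(\frac{n\pi t}{c}\Bigr)+O\Bigl(\frac{1}{n}\Bigr).
\]

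Second, I would extract the amplitude. Writing $Q_n(t)=\lambda_n-q(t)$, the substitution~(\ref{eq:prufersub}) gives $u_n(t)=R_n(t)Q_n(t)^{-1/4}\sin(\theta_n(t))$, where $Q_n(t)^{-1/4}=\lambda_n^{-1/4}(1+O(1/\lambda_n))$ and, by~(\ref{eq:thetaandR}), $R_n(t)=R_n(0)(1+O(1/\lambda_n))$. The normalization condition therefore becomes
\[
  1=\frac{R_n(0)^2}{\sqrt{\lambda_n}}\int_0^c\sin^2(\theta_n(t))\dd t\cdot\bigl(1+O(1/\lambda_n)\bigr).
\]
To handle the oscillatory integral I would write $\sin^2\theta_n=\tfrac{1}{2}(1-\cos(2\theta_n))$, linearize the phase as $2\theta_n(t)=\pi+2\sqrt{\lambda_n}t+O(1/\sqrt{\lambda_n})$, and integrate the resulting pure linear-phase cosine explicitly, obtaining $\int_0^c\sin^2(\theta_n(t))\dd t = c/2 + O(1/n)$. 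Solving for the amplitude yields $R_n(0)\lambda_n^{-1/4}=\sqrt{2/c}+O(1/n)$.

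Combining the phase and amplitude estimates produces, uniformly on $[0,c]$,
\[
  u_n(t)=\sqrt{\tfrac{2}{c}}\cos\Bigl(\frac{n\pi t}{c}\Bigr)+O\Bigl(\frac{1}{n}\Bigr),
\]
which is the claimed formula in this case. The three remaining cases are settled identically: the appropriate value $\theta_n(0)\in\{0,\pi/2\}$ (up to an $O(1/\sqrt{\lambda_n})$ correction dictated by the boundary conditions) together with the corresponding shifted form of $c\sqrt{\lambda_n}$ from Proposition~\ref{propn:evals} replaces $\cos(n\pi t/c)$ by the stated sine or cosine of the appropriate (possibly half-integer) multiple of $\pi t/c$. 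The main technical point is the $O(1/n)$ control of the oscillatory normalization integral, which must beat the target error; this is secured by the strict positivity and near-constancy of $\theta_n'(t)$ for large $n$, which permits the explicit evaluation of the linear-phase cosine integral after linearizing.
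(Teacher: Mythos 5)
Your proposal is correct and follows essentially the same route as the paper: the modified Prüfer substitution, the phase and amplitude asymptotics~(\ref{eq:thetaandR}), the eigenvalue asymptotics of Proposition~\ref{propn:evals}, and the $L^2$-normalisation to pin down $R_n(0)\sim\sqrt{2\sqrt{\lambda_n}/c}$. The only cosmetic difference is that you evaluate $\int_0^c\sin^2(\theta_n(t))\dd t$ by linearising the phase and integrating the resulting cosine directly, whereas the paper changes variables to integrate over $\theta$ using $\theta_n'(t)=\sqrt{\lambda_n}+O(1/\sqrt{\lambda_n})$; both give the same $c/2+O(1/\sqrt{\lambda_n})$ estimate.
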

\begin{proof}
  From the modified Pr\"ufer system
  and~(\ref{eq:thetaandR}), we obtain, as $\lambda\to\infty$,
  \begin{displaymath}
    \theta'(t)=\sqrt{\lambda}+O\left(\frac{1}{\sqrt{\lambda}}\right)
    \quad\text{and}\quad
    \int_{\theta(0)}^{\theta(c)}
    \left(\sin(\theta)\right)^2\dd\theta
    =\left[\frac{\theta}{2}-
      \frac{\sin\left(2\theta\right)}{4}\right]_{\theta(0)}^{\theta(c)}
    =\frac{\sqrt{\lambda}c}{2}+O(1)\;.
  \end{displaymath}
  It follows that
  \begin{displaymath}
    \int_0^c\left(\sin\left(\theta(t)\right)\right)^2\dd t
    =\left(\frac{1}{\sqrt{\lambda}}
      +O\left(\frac{1}{\sqrt{\lambda^3}}\right)\right)
    \int_{\theta(0)}^{\theta(c)}\left(\sin(\theta)\right)^2\dd\theta
    =\frac{c}{2}+O\left(\frac{1}{\sqrt{\lambda}}\right)\;.
  \end{displaymath}
  Using this together with~(\ref{eq:prufersub}) and
  (\ref{eq:thetaandR}) yields
  \begin{displaymath}
    \int_0^c\left(u(t)\right)^2\dd t
    =\left(R(0)+O\left(\frac{1}{\lambda}\right)\right)^2
    \left(\frac{c}{2\sqrt{\lambda}}
      +O\left(\frac{1}{\lambda}\right)\right)\;.
  \end{displaymath}
  Thus, the modified amplitude of a normalised eigenfunction
  satisfies, as $\lambda\to\infty$,
  \begin{equation}\label{eq:R0}
    R(0)=\sqrt{\frac{2\sqrt{\lambda}}{c}}
    \left(1+O\left(\frac{1}{\sqrt{\lambda}}\right)\right)\;.
  \end{equation}
  For the modified phase $\theta_n$
  which corresponds to the normalised eigenfunction with
  eigenvalue $\lambda_n$
  as in the proof of Proposition~\ref{propn:evals}, by
  using~(\ref{eq:thetaandR}), we find that, as $n\to\infty$, if
  $\alpha_2\not= 0$,
  \begin{displaymath}
    \sin\left(\theta_n(t)\right)=
    \sin\left(\frac{\pi}{2}+\sqrt{\lambda_n}t
      +O\left(\frac{1}{\sqrt{\lambda_n}}\right)\right)
    =\cos\left(\sqrt{\lambda_n}t
      +O\left(\frac{1}{\sqrt{\lambda_n}}\right)\right)\;,
  \end{displaymath}
  whereas, if $\alpha_2=0$,
  \begin{displaymath}
    \sin\left(\theta_n(t)\right)=
    \sin\left(\sqrt{\lambda_n}t
      +O\left(\frac{1}{\sqrt{\lambda_n}}\right)\right)\;.
  \end{displaymath}
  The claimed result follows from~(\ref{eq:prufersub}),
  (\ref{eq:thetaandR}), (\ref{eq:R0}) and after
  applying the asymptotic formula
  for the eigenvalues from Proposition~\ref{propn:evals} as well as
  the mean value theorem.
\end{proof}

\subsection{Asymptotic error in the eigenfunction expansion}
\label{sec:main}

Using the Liouville transformation and the asymptotic formulae for the
eigenfunctions and eigenvalues of Sturm--Liouville problems in
Liouville normal form, we prove Theorem~\ref{thm:regularSL} by
reducing the analysis to four base cases, 
which take care of different types of separated homogeneous boundary
conditions. One base case is covered
by~\cite[Theorem~1.2]{foster_habermann}, another one by
Proposition~\ref{propn:baseDN} below, and the remaining two cases can
be deduced from these results.

When proving Proposition~\ref{propn:baseDN}, we employ a similar proof
strategy as was used for~\cite[Theorem~1.2]{foster_habermann} and
\cite[Theorem~1.5]{semicircle}, that is, we split the
analysis into an on-diagonal and an off-diagonal part, with the pointwise
convergence on the diagonal being a consequence of a convergence of
moments and
two local uniform bounds, which allows for an
application of the Arzel\`{a}--Ascoli theorem.
The main difference is that we do not
compute the moments on the diagonal explicitly, and instead illustrate
a powerful approach exploiting the
Sturm--Liouville differential equation
which we apply in our analysis for classical
orthogonal polynomial systems in Section~\ref{sec:COP}.
\begin{propn}\label{propn:baseDN}
  We have, for $s,t\in[0,1]$,
  \begin{displaymath}
    \lim_{N\to\infty}N
    \sum_{n=N+1}^\infty
    \frac{2\sin\left(\left(n+\frac{1}{2}\right)\pi s\right)
      \sin\left(\left(n+\frac{1}{2}\right)\pi t\right)}
      {\left(n+\frac{1}{2}\right)^2\pi^2}=
    \begin{cases}
      \dfrac{1}{\pi^2} & \text{if } s=t\text{ and } t\in(0,1)\\[0.7em]
      \dfrac{2}{\pi^2} & \text{if } s=t=1\\[0.7em]
      0 & \text{otherwise}
    \end{cases}\;.
  \end{displaymath}
\end{propn}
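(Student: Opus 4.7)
The plan is to split the analysis into the off-diagonal case $s\neq t$ and the three on-diagonal subcases $s=t\in(0,1)$, $s=t=0$, and $s=t=1$. The unifying identity
\[
2\sin\left((n+\tfrac{1}{2})\pi s\right)\sin\left((n+\tfrac{1}{2})\pi t\right)
=\cos\left((n+\tfrac{1}{2})\pi(s-t)\right)-\cos\left((n+\tfrac{1}{2})\pi(s+t)\right)
\]
rewrites the summand as a difference of cosine oscillations. For $s\neq t$ in $[0,1]$ the values $s-t$ and $s+t$ both lie outside $2\mathbb{Z}$, so the partial sums of each cosine series are uniformly bounded via a Dirichlet-type estimate of the form $|\sin(\pi x/2)|^{-1}$. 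Abel summation against the monotone coefficients $((n+\tfrac{1}{2})^2\pi^2)^{-1}$, whose tail variation on $\{n>N\}$ is $O(N^{-2})$, then bounds each tail sum by $O(N^{-2})$; multiplication by $N$ gives the off-diagonal limit of zero. The boundary on-diagonal subcases are direct: at $s=t=0$ the summand vanishes, and at $s=t=1$ we have $\sin^2((n+\tfrac{1}{2})\pi)=1$, so the computation reduces to the asymptotic $N\sum_{n>N}(n+\tfrac{1}{2})^{-2}\to 1$.

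The heart of the proof is the interior on-diagonal case $s=t\in(0,1)$. Setting
\[
f_N(t)=N\sum_{n=N+1}^\infty\frac{2\sin^2((n+\tfrac{1}{2})\pi t)}{(n+\tfrac{1}{2})^2\pi^2},
\]
the goal is $f_N(t)\to 1/\pi^2$ pointwise on $(0,1)$. Following the strategy foreshadowed in the introduction for the classical orthogonal polynomial systems, I would establish uniform boundedness of $\{f_N\}$ on $[0,1]$ and locally uniform equicontinuity on compact subsets of $(0,1)$ by bounding $f_N'$ via Abel summation applied to the resulting sine series. It then suffices to verify $\int_0^1 f_N(t)t^k\,\dd t\to 1/(\pi^2(k+1))$ for every $k\in\N_0$: a diagonal Arzel\`a--Ascoli extraction on an exhausting sequence of compacts $[\eps,1-\eps]\subset(0,1)$, combined with dominated convergence to transfer the moment information to any subsequential limit and the density of polynomials in $C([0,1])$, forces the limit to equal $1/\pi^2$ on $(0,1)$.

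The novel ingredient is the moment computation via the Sturm--Liouville equation rather than by direct evaluation of cosine integrals, illustrating on this simple base case the mechanism used later for the classical orthogonal polynomials in Section~\ref{sec:COP}. The eigenfunctions $u_n(t)=\sin((n+\tfrac{1}{2})\pi t)$ satisfy $u_n''=-v_n^2 u_n$ with $v_n=(n+\tfrac{1}{2})\pi$, $u_n(0)=0$ and $u_n'(1)=0$. Integration by parts of $\int_0^1 u_n u_n'' t^k\,\dd t=-v_n^2\int_0^1 u_n^2 t^k\,\dd t$, together with the identity $(u_n')^2=v_n^2-v_n^2 u_n^2$ to rewrite $\int_0^1(u_n')^2 t^k\,\dd t$ in terms of $\int_0^1 u_n^2 t^k\,\dd t$, yields the recursion
\[
\int_0^1 u_n^2 t^k\,\dd t=\frac{1}{2(k+1)}+\frac{k}{4v_n^2}-\frac{k(k-1)}{4v_n^2}\int_0^1 u_n^2 t^{k-2}\,\dd t.
\]
The leading term $1/(2(k+1))$ identifies $\int_0^1 u_n^2 t^k\,\dd t$ asymptotically with the $k$-th moment of the constant $1/2$, and substituting into the moment of $f_N$ together with $N\sum_{n>N}v_n^{-2}\to 1/\pi^2$ gives $\int_0^1 f_N(t)t^k\,\dd t\to 1/(\pi^2(k+1))$.

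The main obstacle is that the limit function is discontinuous at $t=0$ and $t=1$, so one cannot hope for equicontinuity on the whole of $[0,1]$; the diagonal extraction on interior compacts together with the separate treatment of the endpoints accommodates this. A secondary subtlety is controlling the remainder in the moment recursion uniformly in $n$, which the Sturm--Liouville derivation supplies in closed form. Once these are in place, the assembly of the three subcases into the statement of Proposition~\ref{propn:baseDN} is routine.
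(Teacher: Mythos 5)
Your proposal is correct, and on the interior diagonal it is essentially the paper's argument: a uniform bound, a locally uniform bound on the derivative feeding into the Arzel\`a--Ascoli theorem, and identification of the continuous limit through moments computed from the Sturm--Liouville equation. Your moment recursion is exactly the identity the paper obtains by integrating $-\int_0^1 t^k u_n u_n''\dd t$ by parts and combining it with the conserved quantity $t^{k+1}\left(\left(u_n'\right)^2+v_n^2 u_n^2\right)$, so that part is a match. You differ in two places. For the derivative bound you differentiate the tail series termwise and control $N\sum_{n>N}\tfrac{2\sin((2n+1)\pi t)}{(n+\frac{1}{2})\pi}$ by Abel summation against the uniformly bounded partial sums $\sum_{n=0}^{M}\sin((2n+1)\pi t)=\sin^2((M+1)\pi t)/\sin(\pi t)$; the paper instead rewrites the tail as $N\bigl(t-\sum_{n\le N}\bigr)$ via the Green's function $G(s,t)=\min(s,t)$, differentiates the finite partial sum, and uses the Dirichlet kernel together with an integration by parts and the Leibniz series. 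Your version is more direct and avoids the closed form of the Green's function; both are Dirichlet-test arguments at heart. More substantively, you prove the off-diagonal vanishing independently: the product-to-sum identity reduces it to two cosine series with arguments $s\mp t\notin 2\Z$, and Abel summation against the decreasing coefficients bounds each tail by $O(N^{-2})$, so the rescaled tail is $O(N^{-1})$ --- in fact this gives vanishing for every exponent $\gamma<2$, strictly more than the statement asks. The paper instead deduces the off-diagonal case from the on-diagonal one via $\cos(2\theta)=1-2\sin^2(\theta)$ and the same product-to-sum identity, which is shorter once the diagonal is known but yields no rate. Your route buys a quantitative, self-contained off-diagonal estimate at the cost of one extra Dirichlet-kernel computation; everything you would need to fill in (the uniform bound $|f_N|\le 2/\pi^2$, the tail asymptotic $N\sum_{n>N}(n+\frac{1}{2})^{-2}\to 1$, and the justification of termwise differentiation by uniform convergence of the differentiated series on compacts) is routine.
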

\begin{proof}
  We start by proving the claimed convergence on the diagonal
  $\{s=t\}$ and then 
  deduce the off-diagonal convergence from the on-diagonal
  convergence.
  For $n\in\N_0$, the function
  $u_n\colon[0,1]\to\R$ given by
  \begin{displaymath}
    u_n(t)=\sqrt{2}\sin\left(\left(n+\frac{1}{2}\right)\pi t\right)
  \end{displaymath}
  is a normalised eigenfunction of the Sturm--Liouville problem
  \begin{equation}\label{eq:SLP_trig2}
    \frac{\db^2 u(t)}{\db t^2}
    =-\lambda u(t)
    \quad\text{subject to}\quad
    u(0)=0\;,\quad u'(1)=0\;,
  \end{equation}
  with corresponding eigenvalue
  \begin{displaymath}
    \lambda_n=\left(n+\frac{1}{2}\right)^2\pi^2\;.
  \end{displaymath}
  Hence, we are interested in determining the pointwise limit of
  the functions $S_N\colon [0,1]\to\R$ defined by, for $N\in\N_0$,
  \begin{displaymath}
    S_N(t)=N\sum_{n=N+1}^\infty\frac{\left(u_n(t)\right)^2}
    {\lambda_n}
  \end{displaymath}
  as $N\to\infty$. Due to $u_n(0)=0$ for all $n\in\N_0$ we have
  $\lim_{N\to\infty}S_N(0)=0$, as claimed. For $t=1$, we need
  to compute 
  \begin{displaymath}
    \lim_{N\to\infty}N
    \sum_{n=N+1}^\infty
    \frac{2}{\left(n+\frac{1}{2}\right)^2\pi^2}\;,
  \end{displaymath}
  which is indeed $\frac{2}{\pi^2}$ because
  \begin{equation}\label{eq:F_unifbound1}
    \lim_{N\to\infty}N \sum_{n=N+1}^\infty
    \frac{1}{\left(n+\frac{1}{2}\right)^2}
    \leq \lim_{N\to\infty}N \sum_{n=N+1}^\infty
    \left(\frac{1}{n-\frac{1}{2}}-\frac{1}{n+\frac{1}{2}}\right)
    =\lim_{N\to\infty}\frac{N}{N+\frac{1}{2}}=1
  \end{equation}
  and
  \begin{equation}\label{eq:F_unifbound2}
    \lim_{N\to\infty}N \sum_{n=N+1}^\infty
    \frac{1}{\left(n+\frac{1}{2}\right)^2}
    \geq \lim_{N\to\infty}N \sum_{n=N+1}^\infty
    \left(\frac{1}{n+\frac{1}{2}}-\frac{1}{n+\frac{3}{2}}\right)
    =\lim_{N\to\infty}\frac{N}{N+\frac{3}{2}}=1\;.
  \end{equation}
  We further observe that the bound which was used to
  deduce~(\ref{eq:F_unifbound1}) shows that, for all
  $N\in\N_0$ and for all $t\in[0,1]$,
  \begin{equation}\label{eq:F_bound}
    \left|S_N(t)\right|\leq \frac{2}{\pi^2}\;.
  \end{equation}
  Since the Green's function $G\colon[0,1]\times[0,1]\to\R$ of
  the Sturm--Liouville problem~(\ref{eq:SLP_trig2}) is given by
  \begin{equation}\label{eq:Fgreen}
    G(s,t)=\min(s,t)\;,
  \end{equation}
  we obtain
  \begin{displaymath}
    S_N(t)=N\left(t-\sum_{n=0}^N\frac{\left(u_n(t)\right)^2}
      {\lambda_n}\right)\;.
  \end{displaymath}
  It follows that
  \begin{displaymath}
    S_N'(t)=N\left(1-\sum_{n=0}^N\frac{2u_n(t)u_n'(t)}{\lambda_n}\right)
    =N\left(1-
      \sum_{n=0}^N\frac{4\sin\left((2n+1\right)\pi t)}{(2n+1)\pi}\right)\;.
  \end{displaymath}
  Similar to the proof of~\cite[Lemma~4.2]{foster_habermann}, we use
  \begin{displaymath}
    \sum_{n=0}^N\cos\left((2n+1)\pi t\right)
    =\frac{\sin\left(2(N+1)\pi t\right)}{2\sin(\pi t)}
  \end{displaymath}
  to rewrite, for $t\in(0,1)$,
  \begin{displaymath}
    \sum_{n=0}^N\left(\frac{(-1)^n}{(2n+1)\pi}
      -\frac{\sin\left((2n+1\right)\pi t)}{(2n+1)\pi}\right)
    =-\sum_{n=0}^N\int_{\frac{1}{2}}^t
    \cos\left((2n+1)\pi r\right)\dd r
    =-\int_{\frac{1}{2}}^t
    \frac{\sin\left(2(N+1)\pi r\right)}{2\sin(\pi r)}\dd r\;.
  \end{displaymath}
  As in the proof of~\cite[Lemma~4.2]{foster_habermann}, integration
  by parts shows that, for all $\eps>0$, the family
  \begin{displaymath}
    \left\{N\int_{\frac{1}{2}}^t
      \frac{\sin\left(2(N+1)\pi r\right)}{2\sin(\pi r)}\dd r
      :N\in\N_0\text{ and }t\in[\eps,1-\eps]\right\}
  \end{displaymath}
  is uniformly bounded. Since the Leibniz series
  $\sum_{n=0}^\infty\frac{(-1)^n}{2n+1}$ takes the value
  $\frac{\pi}{4}$, we further have
  \begin{displaymath}
    N\left(\frac{1}{4}-\sum_{n=0}^N\frac{(-1)^n}{(2n+1)\pi}\right)
    =N\sum_{n=N+1}^\infty\frac{(-1)^n}{(2n+1)\pi}\;,
  \end{displaymath}
  which, by a similar telescoping argument as above,
  is bounded uniformly in $N\in\N_0$.
  It follows that the derivative $S_N'$ is locally uniformly bounded on
  the open interval $(0,1)$.
  This together with~(\ref{eq:F_bound})
  implies that the Arzel\`{a}--Ascoli theorem can be applied
  locally to any
  subsequence of $(S_N)_{N\geq 0}$. Repeatedly appealing to the
  Arzel\`{a}--Ascoli theorem and employing a diagonal argument, we deduce
  there exists a subsequence of $(S_N)_{N\geq 0}$ which converges
  pointwise to a continuous limit function on $(0,1)$. To show that
  the full sequence converges pointwise
  and to identify the limit function, we make use of a moment
  argument.
  This is where we exploit the
  Sturm--Liouville differential equation.
  Integration by parts and~(\ref{eq:SLP_trig2}) imply that,
  for all $n,k\in\N_0$,
  \begin{displaymath}
    \lambda_n\int_0^1t^k\left(u_n(t)\right)^2\dd t
    =-\int_0^1t^ku_n(t)u_n''(t)\dd t
    =\int_0^1t^k\left(u_n'(t)\right)^2\dd t
    +\int_0^1 kt^{k-1}u_n(t)u_n'(t)\dd t\;.
  \end{displaymath}
  On the other hand, we obtain from~(\ref{eq:SLP_trig2}) that
  \begin{displaymath}
    \frac{\db}{\db t}\left(
      t^{k+1}\left(u_n'(t)\right)^2
      +\lambda_nt^{k+1}\left(u_n(t)\right)^2\right)
    =(k+1)t^k\left(u_n'(t)\right)^2+
    (k+1)\lambda_nt^k\left(u_n(t)\right)^2\;,
  \end{displaymath}
  which implies
  \begin{displaymath}
    \int_0^1t^k\left(u_n'(t)\right)^2\dd t
    +\lambda_n\int_0^1t^k\left(u_n(t)\right)^2\dd t
    =\frac{\lambda_n\left(u_n(1)\right)^2}{k+1}
    =\frac{2\lambda_n}{k+1}\;.
  \end{displaymath}
  It suffices to observe that, for $k\in\N_0$ fixed and as $n\to\infty$,
  \begin{displaymath}
    \int_0^1 kt^{k-1}u_n(t)u_n'(t)\dd t
    =\frac{1}{2}\left(
      2k-\int_0^1 k(k-1)t^{k-2}\left(u_n(t)\right)^2\dd t
    \right)
    =O(1)\;,
  \end{displaymath}
  which can be seen, for instance, from
  \begin{displaymath}
    \int_0^1 k(k-1)t^{k-2}\left(u_n(t)\right)^2\dd t
    \leq k(k-1)\int_0^1 \left(u_n(t)\right)^2\dd t
    =k(k-1)\;,
  \end{displaymath}
  to deduce that, for $k\in\N_0$ fixed and as $n\to\infty$,
  \begin{displaymath}
    2\lambda_n\int_0^1t^k\left(u_n(t)\right)^2\dd t
    =\frac{2\lambda_n}{k+1}+O(1)\;.
  \end{displaymath}
  Since $\lambda_n=O(n^2)$ as $n\to\infty$, this yields
  \begin{displaymath}
    \int_0^1\frac{t^k\left(u_n(t)\right)^2}{\lambda_n}\dd t
    =\frac{1}{(k+1)\lambda_n}+O\left(n^{-4}\right)\;.
  \end{displaymath}
  Using Fubini's theorem to interchange integration and summation, the
  expression $\lambda_n=\left(n+\frac{1}{2}\right)^2\pi^2$ and the
  bound, for $N\in\N$,
  \begin{displaymath}
    \int_{N+1}^\infty\frac{1}{r^4}\dd r
    \leq\sum_{n=N+1}^\infty\frac{1}{n^4}
    \leq\frac{1}{N^4}+\int_{N+1}^\infty\frac{1}{r^4}\dd r
  \end{displaymath}
  as well as~(\ref{eq:F_unifbound1}) and (\ref{eq:F_unifbound2}), we
  conclude, for all $k\in\N_0$,
  \begin{displaymath}
    \lim_{N\to\infty}\int_0^1 t^k S_N(t)\dd t
    =\frac{1}{(k+1)\pi^2}
    =\int_0^1\frac{t^k}{\pi^2}\dd t\;.
  \end{displaymath}
  If the sequence $(S_N)_{N\geq 0}$ did not converge pointwise, we
  could again apply the Arzel\`{a}--Ascoli theorem and a diagonal argument to
  extract a second subsequence of $(S_N)_{N\geq 0}$ which converged
  pointwise but to a different continuous limit function on $(0,1)$
  compared to the first subsequence. As this contradicts the convergence of
  moments, we obtain that, for $t\in(0,1)$,
  \begin{displaymath}
    \lim_{N\to\infty}S_N(t)=\frac{1}{\pi^2}\;.
  \end{displaymath}

  It remains to prove the pointwise convergence away from the
  diagonal. As for~\cite[Theorem~1.2]{foster_habermann} this follows
  from the on-diagonal convergence. For $t\in(0,1)$, we
  see that
  \begin{displaymath}
    \lim_{N\to\infty}N\sum_{n=N+1}^\infty
    \frac{\cos\left(2\left(n+\frac{1}{2}\right)\pi t\right)}
    {\left(n+\frac{1}{2}\right)^2\pi^2}
    =\lim_{N\to\infty}N\sum_{n=N+1}^\infty
      \frac{1-2\left(\sin\left(\left(n+\frac{1}{2}\right)\pi
            t\right)\right)^2}
      {\left(n+\frac{1}{2}\right)^2\pi^2}
    =\frac{1}{\pi^2}-\frac{1}{\pi^2}=0\;.
  \end{displaymath}
  Applying the identity, for $n\in\N_0$,
  \begin{displaymath}
    2\sin\left(\left(n+\frac{1}{2}\right)\pi s\right)
    \sin\left(\left(n+\frac{1}{2}\right)\pi t\right)
    =\cos\left(\left(n+\frac{1}{2}\right)\pi(t-s)\right)
    -\cos\left(\left(n+\frac{1}{2}\right)\pi(t+s)\right)\;,
  \end{displaymath}
  we deduce the claimed convergence to zero for $s\not= t$.
\end{proof}
Using characteristic functions as in the proof
of~\cite[Theorem~1.6]{semicircle}, Theorem~\ref{thm:KLBM} is a
consequence of Proposition~\ref{propn:baseDN} since the fluctuation
processes $(F_t^N)_{t\in[0,1]}$ are zero-mean Gaussian processes whose
covariance functions, due to~(\ref{eq:Fgreen}), are exactly given by
\begin{displaymath}
  N\sum_{n=N+1}^\infty
  \frac{2\sin\left(\left(n+\frac{1}{2}\right)\pi s\right)
    \sin\left(\left(n+\frac{1}{2}\right)\pi t\right)}
  {\left(n+\frac{1}{2}\right)^2\pi^2}\;.
\end{displaymath}
Before we turn to the proof of Theorem~\ref{thm:regularSL}, we show
how~Proposition~\ref{propn:baseDN}
and~\cite[Theorem~1.2]{foster_habermann}, whose result
is~(\ref{eq:baseDD}), allow us to
analyse the remaining two base cases.
From~\cite[Theorem~1.2]{foster_habermann}, it follows that, for
$t\in(0,1)$,
\begin{displaymath}
  \lim_{N\to\infty}N \sum_{n=N+1}^\infty
  \frac{\cos\left(2n\pi t\right)}{n^2\pi^2}=0\;,
\end{displaymath}
see~\cite[Corollary~1.3]{foster_habermann}. This together
with~\cite[Theorem~1.2]{foster_habermann} and
\begin{displaymath}
  2\cos(n\pi s)\cos(n\pi t)
  =\cos\left(n\pi(t-s)\right)+\cos\left(n\pi(t+s)\right)
\end{displaymath}
as well as
\begin{displaymath}
  N\sum_{n=N+1}^\infty\frac{2\left(\cos(n\pi t)\right)^2}{n^2\pi^2}
  =N\left(\sum_{n=N+1}^\infty\frac{2}{n^2\pi^2}
  -\sum_{n=N+1}^\infty\frac{2\left(\sin(n\pi t)\right)^2}{n^2\pi^2}\right)
\end{displaymath}
implies that, for $s,t\in[0,1]$,
\begin{equation}\label{eq:baseNN}
  \lim_{N\to\infty}N
  \sum_{n=N+1}^\infty\frac{2\cos(n\pi s)\cos(n\pi t)}{n^2\pi^2}=
  \begin{cases}
    \dfrac{1}{\pi^2} & \text{if } s=t\text{ and } t\in(0,1)\\[0.7em]
    \dfrac{2}{\pi^2} & \text{if } s=t=0\text{ or } s=t=1\\[0.7em]
    0 & \text{otherwise}
  \end{cases}\;.
\end{equation}
Similarly, we conclude, for $s,t\in[0,1]$,
\begin{equation}\label{eq:baseND}
  \lim_{N\to\infty}N
  \sum_{n=N+1}^\infty
  \frac{2\cos\left(\left(n+\frac{1}{2}\right)\pi s\right)
    \cos\left(\left(n+\frac{1}{2}\right)\pi t\right)}
  {\left(n+\frac{1}{2}\right)^2\pi^2}=
  \begin{cases}
    \dfrac{1}{\pi^2} & \text{if } s=t\text{ and } t\in(0,1)\\[0.7em]
    \dfrac{2}{\pi^2} & \text{if } s=t=0\\[0.7em]
    0 & \text{otherwise}
  \end{cases}\;.
\end{equation}
\begin{proof}[Proof of Theorem~\ref{thm:regularSL}]
  Since $p,w\in C^2([a,b])$, we can apply the Liouville transformation
  to the given regular Sturm--Liouville problem. Under this
  transformation, the family $\{\phi_n:n\in\N_0\}$ of
  eigenfunctions on $[a,b]$
  is transformed to the  family $\{u_n:n\in\N_0\}$ of eigenfunctions on $[0,c]$
  defined by
  \begin{displaymath}
    u_n(t)=\sqrt[4]{p(x)w(x)}\phi_n(x)\;,
  \end{displaymath}
  where
  \begin{displaymath}
    t=\int_a^x\sqrt{\frac{w(z)}{p(z)}}\dd z
    \quad\text{and}\quad
    c=\int_a^b\sqrt{\frac{w(z)}{p(z)}}\dd z\;.
  \end{displaymath}
  Further setting
  \begin{displaymath}
    s=\int_a^y\sqrt{\frac{w(z)}{p(z)}}\dd z
  \end{displaymath}
  and observing that
  \begin{displaymath}
    \int_a^b\left(\phi_n(z)\right)^2w(z)\dd z
    =\int_0^c\left(u_n(r)\right)^2\dd r\;,
  \end{displaymath}
  we can write
  \begin{equation}\label{eq:transformation}
    N\sum_{n=N+1}^\infty \frac{\phi_n(x)\phi_n(y)}
    {\lambda_n\int_a^b\left(\phi_n(z)\right)^2w(z)\dd z}
    =N\sum_{n=N+1}^\infty\frac{1}{\sqrt[4]{p(x)w(x)}\sqrt[4]{p(y)w(y)}}
    \frac{u_n(s)u_n(t)}
    {\lambda_n\int_0^c\left(u_n(r)\right)^2\dd r}\;.
  \end{equation}
  Moreover, since $p$ and $w$ are positive functions by assumption,
  the four cases distinguished by $\alpha_2=0$ or
  $\alpha_2\not=0$, and $\beta_2=0$ or $\beta_2\not=0$ remain
  invariant under the Liouville transformation, whilst the values of the
  non-zero constants might change.
  In particular, if we have $\alpha_2,\beta_2\not=0$ in the original
  Sturm--Liouville problem then, by
  Proposition~\ref{propn:efuns}, we obtain, as $N\to\infty$,
  \begin{displaymath}
    N\sum_{n=N+1}^\infty
    \frac{u_n(s)u_n(t)}
    {\lambda_n\int_0^c\left(u_n(r)\right)^2\dd r}
    =N\sum_{n=N+1}^\infty
      \frac{2\cos\left(\frac{n\pi s}{c}\right)
        \cos\left(\frac{n\pi t}{c}\right)
        +O\left(\frac{1}{n}\right)}{c\lambda_n}\;.
  \end{displaymath}
  Applying Proposition~\ref{propn:evals}, which for
  $\alpha_2,\beta_2\not=0$ gives that, as $n\to\infty$,
  \begin{displaymath}
    c^2\lambda_n=n^2\pi^2+O(1)\;,
  \end{displaymath}
  we conclude
  \begin{equation}\label{eq:conv_main}
    N\sum_{n=N+1}^\infty
    \frac{u_n(s)u_n(t)}
    {\lambda_n\int_0^c\left(u_n(r)\right)^2\dd r}
    =N\sum_{n=N+1}^\infty\left(
      \frac{2c\cos\left(\frac{n\pi s}{c}\right)
        \cos\left(\frac{n\pi t}{c}\right)}{n^2\pi^2}
      +O\left(\frac{1}{n^3}\right)\right)\;.
  \end{equation}
  It suffices to note that
  \begin{displaymath}
    0\leq \lim_{N\to\infty} N\sum_{n=N+1}^\infty\frac{1}{n^3}
    \leq \lim_{N\to\infty} \sum_{n=N+1}^\infty\frac{1}{n^2}=0
  \end{displaymath}
  to deduce the claimed result from~(\ref{eq:transformation}),
  (\ref{eq:conv_main}) and (\ref{eq:baseNN}). The remaining
  three cases given in terms of the separated homogeneous boundary
  conditions follow similarly from Proposition~\ref{propn:evals} and
  Proposition~\ref{propn:efuns} as well
  as from~(\ref{eq:baseND}), Proposition~\ref{propn:baseDN} and
  \cite[Theorem~1.2]{foster_habermann}, respectively.
\end{proof}

\subsection{Illustrating example}
\label{sec:1example}

We present one example to illustrate Theorem~\ref{thm:regularSL}
which goes beyond the four base cases encountered above but where the
eigenfunctions and eigenvalues can still be computed explicitly,
allowing for the inclusion of a plot.

\begin{eg0}\rm
  We consider the eigenvalue problem
  \begin{align}
    \phi''(x)+3\phi'(x)+2\phi(x)=-\lambda \phi(x)
    \quad&\text{subject to}\nonumber
    \quad \phi(0)=0\;,\quad \phi(1)=0\;,
    \intertext{which can be rewritten as the Sturm--Liouville problem}
    \left(\e^{3x}\phi'(x)\right)'+2\e^{3x}\phi(x)=-\lambda \e^{3x}\phi(x)
    \quad&\text{subject to}\label{eq:SLex1}
    \quad \phi(0)=0\;,\quad \phi(1)=0\;.
  \end{align}
  Thus, we are in the setting of Theorem~\ref{thm:regularSL} with
  $a=0$, $b=1$ and, for $x\in[0,1]$,
  \begin{displaymath}
    p(x)=w(x)=\e^{3x}\;.
  \end{displaymath}
  A direct computation shows that the normalised eigenfunctions are
  given by, for $n\in\N_0$,
  \begin{displaymath}
    \phi_n(x)=\sqrt{2}\e^{-3x/2}\sin\left((n+1)\pi x\right)\;,
  \end{displaymath}
  with corresponding eigenvalues
  \begin{displaymath}
    \lambda_n=\frac{1+4(n+1)^2\pi^2}{4}\;.
  \end{displaymath}
  To construct the Green's function $G\colon[0,1]\times[0,1]\to\R$ of
  the Sturm--Liouville
  problem~(\ref{eq:SLex1}), we use solutions to the homogeneous
  differential equation
  \begin{displaymath}
    v''(x)+3v'(x)+2v(x)=0
  \end{displaymath}
  one subject to $v(0)=0$ and another one subject to $v(1)=0$. Taking
  \begin{displaymath}
    v_1(x)=\e^{-x}-\e^{-2x}
    \quad\text{as well as}\quad
    v_2(x)=\e^{-2x}-\e^{-1-x}
  \end{displaymath}
  and computing
  \begin{displaymath}
    p(x)\left(v_1'(x)v_2(x)-v_1(x)v_2'(x)\right)
    =1-\e^{-1}\;,
  \end{displaymath}
  we obtain, for $x,y\in[0,1]$,
  \begin{displaymath}
    G(x,y)=
    \begin{cases}
      \dfrac{\left(\e^{-x}-\e^{-2x}\right)
        \left(\e^{-2y}-\e^{-1-y}\right)}{1-\e^{-1}} & \text{if }
      0\leq x\leq y\\[0.7em]
      \dfrac{\left(\e^{-y}-\e^{-2y}\right)
        \left(\e^{-2x}-\e^{-1-x}\right)}{1-\e^{-1}} & \text{if }
      y< x\leq 1
    \end{cases}\;.
  \end{displaymath}
  In particular, we have, for $N\in\N_0$,
  \begin{displaymath}
    N\sum_{n=N+1}^\infty \frac{\left(\phi_n(x)\right)^2}
    {\lambda_n}
    =N\left(G(x,x)-
      \sum_{n=0}^N \frac{\left(\phi_n(x)\right)^2}{\lambda_n}\right)\;.
  \end{displaymath}
  A plot of this function for $N=100$ is shown in
  Figure~\ref{fig:example}, which nicely illustrates that,
  on the diagonal away from its endpoints,
  the rescaled
  error in approximating the Green's function is close to
  be given by
  $x\mapsto \e^{-3x}/\pi^2$, as asserted by Theorem~\ref{thm:regularSL}.
  \begin{figure}[ht]
    \centering
    \includegraphics[width=0.6\linewidth]{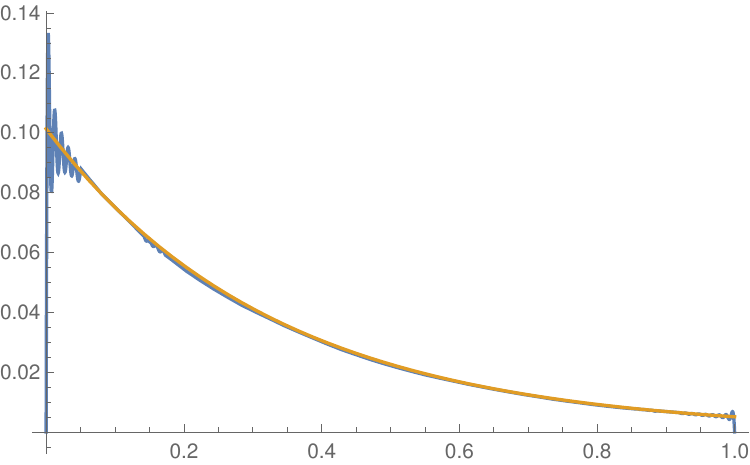}
    \caption{Rescaled error in approximating Green's function for
      $N=100$ (blue) and $x\mapsto \e^{-3x}/\pi^2$ (yellow) on $(0,1)$.}
    \label{fig:example}
  \end{figure}
\end{eg0}

\section{Green's function formed by classical
  orthogonal polynomials}
\label{sec:COP}

We prove Theorem~\ref{thm:hermite}, Theorem~\ref{thm:laguerre} and
Theorem~\ref{thm:jacobi}, that is, we derive the asymptotic error in
the eigenfunction expansion for the Green's function associated with
the Hermite polynomials, the associated Laguerre polynomials and the
Jacobi polynomials.
After we first show that the limit moments on the
diagonal satisfy the desired recurrence relation and find a
Christoffel--Darboux type formula for any family of classical
orthogonal polynomials, we then need to consider the Hermite polynomials,
the associated Laguerre polynomials and the Jacobi polynomials
separately to conclude the proofs.
This involves using asymptotic
formulae for the orthogonal polynomials to obtain sufficiently strong
bounds and, for
the Hermite polynomials and the associated Laguerre polynomials,
showing that the
moments on the diagonal satisfy the necessary initial condition.
For the Jacobi polynomials, the latter seems challenging which is why 
we employ a workaround.

For a family $\{Y_n:n\in\N_0\}$ of
classical orthogonal polynomials on
the interval $I$ of orthogonality, there exists a linear function $L$
and a polynomial $Q$ of degree at most two as well as a family
$\{\lambda_n: n\in\N_0\}$ such that, for all $n\in\N_0$ and
for all $x\in I$,
\begin{equation}\label{COPDE}
  Q(x)Y_n''(x)+L(x)Y_n'(x)+\lambda_n Y_n(x)=0\;.
\end{equation}
They arise by means of linear transformation from the Jacobi polynomials
if $Q$ is of degree two and has two distinct zeros, from the associated
Laguerre polynomials if $Q$
is linear, and from the Hermite polynomials if $Q$ is constant. In
particular, the three main theorems proved in this section allow us to
deduce the considered asymptotic error for other classical orthogonal
polynomial systems.

Throughout, we assume that the degree of a polynomial is given by its index.
Using the
integrating factor
\begin{displaymath}
  R(x)=\exp\left(\int^x\frac{L(z)}{Q(z)}\dd z\right)\;,
\end{displaymath}
we can write~(\ref{COPDE}) in its standard Sturm--Liouville form
\begin{displaymath}
  \left(P(x)Y_n'(x)\right)'=-\lambda_nW(x) Y_n(x)
  \quad\text{with}\quad
  P(x)=R(x)\quad\text{and}\quad W(x)=\frac{R(x)}{Q(x)}\;,
\end{displaymath}
where $\{\lambda_n: n\in\N_0\}$ is the family of eigenvalues
corresponding to $\{Y_n: n\in\N_0\}$. As discussed in
Lesky~\cite[Satz~3]{lesky}, the eigenvalues are mutually distinct and
given by
\begin{equation}\label{eq:evals}
  \lambda_n=-n\left(\frac{n-1}{2}Q''+L'\right)\;,
\end{equation}
which is indeed a constant. Moreover, it follows that
$\lambda_n\not=0$ for all $n\in\N$ since $\lambda_0=0$. 
It is further shown, see Lesky~\cite[Satz~4]{lesky},
that at a boundary point $c\in\pt I$ of the interval of
orthogonality with respect to the weight function $W\colon I\to\R$, we
have
\begin{displaymath}
  P(c)=R(c)=0
  \quad\text{if }c\in\R
\end{displaymath}
and
\begin{displaymath}
  \lim_{x\to c}x^kW(x)=\lim_{x\to c}\frac{x^kR(x)}{Q(x)}=0
  \quad\text{for all }k\in\N_0
  \quad\text{if }c=\pm\infty\;.
\end{displaymath}
We remark that for a given interval $I$ of orthogonality and a
suitable weight function $W$, we can recover the associated family
$\{Y_n:n\in\N_0\}$
of classical orthogonal polynomials by applying the
Gram--Schmidt process to the monomials $\{x^n:n\in\N_0\}$ on $I$ with
respect to the $L^2(I,W(x)\dd x)$ inner product and by imposing a
normalisation condition.

The one system of polynomials arising from Sturm--Liouville
differential equations which is notably missing in our
discussion above compared to the classification given in
Bochner~\cite{bochner} are the Bessel polynomials. As discussed by
Littlejohn and Krall in~\cite{littlekrall}, no sufficiently satisfying
weight function has been identified for the Bessel polynomials, apart
from the orthogonality relation obtained by integrating over the unit
circle in the complex plane with respect to $\e^{-2/x}$ given in Krall
and Frink~\cite{krall}. Recalling that the weight function features in
our expression for the asymptotic error,
one may wonder if one could use an error analysis of the eigenfunction
expansion for the Green's function associated with the Bessel polynomials
to find a good candidate for the long sought-after weight function.
However, initial plots suggest that this is not feasible.

For a given family $\{Y_n:n\in\N_0\}$ of
classical orthogonal polynomials on the interval $I$, let $M_n$ denote
the square of the
$L^2(I,W(x)\dd x)$ norm of $Y_n$, that is,
\begin{displaymath}
  M_n=\int_IW(x)\left(Y_n(x)\right)^2\dd x\;.
\end{displaymath}
Choosing $\tau\in\{\frac{1}{2},1\}$ such that $\sqrt{\lambda}_n=O(n^{\tau})$ as
$n\to\infty$, whose existence is guaranteed by~(\ref{eq:evals}) and the
result that the eigenvalues $\lambda_n$ are mutually distinct, we are
then interested in studying, 
for $x,y\in I\setminus\pt I$,
\begin{equation}\label{eq:green_fluct_COP}
  \lim_{N\to\infty}N^{\tau}\sum_{n=N+1}^\infty
  \frac{Y_n(x)Y_n(y)}{M_n\lambda_n}\;,
\end{equation}
with the one caveat that in the off-diagonal convergence for the
associated Laguerre polynomials we can only deal with scaling
exponents strictly smaller than $\frac{1}{2}$.

In our analysis, we use the same approach which
already proved powerful in~\cite{foster_habermann, semicircle} and
which was demonstrated in the previous section, that
is, we split our considerations into an on-diagonal part, consisting
of a moment argument and local uniform
bounds feeding into the Arzel\`{a}--Ascoli theorem, and an
off-diagonal part. While we are able to
establish in general that, under suitable
boundary conditions, in the limit as $N\to\infty$ the moments on
the diagonal satisfy the same recurrence relation as the desired limit
function and to derive a general Christoffel--Darboux type formula
used
for the off-diagonal convergence,
it seems difficult to develop a full general analysis.
In particular, to show that the moments
satisfy the desired initial condition, to obtain the local uniform
bounds and to complete the analysis away from the diagonal, we need to
distinguish between the Hermite polynomials, the associated Laguerre
polynomials and the Jacobi polynomials. We further rely
on existing asymptotic expansions for these polynomials as given in
Szeg\H{o}~\cite[Chapter~8]{szego}.

We start by determining
the recurrence relation satisfied, in the limit
as $N\to\infty$, by the moments
of the function~(\ref{eq:green_fluct_COP}) restricted to the diagonal
$\{x=y\}$. As the final part of the moment
analysis requires us to consider the three main families of classical
orthogonal polynomials separately, we delay the observation that the
integrals and series needed
in the following are well-defined and
finite until
then.
Moreover, while the boundary conditions and the asymptotic assumptions in the
propositions below are satisfied whenever we want to apply the
propositions, we still include the assumptions explicitly to make it clear
what is needed.
\begin{propn}\label{propn:moments}
  Let $\{Y_n: n\in\N_0\}$ be a family of classical orthogonal
  polynomials which are orthogonal on the interval $I=(a,b)$ with
  respect to the weight function $W\colon I\to\R$ and which together
  with the family $\{\lambda_n: n\in\N_0\}$ of eigenvalues solve the
  Sturm--Liouville differential equation
  \begin{displaymath}
    (P(x)Y_n'(x))'=-\lambda_nW(x) Y_n(x)\;,
  \end{displaymath}
  or equivalently
  \begin{displaymath}
    Q(x)Y_n''(x)+L(x)Y_n'(x)+\lambda_n Y_n(x)=0\;.
  \end{displaymath}
  Assume that, for all $l,n\in\N_0$ and for $c\in\{a,b\}$,
  \begin{equation}\label{eq:bc_COP}
    \lim_{x\to c}x^{l+1}P(x)W(x)\left(Y_n(x)\right)^2=
    \lim_{x\to c}x^lP(x)Y_n(x)=\lim_{x\to c}x^lP(x)Y_n'(x)=0
  \end{equation}
  and that, for $l\in\N_0$ fixed and as $n\to\infty$, 
  \begin{equation}\label{eq:ac_COP}
    \int_I\left(x^l P(x)\right)'P(x)Y_n(x)Y'_n(x)\dd x
    =O\left(M_n\right)\;.
  \end{equation}
  Then the limit moments, for $k\in\N_0$,
  \begin{displaymath}
    m_{k}=\lim_{N\to\infty}N^\tau \sum_{n=N+1}^\infty\int_I
    \frac{x^k \left(W(x)\right)^2\left(Y_n(x)\right)^2}{M_n\lambda_n}
    \dd x
  \end{displaymath}
  satisfy the recurrence relation
  \begin{displaymath}
    (k+1)Q(0)m_{k}+
    \left(L(0)+\left(k+\frac{1}{2}\right)Q'(0)\right)m_{k+1}+
    \left(L'(0)+\frac{k}{2}Q''(0)\right)m_{k+2}=0\;.
  \end{displaymath}
\end{propn}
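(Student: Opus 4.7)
The plan is to follow the moment method used in the proof of Proposition~\ref{propn:baseDN}: extract from the Sturm--Liouville differential equation an algebraic identity satisfied by $I_n^k := \int_I x^k (W(x))^2 (Y_n(x))^2 \dd x$ up to an error of order $M_n/\lambda_n$, then divide by $M_n\lambda_n$, sum over $n > N$, multiply by $N^\tau$, and let $N\to\infty$. The hypotheses of the proposition are designed so that all boundary contributions from integration by parts vanish (via~(\ref{eq:bc_COP})) while a certain $Y_nY_n'$ drift integral is of order $M_n$ (via~(\ref{eq:ac_COP})).

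The first key identity comes from multiplying the Sturm--Liouville equation in the form $(P(x)Y_n'(x))' = -\lambda_n W(x) Y_n(x)$ by $x^k P(x) Y_n(x)$ and integrating by parts. Using~(\ref{eq:bc_COP}) to kill the boundary term yields
\begin{equation*}
\int_I x^k (P(x))^2 (Y_n'(x))^2 \dd x = \lambda_n \int_I x^k P(x) W(x) (Y_n(x))^2 \dd x - \int_I (x^k P(x))' P(x) Y_n(x) Y_n'(x) \dd x.
\end{equation*}
The last integral is $O(M_n)$ by~(\ref{eq:ac_COP}). Since $PW = QW^2$ and $Q(x) = Q(0)+Q'(0)x+\tfrac{Q''(0)}{2}x^2$, the middle term becomes $\lambda_n \bigl(Q(0) I_n^k + Q'(0) I_n^{k+1} + \tfrac{Q''(0)}{2} I_n^{k+2}\bigr)$.

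A second identity comes from multiplying the Sturm--Liouville equation by $x^k (W(x))^2 Y_n(x)$, integrating by parts, and using $WQ = P$, $WL = P'$ and $(PW)' = W^2(2L-Q')$; this expresses $\lambda_n I_n^k$ in terms of $\int_I x^k P(x) W(x) (Y_n'(x))^2 \dd x$ together with integrals of the form $\int x^l W^2 Y_n Y_n' \dd x$ for $l \in \{k-1, k, k+1\}$. Combining this with the first identity eliminates the derivative-squared integral, and after expanding $L(x) = L(0) + L'(0) x$ and $Q(x)$ about $x = 0$ and invoking~(\ref{eq:ac_COP}) once more to discard the $Y_n Y_n'$ drift terms, the algebra collapses to
\begin{equation*}
(k+1)Q(0) I_n^k + \Bigl(L(0) + (k+\tfrac{1}{2}) Q'(0)\Bigr) I_n^{k+1} + \Bigl(L'(0) + \tfrac{k}{2} Q''(0)\Bigr) I_n^{k+2} = O\!\left(\frac{M_n}{\lambda_n}\right).
\end{equation*}

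Dividing by $M_n\lambda_n$, summing over $n>N$ and multiplying by $N^\tau$, the formula~(\ref{eq:evals}) together with the choice of $\tau$ gives $\lambda_n = \Theta(n^{2\tau})$, so that $N^\tau \sum_{n>N} 1/\lambda_n^2 = O(N^{1-3\tau})$ vanishes as $N\to\infty$ for both $\tau\in\{\tfrac{1}{2}, 1\}$. The error term therefore disappears in the limit and the claimed recurrence for $m_k$ follows. The main obstacle is the algebraic bookkeeping of the previous step: obtaining exactly the coefficients $(k+1)Q(0)$, $L(0)+(k+\tfrac{1}{2})Q'(0)$ and $L'(0)+\tfrac{k}{2}Q''(0)$ requires tracking a sequence of cancellations arising from the polynomial structure of $Q$ and $L$ and from the relations between $P$, $W$, and their derivatives.
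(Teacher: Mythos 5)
Your overall skeleton (derive an identity for the weighted moments from the differential equation, control the errors by~(\ref{eq:bc_COP}) and~(\ref{eq:ac_COP}), divide by $M_n\lambda_n$, sum and pass to the limit) is the paper's, and your first identity and the final $N^\tau\sum_{n>N}O(\lambda_n^{-2})\to 0$ step are exactly right. The gap is in the second identity and the claimed combination. First, the two derivative-squared integrals do not match: identity one produces $\int_I x^k(P(x))^2(Y_n'(x))^2\dd x$ while identity two produces $\int_I x^kP(x)W(x)(Y_n'(x))^2\dd x$, and these differ by a factor of $Q$, so "combining eliminates the derivative-squared integral" does not go through. If you bridge them in the natural way, writing $P^2=Q\cdot PW$, expanding $Q$ about $0$ and applying your second identity at the indices $k,k+1,k+2$, the $\lambda_nI_n^l$ terms cancel identically and the whole combination collapses to $\int_I(x^kP(x))'P(x)Y_n(x)Y_n'(x)\dd x=O(M_n)$ --- that is, precisely to the hypothesis~(\ref{eq:ac_COP}), yielding no recurrence at all. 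Second, the drift terms $\int_I x^lW^2Y_nY_n'\dd x$ appearing in your second identity are \emph{not} of the form controlled by~(\ref{eq:ac_COP}) (whose weight is $(x^lP)'P=(lx^{l-1}Q+x^lL)PW$, carrying an extra factor of $Q$), and they cannot be discarded: integrating them by parts shows they are combinations of the $I_n^j$ themselves, hence of the same order as the quantities whose limit defines $m_k$, not $O(M_n/\lambda_n)$.

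What actually produces the recurrence in the paper is a different pairing of integrations by parts after your first identity: writing $x^k=\frac{1}{k+1}(x^{k+1})'$, one integrates both $\int_I x^k(P Y_n')^2\dd x$ and $\int_I x^kPW Y_n^2\dd x$ by parts so that each is expressed through the common pivot $\int_I x^{k+1}PWY_nY_n'\dd x$ (the first equals $\frac{2\lambda_n}{k+1}$ times it, the second equals $-\frac{2}{k+1}$ times it minus $\frac{1}{k+1}\int_I x^{k+1}(PW)'Y_n^2\dd x$). Eliminating the pivot gives
\begin{displaymath}
  2\lambda_n\int_Ix^kP(x)W(x)\left(Y_n(x)\right)^2\dd x
  +\frac{\lambda_n}{k+1}\int_Ix^{k+1}\left(P(x)W(x)\right)'
  \left(Y_n(x)\right)^2\dd x=O(M_n)\;,
\end{displaymath}
and the coefficients $(k+1)Q(0)$, $L(0)+(k+\frac{1}{2})Q'(0)$, $L'(0)+\frac{k}{2}Q''(0)$, together with the shift to $m_{k+1}$ and $m_{k+2}$, then come from $PW=QW^2$ and $(PW)'=(2L-Q')W^2$. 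This $\frac{1}{k+1}$ structure is exactly what your proposed combination cannot generate, so the algebraic core of the argument is missing.
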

\begin{proof}
  Using the Sturm--Liouville differential equation, we obtain that, for
  all $k,n\in\N_0$,
  \begin{displaymath}
    \lambda_n\int_Ix^kP(x)W(x)\left(Y_n(x)\right)^2\dd x
    =-\int_I x^k(P(x)Y_n'(x))'P(x)Y_n(x)\dd x\;.
  \end{displaymath}
  After integrating by parts, we have
  \begin{align*}
    &\lambda_n\int_Ix^kP(x)W(x)\left(Y_n(x)\right)^2\dd x\\
    &\quad=-\left.x^k\left(P(x)\right)^2Y_n(x)Y_n'(x)\right|_a^b
      +\int_Ix^k\left(P(x) Y_n'(x)\right)^2\dd x
      +\int_I\left(x^kP(x)\right)'P(x)Y_n(x)Y_n'(x)\dd x\;.
  \end{align*}
  Due to the boundary conditions~(\ref{eq:bc_COP}), which give
  \begin{displaymath}
    \left.x^k\left(P(x)\right)^2Y_n(x)Y_n'(x)\right|_a^b=0\;,
  \end{displaymath}
  and the asymptotic
  assumption~(\ref{eq:ac_COP}), the above reduces to, for $k\in\N_0$
  fixed and as $n\to\infty$,
  \begin{equation}\label{eq:asymptotic}
    \lambda_n\int_Ix^kP(x)W(x)\left(Y_n(x)\right)^2\dd x
    =\int_Ix^k\left(P(x) Y_n'(x)\right)^2\dd x
    +O\left(M_n\right)\;.
  \end{equation}
  Integrating by parts yet again and using~(\ref{eq:bc_COP}), we
  further see that
  \begin{align*}
    \int_Ix^k\left(P(x) Y_n'(x)\right)^2\dd x
    &=-\frac{2}{k+1}\int_Ix^{k+1}P(x) Y_n'(x)
    \left(P(x) Y_n'(x)\right)'\dd x\\
    &=\frac{2\lambda_n}{k+1}\int_Ix^{k+1}P(x) W(x) Y_n(x) Y_n'(x)\dd x
  \end{align*}
  as well as
  \begin{align*}
    &\int_Ix^kP(x)W(x)\left(Y_n(x)\right)^2\dd x\\
    &\quad=-\frac{2}{k+1}\int_Ix^{k+1}P(x) W(x) Y_n(x) Y_n'(x)\dd x
    -\frac{1}{k+1}\int_Ix^{k+1}\left(P(x) W(x)\right)'
    \left(Y_n(x)\right)^2\dd x\;,
  \end{align*}
  which can be put together to give
  \begin{align*}
    &\int_Ix^k\left(P(x) Y_n'(x)\right)^2\dd x\\
    &\quad=-\lambda_n\int_Ix^kP(x)W(x)\left(Y_n(x)\right)^2\dd x
    -\frac{\lambda_n}{k+1}\int_Ix^{k+1}\left(P(x) W(x)\right)'
    \left(Y_n(x)\right)^2\dd x\;.
  \end{align*}
  Combining this with~(\ref{eq:asymptotic}) yields, for $k\in\N_0$
  fixed and as $n\to\infty$,
  \begin{equation}\label{eq:mom_con1}
    2\lambda_n\int_Ix^kP(x)W(x)\left(Y_n(x)\right)^2\dd x
    +\frac{\lambda_n}{k+1}\int_Ix^{k+1}\left(P(x) W(x)\right)'
    \left(Y_n(x)\right)^2\dd x=O(M_n)\;.
  \end{equation}
  Observing that
  \begin{displaymath}
    P(x)W(x)=Q(x)\left(\frac{R(x)}{Q(x)}\right)^2
    \quad\text{and}\quad
    \left(P(x) W(x)\right)'=
    2\left(L(x)-\frac{1}{2}Q'(x)\right)\left(\frac{R(x)}{Q(x)}\right)^2\;,
  \end{displaymath}
  we can rewrite~(\ref{eq:mom_con1}) as
  \begin{align*}
    &2\lambda_n\int_Ix^kQ(x)\left(W(x)\right)^2\left(Y_n(x)\right)^2\dd x
    +\frac{2\lambda_n}{k+1}\int_Ix^{k+1}
    \left(L(x)-\frac{1}{2}Q'(x)\right)
    \left(W(x)\right)^2\left(Y_n(x)\right)^2\dd x\\
    &\quad=O(M_n)\;.
  \end{align*}
  Since $\tau$ is chosen such that $\sqrt{\lambda}_n=O(n^{\tau})$
  as $n\to\infty$, it follows that
  \begin{align*}
    &\int_I
    \frac{x^kQ(x)\left(W(x)\right)^2\left(Y_n(x)\right)^2}
    {M_n\lambda_n}\dd x
    +\frac{1}{k+1}\int_I\frac{x^{k+1}
    \left(L(x)-\frac{1}{2}Q'(x)\right)
    \left(W(x)\right)^2\left(Y_n(x)\right)^2}{M_n\lambda_n}\dd x\\
    &\quad=O\left(\frac{1}{n^{4\tau}}\right)\;.
  \end{align*}
  Exploiting the property that $Q$ is a polynomial of degree at most
  two and that
  $L-\frac{1}{2}Q'$ is a polynomial of degree at most one to obtain expressions
  in terms of the limit moments and using the
  observation that the error term does not contribute as a result of
  \begin{displaymath}
    \lim_{N\to\infty}\sqrt{N}\sum_{n=N+1}^\infty\frac{1}{n^2}=0
    \quad\text{as well as}\quad
    \lim_{N\to\infty}N \sum_{n=N+1}^\infty\frac{1}{n^4}=0\;,
  \end{displaymath}
  we deduce
  the claimed recurrence relation for the limit moments
  $\{m_k:k\in\N_0\}$.
\end{proof}

According to the following proposition, under suitable boundary conditions,
weighted moments
of the function
$x\mapsto 1/\sqrt{P(x)W(x)}$ defined on the interval $I$ satisfy the
same recurrence relation as the limit moments considered above.
\begin{propn}\label{propn:conv_mom}
  Provided that, for all $l\in\N_0$ and for $c\in\{a,b\}$,
  \begin{equation}\label{eq:bound_cond_RQ}
    \lim_{x\to b}\frac{x^{l+1}R(x)}{\sqrt{Q(x)}}
    -\lim_{x\to a}\frac{x^{l+1}R(x)}{\sqrt{Q(x)}}=0\;,
  \end{equation}
  the weighted moments defined by, for $k\in\N_0$ and some constant $C\in\R$,
  \begin{displaymath}
    \widetilde{m}_k=C
    \int_I\frac{x^k\left(W(x)\right)^2}{\sqrt{P(x)W(x)}}\dd x
  \end{displaymath}
  satisfy the recurrence relation
  \begin{displaymath}
    (k+1)Q(0)\widetilde{m}_{k}+
    \left(L(0)+\left(k+\frac{1}{2}\right)Q'(0)\right)
    \widetilde{m}_{k+1}+
    \left(L'(0)+\frac{k}{2}Q''(0)\right)
    \widetilde{m}_{k+2}=0\;.
  \end{displaymath}
\end{propn}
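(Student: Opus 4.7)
The plan is to recognise that this recurrence is itself the identity one obtains by integrating a total derivative over $I$ and invoking the hypothesis~(\ref{eq:bound_cond_RQ}). As a first step, I would simplify the integrand. Since $P(x)=R(x)$ and $W(x)=R(x)/Q(x)$, a direct calculation gives
\begin{displaymath}
  \frac{(W(x))^2}{\sqrt{P(x)W(x)}}=\frac{R(x)}{(Q(x))^{3/2}}\;,
\end{displaymath}
so $\widetilde{m}_k=C\int_I x^kR(x)/(Q(x))^{3/2}\dd x$, and the task reduces to proving that the linear combination of such integrals appearing in the proposition vanishes.

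The core step is the derivative identity
\begin{displaymath}
  \frac{\db}{\db x}\left(\frac{x^{k+1}R(x)}{\sqrt{Q(x)}}\right)
  =\frac{R(x)}{(Q(x))^{3/2}}
  \left((k+1)x^kQ(x)+x^{k+1}L(x)-\frac{1}{2}x^{k+1}Q'(x)\right)\;,
\end{displaymath}
which follows from the product rule together with the defining relation $R'(x)/R(x)=L(x)/Q(x)$ of the integrating factor. Because $Q$ has degree at most two and $L$ is linear, I can expand $Q(x)=Q(0)+Q'(0)x+\tfrac{1}{2}Q''(0)x^2$, $Q'(x)=Q'(0)+Q''(0)x$ and $L(x)=L(0)+L'(0)x$, and then collect powers of $x$ in the polynomial factor. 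A short computation shows that this factor equals exactly
\begin{displaymath}
  (k+1)Q(0)x^k+\left(L(0)+\left(k+\tfrac{1}{2}\right)Q'(0)\right)x^{k+1}
  +\left(L'(0)+\tfrac{k}{2}Q''(0)\right)x^{k+2}\;,
\end{displaymath}
so that the precise combination of monomials appearing in the claimed recurrence is built into the derivative on the left.

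To finish, I integrate both sides over $I=(a,b)$. By the fundamental theorem of calculus, the left-hand side equals the boundary term $\lim_{x\to b}x^{k+1}R(x)/\sqrt{Q(x)}-\lim_{x\to a}x^{k+1}R(x)/\sqrt{Q(x)}$, which is zero by the assumption~(\ref{eq:bound_cond_RQ}) applied with $l=k$. Multiplying through by the constant $C$ then identifies the three integrals on the right-hand side as $\widetilde{m}_k$, $\widetilde{m}_{k+1}$ and $\widetilde{m}_{k+2}$ with exactly the stated coefficients, yielding the recurrence. The only part requiring care is the algebraic matching of coefficients in the previous step, where the half-integer shifts $k+\tfrac{1}{2}$ and $\tfrac{k}{2}$ arise precisely from the contribution $-\tfrac{1}{2}x^{k+1}Q'(x)$ coming from differentiating $1/\sqrt{Q(x)}$; verifying that the two sides agree term by term is the only non-routine piece of bookkeeping, and I anticipate no genuine obstacle beyond this.
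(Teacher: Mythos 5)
Your proposal is correct and is essentially the paper's own argument: the paper integrates $x^k R/\sqrt{Q}$ by parts against the identity $\bigl(R/\sqrt{Q}\bigr)'=R\,Q^{-3/2}\bigl(L-\tfrac{1}{2}Q'\bigr)$ and then expands the polynomials $Q$ and $L-\tfrac{1}{2}Q'$ about $0$, which is the same computation you package as integrating the total derivative of $x^{k+1}R/\sqrt{Q}$ and invoking the boundary hypothesis. The coefficient bookkeeping you describe checks out exactly.
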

\begin{proof}
  Since
  \begin{displaymath}
    \frac{\left(W(x)\right)^2}{\sqrt{P(x)W(x)}}
    =\frac{R(x)}{\left(Q(x)\right)^{\frac{3}{2}}}
    \quad\text{as well as}\quad
    \left(\frac{R(x)}{\sqrt{Q(x)}}\right)'
    =\frac{R(x)}{\left(Q(x)\right)^{\frac{3}{2}}}
    \left(L(x)-\frac{1}{2}Q'(x)\right)\;,
  \end{displaymath}
  integration by parts and the boundary
  condition~(\ref{eq:bound_cond_RQ}) give that, for all
  $k\in\N_0$,
  \begin{align*}
    \int_I\frac{x^kQ(x)\left(W(x)\right)^2}{\sqrt{P(x)W(x)}}\dd x
    =\int_I\frac{x^kR(x)}{\sqrt{Q(x)}}\dd x
    &=-\frac{1}{k+1}\int_I\frac{x^{k+1} R(x)}{\left(Q(x)\right)^{\frac{3}{2}}}
      \left(L(x)-\frac{1}{2}Q'(x)\right)\dd x\\
    &=-\frac{1}{k+1}\int_I
      \frac{x^{k+1}\left(L(x)-\frac{1}{2}Q'(x)\right)\left(W(x)\right)^2}
      {\sqrt{P(x)W(x)}}\dd x\;.
  \end{align*}
  As in the proof of Proposition~\ref{propn:moments}, this yields the
  claimed recurrence relation.
\end{proof}

The second main ingredient for our subsequent analysis
which we establish for general
classical orthogonal polynomial systems is the Christoffel--Darboux type
formula given in Proposition~\ref{propn:CDT}. For its derivation, we
need an extension of
Szeg\H{o}~\cite[Theorem~3.2.1]{szego} to orthogonal
polynomials which are not assumed to be orthonormal. It characterises
two of the three coefficients appearing in the three-term recurrence
relation satisfied by the orthogonal polynomials $\{Y_n: n\in\N_0\}$
in terms of the square $M_n$ of the $L^2(I,W(x)\dd x)$ norm
of $Y_n$ and the leading
coefficient $K_n$ of $Y_n$.
\begin{lemma}\label{lem:3TR}
  Let $\{Y_n: n\in\N_0\}$ be a family of classical orthogonal
  polynomials on the interval $I$ with
  respect to the weight function $W\colon I\to\R$.
  We have the three-term recurrence relation, for
  $n\in\N$ and $x\in I$,
  \begin{displaymath}
    Y_{n+1}(x)=(A_nx+B_n)Y_n(x)-C_nY_{n-1}(x)
  \end{displaymath}
  with constants $A_n,B_n,C_n\in\R$, where
  \begin{displaymath}
    A_n=\frac{K_{n+1}}{K_n}\quad\text{and}\quad
    C_n=\frac{K_{n-1}K_{n+1}}{K_n^2}\frac{M_n}{M_{n-1}}\;.
  \end{displaymath}
\end{lemma}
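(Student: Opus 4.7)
The plan is to follow the standard orthogonality argument for three-term recurrences, carefully tracking the extra scaling that appears because the $Y_n$ are not assumed to be $L^2(I, W(x)\dd x)$-orthonormal. The family $\{Y_k : 0 \le k \le n\}$ forms a basis of the space of polynomials of degree at most $n$, so once I verify that $Y_{n+1}(x) - A_n x Y_n(x)$ has degree at most $n$, I can expand it in this basis and compute the coefficients via integration against $W$.

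First, I would choose $A_n = K_{n+1}/K_n$ so that the degree-$(n+1)$ terms in $Y_{n+1}(x)$ and $A_n x Y_n(x)$ cancel, leaving a polynomial of degree at most $n$. Writing
\begin{displaymath}
  Y_{n+1}(x) - A_n x Y_n(x) = \sum_{k=0}^{n} c_k Y_k(x),
\end{displaymath}
orthogonality yields
\begin{displaymath}
  c_k M_k = \int_I W(x)\bigl(Y_{n+1}(x) - A_n x Y_n(x)\bigr) Y_k(x)\dd x
  = -A_n \int_I W(x) x Y_n(x) Y_k(x)\dd x
\end{displaymath}
for $k \le n$, since $\int_I W Y_{n+1} Y_k \dd x = 0$. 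For $k \le n-2$, the polynomial $x Y_k$ has degree at most $n-1$, so orthogonality of $Y_n$ against lower-degree polynomials forces $c_k = 0$. Hence only $c_{n-1}$ and $c_n$ survive, which I rename $-C_n$ and $B_n$ respectively, giving the asserted recurrence.

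It remains to evaluate $C_n$. The key observation is that $x Y_{n-1}(x) = (K_{n-1}/K_n) Y_n(x) + r(x)$ where $r$ has degree at most $n-1$, so by orthogonality of $Y_n$ against $r$,
\begin{displaymath}
  \int_I W(x) x Y_n(x) Y_{n-1}(x)\dd x = \frac{K_{n-1}}{K_n}\int_I W(x) \bigl(Y_n(x)\bigr)^2 \dd x = \frac{K_{n-1}}{K_n} M_n.
\end{displaymath}
Substituting into the formula for $c_{n-1}$ and dividing by $M_{n-1}$ yields
\begin{displaymath}
  C_n = \frac{A_n}{M_{n-1}}\cdot \frac{K_{n-1}}{K_n} M_n
  = \frac{K_{n+1}}{K_n}\cdot\frac{K_{n-1}}{K_n}\cdot \frac{M_n}{M_{n-1}}
  = \frac{K_{n-1} K_{n+1}}{K_n^2}\cdot \frac{M_n}{M_{n-1}},
\end{displaymath}
which is the claimed identity.

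There is no real obstacle here; the argument is essentially the textbook derivation in Szeg\H{o}~\cite[Theorem~3.2.1]{szego} with the normalisation constants $M_n$ and $M_{n-1}$ carried through explicitly rather than being set to one. The only point requiring a small amount of care is the identification of the leading coefficient of $x Y_{n-1}$ with $K_{n-1}$ (not $K_n$), which is what produces the ratio $K_{n-1}K_{n+1}/K_n^2$ in $C_n$. The coefficient $B_n$ is left unspecified in the statement, which is consistent with the fact that it depends on the first subleading coefficients of the $Y_n$ and is not needed for the later applications.
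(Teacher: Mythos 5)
Your argument is correct and is essentially the same as the paper's: both choose $A_n=K_{n+1}/K_n$ to cancel the leading terms, use orthogonality together with the fact that $xY_m$ has degree $m+1$ to reduce the expansion to $Y_{n-1}$ and $Y_n$, and identify $C_n$ via $\int_I xY_n(x)Y_{n-1}(x)W(x)\dd x=\tfrac{K_{n-1}}{K_n}M_n$. No differences worth noting.
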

\begin{proof}
  Adapting Szeg\H{o}~\cite[Proof of Theorem~3.2.1]{szego}, we
  observe that $Y_{n+1}(x)-A_nxY_n(x)$ defines a polynomial on $I$ of
  degree $n$. Hence, this polynomial can be written as a linear
  combination of $Y_0, Y_1,\dots, Y_n$. Similarly, for all $m\in\N_0$,
  the polynomial defined by $xY_m(x)$ can be expressed as a linear
  combination of $Y_0, Y_1,\dots, Y_{m+1}$.
  By orthogonality, it follows that, as long as
  $0\leq m <n-1$,
  \begin{displaymath}
    \int_I \left(Y_{n+1}(x)-A_nxY_n(x)\right)Y_m(x)W(x)\dd x=0\;.
  \end{displaymath}
  Thus, the polynomial defined by $Y_{n+1}(x)-A_nxY_n(x)$ is a linear
  combination of $Y_{n-1}$ and $Y_n$ only. It remains to identify the
  coefficient in front of $Y_{n-1}$. Using the
  three-term recurrence relation and orthogonality, we deduce that
  \begin{equation}\label{eq:cond4CN}
    A_n\int_IxY_n(x)Y_{n-1}(x)W(x)\dd x-C_nM_{n-1}
    =\int_IY_{n+1}(x)Y_{n-1}(x)W(x)\dd x=0\;.
  \end{equation}
  When expressing the polynomial defined by $xY_{n-1}(x)$ as a
  linear combination of $Y_0, Y_1,\dots, Y_n$, the coefficient in
  front of $Y_n$ is $K_{n-1}/K_n$. We conclude
  \begin{displaymath}
    \int_IxY_n(x)Y_{n-1}(x)W(x)\dd x=\frac{K_{n-1}M_n}{K_n}\;,
  \end{displaymath}
  which together with~(\ref{eq:cond4CN}) yields
  \begin{displaymath}
    C_n=\frac{K_{n-1}K_{n+1}}{K_n^2}\frac{M_n}{M_{n-1}}\;,
  \end{displaymath}
  as required.
\end{proof}
The standard Christoffel--Darboux formula, see
Szeg\H{o}~\cite[Theorem~3.2.2]{szego}, asserts that, for $N\in\N$ and
$x,y\in I$,
\begin{displaymath}
  (x-y)\sum_{n=0}^N\frac{Y_n(x)Y_n(y)}{M_n}
  =\frac{K_N}{K_{N+1}M_N}\left(Y_{N+1}(x)Y_N(y)-Y_N(x)Y_{N+1}(y)\right)\;.
\end{displaymath}
The Christoffel--Darboux type formula stated below
extends the Christoffel--Darboux type
formula, see~\cite[Proposition~5.1]{semicircle}, derived for integrals
of Legendre polynomials. It
enters the analysis in exactly the same way
as~\cite[Proposition~5.1]{semicircle} did in
establishing the convergence away from the diagonal
for~\cite[Theorem~1.5]{semicircle}.
\begin{propn}\label{propn:CDT}
  Let $\{Y_n: n\in\N_0\}$ be a family of classical orthogonal
  polynomials and let $\{\lambda_n: n\in\N_0\}$ be the corresponding
  family of eigenvalues. Setting, for $n\in\N_0$ and $x,y\in I$,
  \begin{displaymath}
    D_{n+1}(x,y)=Y_{n+1}(x)Y_n(y)-Y_n(x)Y_{n+1}(y)\;,
  \end{displaymath}
  we have, for $N\in\N$,
  \begin{align*}
    &(x-y)\sum_{n=1}^N\frac{Y_n(x)Y_n(y)}{M_n\lambda_n}\\
    &\quad =\frac{K_N}{K_{N+1}M_N}\frac{D_{N+1}(x,y)}{\lambda_N}
    -\frac{K_0}{K_1M_0}\frac{D_1(x,y)}{\lambda_1}
    +\sum_{n=1}^{N-1}\frac{K_{n}}{K_{n+1}M_{n}}D_{n+1}(x,y)
      \left(\frac{1}{\lambda_{n}}-\frac{1}{\lambda_{n+1}}\right)\;.
  \end{align*}
\end{propn}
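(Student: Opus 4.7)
The plan is to start from the standard Christoffel--Darboux identity, expressed in its telescoping form term by term, and then apply Abel summation (summation by parts) to trade the dependence on $n$ through the factor $1/\lambda_n$ for a sum of differences $1/\lambda_n-1/\lambda_{n+1}$.

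First I would use Lemma~\ref{lem:3TR} to derive the per-term Christoffel--Darboux identity. Multiplying the three-term recurrence
\begin{displaymath}
  Y_{n+1}(x)=(A_nx+B_n)Y_n(x)-C_nY_{n-1}(x)
\end{displaymath}
by $Y_n(y)$, writing the analogous identity with $x$ and $y$ interchanged, subtracting, and using the explicit form of $A_n$ and $C_n$ from Lemma~\ref{lem:3TR}, one obtains the telescoping relation
\begin{displaymath}
  (x-y)\frac{Y_n(x)Y_n(y)}{M_n}=
  \frac{K_n}{K_{n+1}M_n}D_{n+1}(x,y)-\frac{K_{n-1}}{K_nM_{n-1}}D_n(x,y)\;,
\end{displaymath}
valid for $n\in\N$. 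This is the standard derivation of the usual Christoffel--Darboux formula (as in Szeg\H{o}~\cite[Theorem~3.2.2]{szego}), except that I carry it out without assuming orthonormality, which is why the ratios $K_n/K_{n+1}$ and the quantities $M_n$ appear explicitly.

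Next I would divide both sides by $\lambda_n$ and sum from $n=1$ to $N$. Writing $a_n=\frac{K_n}{K_{n+1}M_n}D_{n+1}(x,y)$ and $b_n=1/\lambda_n$, the right-hand side becomes $\sum_{n=1}^N b_n(a_n-a_{n-1})$, to which I apply Abel summation in the form
\begin{displaymath}
  \sum_{n=1}^N b_n(a_n-a_{n-1})=b_Na_N-b_1a_0+\sum_{n=1}^{N-1}(b_n-b_{n+1})a_n\;.
\end{displaymath}
Substituting back the expressions for $a_n$ and $b_n$ produces exactly the three terms in the stated formula: the boundary term at $N$, the boundary term at $n=1$ (involving $D_1$ and $K_0/(K_1M_0)$), and the sum over $n=1,\dots,N-1$ with the weight $1/\lambda_n-1/\lambda_{n+1}$.

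Since all three ingredients (the three-term recurrence with the constants identified in Lemma~\ref{lem:3TR}, the algebraic telescoping identity, and Abel summation) are elementary, there is no real obstacle; the only point deserving care is bookkeeping the indices so that the lower boundary term comes out as $\frac{K_0}{K_1M_0}\frac{D_1(x,y)}{\lambda_1}$ with the correct sign, which is a matter of tracking which $a_n$ and $b_n$ survive after the rearrangement.
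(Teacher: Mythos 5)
Your proposal is correct and follows essentially the same route as the paper: both derive the telescoping per-term identity $D_{n+1}(x,y)=A_n(x-y)Y_n(x)Y_n(y)+C_nD_n(x,y)$ from the three-term recurrence with the constants of Lemma~\ref{lem:3TR}, divide by $\lambda_n$, and rearrange the resulting sums (your Abel summation is exactly the paper's reindexing of the second sum followed by splitting off the boundary terms). The index bookkeeping you flag works out as claimed, so there is no gap.
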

\begin{proof}
  Since the eigenvalues are known to be mutually distinct and as
  $\lambda_0=0$, the expressions we consider are all well-defined.
  From the three-term recurrence relation given in
  Lemma~\ref{lem:3TR} for the family $\{Y_n: n\in\N_0\}$ of classical
  orthogonal polynomials, it follows that, for $n\in\N$ and for $x,y\in I$,
  \begin{displaymath}
    D_{n+1}(x,y) =
    A_n(x-y)Y_n(x)Y_n(y)+C_n D_n(x,y)\;,
  \end{displaymath}
  which implies that, for $N\in\N$,
  \begin{displaymath}
    (x-y) \sum_{n=1}^N\frac{Y_n(x)Y_n(y)}{M_n\lambda_n}
    =\sum_{n=1}^N\frac{D_{n+1}(x,y)}{A_n M_n\lambda_n} -
    \sum_{n=1}^N\frac{C_nD_n(x,y)}{A_n M_n\lambda_n}\;.
  \end{displaymath}
  Using the expressions for $A_n$ and $C_n$ from Lemma~\ref{lem:3TR},
  we deduce
  \begin{align*}
    &(x-y) \sum_{n=1}^N\frac{Y_n(x)Y_n(y)}{M_n\lambda_n}\\
    &\quad=\sum_{n=1}^{N}\frac{K_{n}}{K_{n+1}M_{n}}
      \frac{D_{n+1}(x,y)}{\lambda_{n}} -
      \sum_{n=0}^{N-1}\frac{K_{n}}{K_{n+1}M_{n}}
      \frac{D_{n+1}(x,y)}{\lambda_{n+1}}\\
    &\quad=\frac{K_N}{K_{N+1}M_N}\frac{D_{N+1}(x,y)}{\lambda_N}
    -\frac{K_0}{K_1M_0}\frac{D_1(x,y)}{\lambda_1}
      +\sum_{n=1}^{N-1}\frac{K_{n}}{K_{n+1}M_{n}}D_{n+1}(x,y)
      \left(\frac{1}{\lambda_{n}}-\frac{1}{\lambda_{n+1}}\right)\;,
  \end{align*}
  as claimed.
\end{proof}
In particular, note that if, for $x,y\in I$, 
\begin{displaymath}
  \lim_{n\to\infty}\frac{K_n}{K_{n+1}M_n}\frac{D_{n+1}(x,y)}{\lambda_n}
  =0\;,
\end{displaymath}
the Christoffel--Darboux type formula yields
\begin{displaymath}
  (x-y)\sum_{n=N+1}^\infty\frac{Y_n(x)Y_n(y)}{M_n\lambda_n}
  =\sum_{n=N}^\infty\frac{K_{n}}{K_{n+1}M_{n}}D_{n+1}(x,y)
  \left(\frac{1}{\lambda_{n}}-\frac{1}{\lambda_{n+1}}\right)
  -\frac{K_N}{K_{N+1}M_N}\frac{D_{N+1}(x,y)}{\lambda_N}\;.
\end{displaymath}
This implication is so powerful in our subsequent
analysis because, as $n\to\infty$,
\begin{displaymath}
  \frac{1}{\lambda_{n}}-\frac{1}{\lambda_{n+1}}
  =O\left(\frac{1}{n^{2\tau+1}}\right)
\end{displaymath}
while $\lambda_n^{-1}=O\left(n^{-2\tau}\right)$, which gives rise to
better estimates for the terms in the second series above than for the
terms in the first series.
Essentially, the
Christoffel--Darboux type formula encompasses a cancellation which is
otherwise missed when bounding summands separately.

Having established two main results for general classical orthogonal
polynomial systems, we now consider the Hermite polynomials, the
associated Laguerre polynomials and the Jacobi polynomials separately,
in that order. The reason for choosing this order is that the proofs
for the Hermite polynomials are the cleanest and already convey the
used strategy well, whilst the proofs for the Jacobi polynomials
contain the most involved expressions.

Throughout the study in the following three subsections, we frequently
use Stirling's formula in the form, for $z\in\R$ as
$z\to\infty$,
\begin{equation}\label{eq:stirling}
  \Gamma(z+1)\sim \sqrt{2\pi z}\left(\frac{z}{\e}\right)^z
\end{equation}
as well as the asymptotics for the Gamma function that, for
$\alpha\in\R$ and as $z\to\infty$,
\begin{equation}\label{eq:Gamma_asymp}
  \Gamma(z+\alpha)\sim \Gamma(z)z^\alpha\;,
\end{equation}
see~\cite[6.1.39]{handbook}. We further need the Legendre duplication
formula~\cite[6.1.18]{handbook} asserting, for $z\in\R$,
\begin{equation}\label{eq:LDF}
  \Gamma(z)\Gamma\left(z+\frac{1}{2}\right)
  =2^{1-2z}\sqp\Gamma(2z)\;.
\end{equation}

\subsection{Hermite polynomials}
\label{sec:hermite}

The Hermite polynomials $\{H_n:n\in\N_0\}$ are the classical orthogonal
polynomials on the interval $I=\R$ with respect to the weight function
$W\colon \R\to\R$ given by
\begin{displaymath}
  W(x)=\exp\left(-x^2\right)
\end{displaymath}
subject to the normalisations, for $n\in\N_0$,
\begin{displaymath}
  M_n=\int_{-\infty}^\infty \e^{-x^2}\left(H_n(x)\right)^2\dd x
  =2^nn!\sqrt{\pi}
  \quad\text{and}\quad
  K_n=2^n\;,
\end{displaymath}
see Szeg\H{o}~\cite[Chapter~5.5]{szego}. These Hermite polynomials are
also referred to as the physicist Hermite polynomials to distinguish
them from the probabilist Hermite polynomials which are orthogonal on
$\R$ with respect to the weight function defined by
$\exp\left(-\frac{1}{2}x^2\right)$.

As discussed by Littlejohn and Krall in~\cite{littlekrall}, the
Hermite polynomials solve the differential equation, for $n\in\N_0$
and $x\in\R$,
\begin{displaymath}
  H_n''(x)-2xH_n'(x)+2nH_n(x)=0\;,
\end{displaymath}
whose Sturm--Liouville form is
\begin{equation}\label{eq:H_SL}
  \left(\e^{-x^2} H_n'(x)\right)'=-2n \e^{-x^2} H_n(x)\;.
\end{equation}
In particular, using the notations introduced previously, we have
\begin{displaymath}
  Q(x)=1\;,\quad
  L(x)=-2x\;,\quad
  P(x)=W(x)=\exp\left(-x^2\right)
  \quad\text{and}\quad
  \lambda_n=2n\;.
\end{displaymath}
For our analysis, we need the two identities, for $n\in\N$,
\begin{equation}\label{eq:H_diff}
  H_n'(x)=2nH_{n-1}(x)
\end{equation}
and
\begin{equation}\label{eq:H_recdiff}
  H_n(x)=2xH_{n-1}(x)-H_{n-1}'(x)\;,
\end{equation}
see~\cite[5.5.10]{szego}, giving rise to the
three-term recurrence relation
\begin{equation}\label{eq:H_rec}
  H_{n+1}(x)=2xH_n(x)-2nH_{n-1}(x)\;,
\end{equation}
which is consistent with Lemma~\ref{lem:3TR}.
We further rely on the asymptotic formula that,
for $x\in\R$ and as $n\to\infty$,
\begin{equation}\label{eq:H_asymp}
  H_n(x)=\e^{\frac{1}{2}x^2}\frac{2^n}{\sqrt{\pi}}
  \Gamma\left(\frac{n+1}{2}\right)
  \cos\left(x\sqrt{2n}-\frac{n\pi}{2}\right)
  \left(1+O\left(\frac{1}{\sqrt{n}}\right)\right)\;,
\end{equation}
a consequence of~\cite[13.5.16 and 13.6.38]{handbook}, where the
bound on the error term is uniform on every finite real interval.
By the Legendre
duplication formula~(\ref{eq:LDF}), we have
\begin{displaymath}
  \Gamma\left(\frac{n+1}{2}\right)
  \Gamma\left(\frac{n}{2}+1\right)
  =2^{-n}\sqp\Gamma(n+1)\;,
\end{displaymath}
which together with Stirling's formula~(\ref{eq:stirling}) implies that, as
$n\to\infty$, 
\begin{displaymath}
  \frac{2^n}{\sqrt{\pi}}
  \Gamma\left(\frac{n+1}{2}\right)
  \sim \sqrt{2}\left(\frac{2n}{\e}\right)^{\frac{n}{2}}\;.
\end{displaymath}
Thus, for all $K\in\R$ with $K>0$, there exists a positive constant $C\in\R$
such that, for all $n\in\N_0$ and for all $x\in[-K,K]$,
\begin{equation}\label{eq:H_bound}
  \left|H_n(x)\right|
  \leq C \left(\frac{2n}{\e}\right)^{\frac{n}{2}}\;.
\end{equation}
Moreover, we observe that due to the exponential decay in
$\exp\left(-x^2\right)$ as $x\to\pm\infty$, for all $l,n\in\N_0$,
\begin{equation}\label{eq:H_bc}
  \lim_{x\to\pm\infty}x^l\e^{-x^2}H_n(x)
  =\lim_{x\to\pm\infty}x^l\e^{-x^2}H_n'(x)=0\;.
\end{equation}

The first lemma in this subsection is
later needed to verify that the limit moments on the diagonal
satisfy the correct initial condition.
\begin{lemma}\label{lem:H_initial}
  For all $n\in\N_0$, we have
  \begin{displaymath}
    \int_{-\infty}^\infty\e^{-2x^2}\left(H_n(x)\right)^2\dd x
    =2^{n-\frac{1}{2}}\,\Gamma\left(n+\frac{1}{2}\right)\;.
  \end{displaymath}
\end{lemma}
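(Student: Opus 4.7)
The plan is to reduce the integral to one involving a single Hermite polynomial and then evaluate the reduction using the standard generating function. From the Rodrigues representation $H_n(x) = (-1)^n\e^{x^2}(\db^n/\db x^n)\e^{-x^2}$, we have
$$\e^{-2x^2}(H_n(x))^2 = \left(\frac{\db^n}{\db x^n}\e^{-x^2}\right)^2,$$
so the integral in question equals $\int_{-\infty}^\infty(\db^n\e^{-x^2}/\db x^n)^2\dd x$. Integrating by parts $n$ times, where every boundary term vanishes on account of the super-exponential decay of $\e^{-x^2}$ and all of its derivatives at $\pm\infty$, produces
$$\int_{-\infty}^\infty\e^{-2x^2}(H_n(x))^2\dd x = (-1)^n\int_{-\infty}^\infty \e^{-x^2}\,\frac{\db^{2n}\e^{-x^2}}{\db x^{2n}}\dd x = (-1)^n\int_{-\infty}^\infty\e^{-2x^2}H_{2n}(x)\dd x,$$
where I use Rodrigues again to identify $\db^{2n}\e^{-x^2}/\db x^{2n} = \e^{-x^2}H_{2n}(x)$.

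I would then compute the remaining integral using the standard Hermite generating function $\sum_k H_k(x)t^k/k! = \exp(2xt - t^2)$. Multiplying by $\e^{-2x^2}$, integrating in $x$ termwise, and completing the square in $x$ on the right-hand side gives
$$\sum_{k=0}^\infty \frac{t^k}{k!}\int_{-\infty}^\infty \e^{-2x^2}H_k(x)\dd x = \sqrt{\pi/2}\,\exp\left(-t^2/2\right) = \sqrt{\pi/2}\sum_{m=0}^\infty \frac{(-1)^m t^{2m}}{2^m m!}.$$
Matching coefficients shows that the integral vanishes for odd $k$, while for $k = 2n$ it equals $(-1)^n(2n)!\sqrt{\pi/2}/(2^n n!)$. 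Substituting back cancels the external $(-1)^n$ and leaves
$$\int_{-\infty}^\infty\e^{-2x^2}(H_n(x))^2\dd x = \sqrt{\pi/2}\cdot\frac{(2n)!}{2^n\,n!}.$$

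To rewrite this in the claimed form, I would invoke the Legendre duplication formula~(\ref{eq:LDF}) at $z = n + \tfrac{1}{2}$, obtaining $\Gamma(n+\tfrac{1}{2})\,n! = 2^{-2n}\sqp(2n)!$, or equivalently $(2n)!/(2^n n!) = 2^n\Gamma(n+\tfrac{1}{2})/\sqp$. Substituting this into the previous display yields $2^{n-1/2}\Gamma(n+\tfrac{1}{2})$, as required.

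The main obstacle is really only bookkeeping: two factors of $(-1)^n$ must cancel cleanly, one arising from the $n$-fold integration by parts and one from the coefficient extraction in the generating function expansion. Everything else reduces to a single Gaussian integral together with one application of the duplication formula, tools already employed elsewhere in the paper.
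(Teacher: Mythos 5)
Your proof is correct, but it takes a genuinely different route from the paper's. You use the Rodrigues representation to write $\e^{-2x^2}(H_n(x))^2=\bigl(\tfrac{\db^n}{\db x^n}\e^{-x^2}\bigr)^2$, integrate by parts $n$ times to reduce to $(-1)^n\int_{-\infty}^\infty\e^{-2x^2}H_{2n}(x)\dd x$, and evaluate that by the generating function $\sum_k H_k(x)t^k/k!=\exp(2xt-t^2)$; the two signs cancel and the duplication formula at $z=n+\tfrac12$ converts $\sqrt{\pi/2}\,(2n)!/(2^nn!)$ into $2^{n-\frac12}\Gamma(n+\tfrac12)$ exactly as claimed (I checked the coefficient extraction and the square-completion; both are right, and the $n=0,1$ cases agree). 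The paper instead never leaves the Sturm--Liouville framework: it uses the differential equation $(\e^{-x^2}H_n')'=-2n\e^{-x^2}H_n$, the identities $H_n'=2nH_{n-1}$ and $H_n=2xH_{n-1}-H_{n-1}'$, and integration by parts to derive the three-term recurrence $\int\e^{-2x^2}H_{n+1}^2=(n+1)\int\e^{-2x^2}H_n^2+n(2n-1)\int\e^{-2x^2}H_{n-1}^2$, then concludes by induction from the $n=0,1$ base cases. Your argument is shorter and more self-contained as a computation, and it yields the intermediate identity $\int\e^{-2x^2}H_{2n}\dd x=(-1)^n\sqrt{\pi/2}\,(2n)!/(2^nn!)$ as a bonus; the paper's argument is deliberately structural, illustrating the same exploit-the-ODE technique used throughout Section~\ref{sec:COP}, which is why it is presented that way. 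The only point you should make explicit is the justification for termwise integration of the generating function against $\e^{-2x^2}$ (e.g.\ absolute convergence of $\sum_k|t|^k\int\e^{-2x^2}|H_k(x)|\dd x/k!$ for small $|t|$, or dominated convergence), but this is routine.
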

\begin{proof}
  The Sturm--Liouville differential equation~(\ref{eq:H_SL}) yields,
  for $n\in\N_0$ and $x\in\R$,
  \begin{displaymath}
    \left(\e^{-2x^2}\left(H_n'(x)\right)^2\right)'
    =-4n\e^{-2x^2}H_n(x)H_n'(x)\;.
  \end{displaymath}
  Integrating by parts as well as using~(\ref{eq:H_bc}), the
  identity~(\ref{eq:H_diff}) and the recurrence
  relation~(\ref{eq:H_rec}), we obtain, for $n\in\N$,
  \begin{align*}
    \int_{-\infty}^\infty\e^{-2x^2}\left(H_n'(x)\right)^2\dd x
    &=2n\int_{-\infty}^\infty \e^{-2x^2}2x H_n(x)H_n'(x)\dd x\\
    &=4n^2\int_{-\infty}^\infty \e^{-2x^2}
      \left(H_{n+1}(x)+2nH_{n-1}(x)\right)H_{n-1}(x)\dd x\;.
  \end{align*}
  Applying~(\ref{eq:H_diff}) to the left hand side above
  implies that
  \begin{equation}\label{eq:H_help1}
    (1-2n) \int_{-\infty}^\infty\e^{-2x^2}\left(H_{n-1}(x)\right)^2\dd x
    =\int_{-\infty}^\infty\e^{-2x^2}H_{n+1}(x)H_{n-1}(x)\dd x\;.
  \end{equation}
  On the other hand, by using~(\ref{eq:H_diff}), integrating by
  parts and noting~(\ref{eq:H_bc}), and concluding with
  (\ref{eq:H_diff}) as well as (\ref{eq:H_recdiff}), we can argue, for
  $n\in\N$,
  \begin{align*}
    2(n+1)\int_{-\infty}^\infty\e^{-2x^2}\left(H_n(x)\right)^2\dd x
    &=\int_{-\infty}^\infty\e^{-2x^2}H_n(x)H_{n+1}'(x)\dd x\\
    &=\int_{-\infty}^\infty\e^{-2x^2}\left(
      4xH_n(x)-H_n'(x)\right)H_{n+1}(x)\dd x\\
    &=\int_{-\infty}^\infty\e^{-2x^2}\left(
      2H_{n+1}(x)+2n H_{n-1}(x)\right)H_{n+1}(x)\dd x\;.
  \end{align*}
  Putting this together with~(\ref{eq:H_help1}) gives, for $n\in\N$,
  \begin{align*}
    &\int_{-\infty}^\infty\e^{-2x^2}\left(H_{n+1}(x)\right)^2\dd x\\
    &\qquad=(n+1)\int_{-\infty}^\infty\e^{-2x^2}\left(H_n(x)\right)^2\dd x
    +n(2n-1)
    \int_{-\infty}^\infty\e^{-2x^2}\left(H_{n-1}(x)\right)^2\dd x\;,
  \end{align*}
  which is a recurrence relation for the integrals we wish to
  evaluate. Since
  \begin{align*}
    \int_{-\infty}^\infty\e^{-2x^2}\left(H_0(x)\right)^2\dd x
    &=\int_{-\infty}^\infty\e^{-2x^2}\dd x=\sqrt{\frac{\pi}{2}}
    =\frac{\Gamma\left(\frac{1}{2}\right)}{\sqrt{2}}\;,\\
    \int_{-\infty}^\infty\e^{-2x^2}\left(H_1(x)\right)^2\dd x
    &=\int_{-\infty}^\infty\e^{-2x^2}4x^2\dd x=\sqrt{\frac{\pi}{2}}
    =\sqrt{2}\,\Gamma\left(\frac{3}{2}\right)\;,
  \end{align*}
  and, for $n\in\N$,
  \begin{displaymath}
    2(n+1)\,\Gamma\left(n+\frac{1}{2}\right)
    +n(2n-1)\,\Gamma\left(n-\frac{1}{2}\right)
    =2\left(2n+1\right)\Gamma\left(n+\frac{1}{2}\right)
    =4\,\Gamma\left(n+\frac{3}{2}\right)\;,
  \end{displaymath}
  the claimed result follows by induction.
\end{proof}
The next two lemmas feed into the Arzel\`{a}--Ascoli theorem to deduce
a locally uniform convergence on the diagonal, which allows us to
establish the existence of a pointwise limit and to identify it
from the moment analysis.
\begin{lemma}\label{lem:H_unifbound1}
  Fix $K\in\R$ with $K>0$. The family
  \begin{displaymath}
    \left\{\sqrt{N}\sum_{n=N+1}^\infty
      \frac{H_n(x)H_n(y)}{2n\, 2^nn!}:
      N\in\N\text{ and }x,y\in [-K,K]\right\}
  \end{displaymath}
  is uniformly bounded.
\end{lemma}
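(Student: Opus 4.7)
The natural approach is to bound each summand of the tail series uniformly on $[-K,K]^2$ by a constant multiple of $n^{-3/2}$, and then compare the tail sum with a convergent integral. The prefactor analysis relies on the Legendre duplication formula together with Stirling's formula, while the oscillatory factor is immediately bounded by $1$.

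First, I would apply the Legendre duplication formula~(\ref{eq:LDF}) with $z=(n+1)/2$ to obtain
\[
\Gamma\!\left(\tfrac{n+1}{2}\right)\Gamma\!\left(\tfrac{n}{2}+1\right)=2^{-n}\sqp\,n!\;,
\]
and combine this with Stirling's formula~(\ref{eq:stirling}) applied to $\Gamma(n/2+1)$ to deduce that
\[
\frac{2^n\,\Gamma((n+1)/2)^2}{\pi\,n!}=\frac{n!}{2^n\,\Gamma(n/2+1)^2}\sim\sqrt{\frac{2}{\pi n}}
\]
as $n\to\infty$. Substituting the uniform asymptotic~(\ref{eq:H_asymp}) for both $H_n(x)$ and $H_n(y)$ on the compact interval $[-K,K]$ into the summand, and using $|\cos|\leq 1$ together with $\e^{(x^2+y^2)/2}\leq\e^{K^2}$, I then expect to obtain a constant $C_K>0$ such that
\[
\left|\frac{H_n(x)H_n(y)}{2n\cdot 2^nn!}\right|\leq \frac{C_K}{n^{3/2}}
\]
for all $n\in\N$ and all $x,y\in[-K,K]$. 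The tail of $\sum n^{-3/2}$ is bounded above by $2/\sqrt{N}$ by comparison with $\int_N^\infty r^{-3/2}\dd r$, so multiplying by $\sqrt{N}$ yields
\[
\sqrt{N}\sum_{n=N+1}^\infty\left|\frac{H_n(x)H_n(y)}{2n\cdot 2^nn!}\right|\leq 2C_K\;,
\]
independent of $N$, $x$ and $y$, which is the claim.

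The one point requiring care is that the asymptotic formula~(\ref{eq:H_asymp}) must be applied uniformly in the two variables $x$ and $y$ simultaneously; this presents no real obstacle since the excerpt explicitly states that the error term in~(\ref{eq:H_asymp}) is uniform on every finite real interval, and so the product of the two asymptotic expansions carries an error term uniform on $[-K,K]^2$. A Christoffel--Darboux type argument via Proposition~\ref{propn:CDT} is not needed for the present uniform boundedness---it will be crucial only for the more delicate off-diagonal analysis later---so the termwise asymptotic estimate above suffices.
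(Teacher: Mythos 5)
Your proposal is correct and follows essentially the same route as the paper: the paper first packages the asymptotic formula~(\ref{eq:H_asymp}), the Legendre duplication formula and Stirling's formula into the uniform bound~(\ref{eq:H_bound}) on $|H_n(x)|$, and then in the proof of the lemma combines it with Stirling once more to get the termwise estimate $|H_n(x)H_n(y)/(2n\,2^nn!)|\leq Dn^{-3/2}$ on $[-K,K]^2$, exactly as you do, before concluding by the same integral comparison of the tail. The only cosmetic difference is that you substitute the asymptotic expansion directly into the product rather than passing through the intermediate bound~(\ref{eq:H_bound}).
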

\begin{proof}
  According to the bound~(\ref{eq:H_bound}), there exists a
  constant $C\in\R$
  such that, for all $n\in\N$ and for all $x,y\in [-K,K]$,
  \begin{displaymath}
    \left|\frac{H_n(x)H_n(y)}{2n\, 2^nn!}\right|
    \leq \frac{C^2}{2n\, 2^nn!}\left(\frac{2n}{\e}\right)^n\;.
  \end{displaymath}
  By Stirling's formula~(\ref{eq:stirling}), we have, as $n\to\infty$,
  \begin{displaymath}
    \frac{1}{2^nn!}\left(\frac{2n}{\e}\right)^n
    =\frac{1}{n!}\left(\frac{n}{\e}\right)^n
    \sim\frac{1}{\sqrt{2\pi n}}\;.
  \end{displaymath}
  Hence, there exists a positive constant $D\in\R$ such that, for all
  $n\in\N$ and for all $x,y\in [-K,K]$,
  \begin{equation}\label{eq:H_interbound}
    \left|\frac{H_n(x)H_n(y)}{2n\, 2^nn!}\right|
    \leq D n^{-\frac{3}{2}}\;,
  \end{equation}
  which implies that, for all $N\in\N$,
  \begin{displaymath}
    \left|\sqrt{N}\sum_{n=N+1}^\infty
      \frac{H_n(x)H_n(y)}{2n\, 2^nn!}\right|
    \leq \sqrt{N}\sum_{n=N+1}^\infty D n^{-\frac{3}{2}}
    \leq D\sqrt{N}\left(N^{-\frac{3}{2}}+\int_N^\infty
      z^{-\frac{3}{2}}\dd z\right)
    \leq 3D\;,
  \end{displaymath}
  as required.
\end{proof}
\begin{lemma}\label{lem:H_unifbound2}
  Fix $K\in\R$ with $K>0$. The family
  \begin{displaymath}
    \left\{\sqrt{N}\sum_{n=N+1}^\infty
      \frac{H_n(x)H_{n-1}(x)}{2^nn!}:
      N\in\N\text{ and }x\in[-K,K]\right\}
  \end{displaymath}
  is uniformly bounded.
\end{lemma}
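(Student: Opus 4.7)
The plan is to refine the argument used in the proof of Lemma~\ref{lem:H_unifbound1}. There the pointwise bound~(\ref{eq:H_bound}) alone sufficed because the extra factor $1/(2n)$ coming from the eigenvalue pushed the summands to order $n^{-3/2}$; in the present setting that factor is absent and the same bound only delivers summands of order $n^{-1}$, which is not summable. I would therefore abandon the crude bound and instead work directly with the oscillatory asymptotic formula~(\ref{eq:H_asymp}), extracting cancellation between consecutive terms.

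Substituting~(\ref{eq:H_asymp}) for both $H_n(x)$ and $H_{n-1}(x)$ and collapsing the resulting $\Gamma\!\left(\frac{n+1}{2}\right)\Gamma\!\left(\frac{n}{2}\right)$ factor by means of the Legendre duplication formula~(\ref{eq:LDF}) in the form $\Gamma\!\left(\frac{n+1}{2}\right)\Gamma\!\left(\frac{n}{2}+1\right)=2^{-n}\sqp n!$, I expect to obtain, uniformly in $x\in[-K,K]$,
\begin{displaymath}
  \frac{H_n(x)H_{n-1}(x)}{2^nn!}
  =\frac{\e^{x^2}}{\sqp n}
  \cos\left(x\sqrt{2n}-\frac{n\pi}{2}\right)
  \cos\left(x\sqrt{2n-2}-\frac{(n-1)\pi}{2}\right)
  \left(1+O\left(\frac{1}{\sqrt{n}}\right)\right).
\end{displaymath}
A product-to-sum expansion then splits the product of cosines into two pieces. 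The piece coming from the phase difference equals $\sin(x(\sqrt{2n}-\sqrt{2n-2}))$ and is of size $O(1/\sqrt{n})$ on $[-K,K]$, while the piece coming from the phase sum reduces to $-(-1)^n\sin(x(\sqrt{2n}+\sqrt{2n-2}))$ after using $\cos(\theta-n\pi+\pi/2)=-(-1)^n\sin\theta$. Collecting errors I would conclude
\begin{displaymath}
  \frac{H_n(x)H_{n-1}(x)}{2^nn!}=
  -\frac{(-1)^n\e^{x^2}}{2\sqp n}
  \sin\bigl(x(\sqrt{2n}+\sqrt{2n-2})\bigr)+
  O\!\left(\frac{1}{n^{3/2}}\right)\;,
\end{displaymath}
uniformly on $[-K,K]$.

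The $O(n^{-3/2})$ remainder has tail $O(N^{-1/2})$ and therefore stays bounded after multiplication by $\sqrt{N}$. For the remaining alternating oscillatory main term I would pair consecutive summands $n$ and $n+1$: using $|\sin\alpha-\sin\beta|\leq|\alpha-\beta|$ together with the uniform estimate
\begin{displaymath}
  x\bigl(\sqrt{2(n+1)}+\sqrt{2n}\bigr)-x\bigl(\sqrt{2n}+\sqrt{2n-2}\bigr)
  =x\bigl(\sqrt{2(n+1)}-\sqrt{2n-2}\bigr)=O\!\left(\frac{1}{\sqrt{n}}\right)
\end{displaymath}
on $[-K,K]$, each pair contributes $O(n^{-3/2})$, so that the tail of the series from $N+1$ to $\infty$ is again $O(N^{-1/2})$ and the factor $\sqrt{N}$ yields the desired uniform bound. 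The main subtlety I anticipate is book-keeping all the error estimates, and in particular the estimate on the phase increment used in the pairing step, uniformly in $x$; this is precisely where the compactness of $[-K,K]$ is essential.
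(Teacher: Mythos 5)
Your argument is correct, but it is a genuinely different route from the paper's. The paper never touches the asymptotic formula~(\ref{eq:H_asymp}) here: it uses the three-term recurrence~(\ref{eq:H_rec}) to rewrite $H_nH_{n-1}$ in terms of $2x H_n^2-H_nH_{n+1}$, then performs a telescoping-like reindexing (its identity~(\ref{eq:H_help2})) so that the problematic series reappears with an extra factor of $n-1$ in the denominator; after that the crude bound~(\ref{eq:H_bound}) and Stirling's formula suffice. You instead substitute~(\ref{eq:H_asymp}) for both factors, collapse the Gamma factors via the duplication formula (your constant $\e^{x^2}/(\sqp n)$ checks out, using $\Gamma(\tfrac{n}{2})=\tfrac{2}{n}\Gamma(\tfrac{n}{2}+1)$), split the product of cosines, observe that the phase-difference term is already $O(n^{-1/2})$, and extract the cancellation in the remaining alternating term $-(-1)^n\sin(x(\sqrt{2n}+\sqrt{2n-2}))/(2\sqp n)$ by pairing consecutive summands — effectively a summation by parts against the sign $(-1)^n$. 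The pairing step is legitimate since the individual terms tend to zero, each pair is $O(Kn^{-3/2})$ uniformly on $[-K,K]$, and the tail is therefore $O(N^{-1/2})$. The trade-off: your approach needs the full oscillatory asymptotics (and, strictly, the additive reading of the error term in~(\ref{eq:H_asymp}), since the multiplicative form degenerates at zeros of the cosine), whereas the paper's recurrence argument is more elementary and is the template it reuses for the associated Laguerre and Jacobi cases, where the analogue of your explicit phase bookkeeping would be messier; conversely, your method makes the source of the cancellation (the alternating phase shift of $\pi/2$ per step) completely explicit, in the spirit of how the paper later exploits the Darboux formula for Jacobi polynomials.
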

\begin{proof}
  Using the recurrence relation~(\ref{eq:H_rec}), we can rewrite
  \begin{displaymath}
    \sum_{n=N+1}^\infty
    \frac{H_n(x)H_{n-1}(x)}{2^nn!}
    =\sum_{n=N+1}^\infty
    \frac{2x\left(H_n(x)\right)^2 - H_n(x)H_{n+1}(x)}{2n\, 2^nn!}\;,
  \end{displaymath}
  which implies that
  \begin{equation}\label{eq:H_help2}
    \sum_{n=N+1}^\infty
    \frac{H_n(x)H_{n-1}(x)}{2^n(n-1)!}
    \left(\frac{1}{n}+\frac{1}{n-1}\right)
    =\sum_{n=N+1}^\infty
    \frac{2x\left(H_n(x)\right)^2}{2n\, 2^nn!}
    +\frac{H_N(x)H_{N+1}(x)}{2N\, 2^{N}N!}\;.
  \end{equation}
  Since, for $n\in\N$ with $n\geq 2$,
  \begin{displaymath}
    \frac{1}{n}+\frac{1}{n-1}=\frac{1}{n}\left(2+\frac{1}{n-1}\right)\;,
  \end{displaymath}
  we deduce that
  \begin{displaymath}
    2\sum_{n=N+1}^\infty
    \frac{H_n(x)H_{n-1}(x)}{2^nn!}
    =\sum_{n=N+1}^\infty
    \frac{2x\left(H_n(x)\right)^2}{2n\, 2^nn!}
    -\sum_{n=N+1}^\infty\frac{H_n(x)H_{n-1}(x)}{(n-1)\,2^nn!}
    +\frac{H_N(x)H_{N+1}(x)}{2N\, 2^{N}N!}\;.
  \end{displaymath}
  Crucially, thanks to the telescoping-like rearrangement
  in~(\ref{eq:H_help2}), the terms of the third series in the above
  identity have an additional factor of $n-1$ in the denominator
  compared to the terms of the series we are interested in. As a
  result, we can use the bound~(\ref{eq:H_bound}) to
  conclude. For all $n\in\N$ with $n\geq 2$ and for all
  $x\in[-K,K]$, we have
  \begin{displaymath}
    \left|\frac{H_n(x)H_{n-1}(x)}{(n-1)\,2^nn!}\right|
    \leq \frac{C^2}{(n-1)\,2^nn!}
    \left(\frac{2n}{\e}\right)^{\frac{n}{2}}
    \left(\frac{2(n-1)}{\e}\right)^{\frac{n-1}{2}}\;.
  \end{displaymath}
  From Stirling's formula~(\ref{eq:stirling}), it follows that there
  exists $D\in\R$ such that, for all $n\in\N$ with $n\geq 2$ and for all
  $x\in[-K,K]$,
  \begin{displaymath}
    \left|\frac{H_n(x)H_{n-1}(x)}{(n-1)\,2^nn!}\right|
    \leq Dn^{-2}\;.
  \end{displaymath}
  Moreover, we can choose $D\in\R$ such that, for all
  $N\in\N$ and for all $x\in[-K,K]$, 
  \begin{displaymath}
    \left|\frac{H_N(x)H_{N+1}(x)}{2N\, 2^{N}N!}\right|
    \leq D N^{-1}\;.
  \end{displaymath}
  This together with the bound~(\ref{eq:H_interbound}) obtained in the
  proof of Lemma~\ref{lem:H_unifbound1} implies that
  \begin{displaymath}
    \left|\sqrt{N}
      \sum_{n=N+1}^\infty\frac{H_n(x)H_{n-1}(x)}{2^nn!}
    \right|
    \leq \sqrt{N}\left(
      \sum_{n=N+1}^\infty DK n^{-\frac{3}{2}}+
      \sum_{n=N+1}^\infty Dn^{-2}+
      D N^{-1}
    \right)\leq 3D(1+K)\;,
  \end{displaymath}
  which yields the claimed result.
\end{proof}

Putting everything together, we can determine the asymptotic error in
approximating the Green's function associated with the
Hermite polynomials by truncating the bilinear expansion.
\begin{proof}[Proof of Theorem~\ref{thm:hermite}]
  We start by proving the claimed convergence on the diagonal.
  For $N\in\N_0$, let $S_N\colon \R\to\R$ be defined by
  \begin{displaymath}
    S_N(x)=\sqrt{N}\sum_{n=N+1}^\infty
    \frac{\left(H_n(x)\right)^2}{2n\,2^nn!\sqrt{\pi}}\;.
  \end{displaymath}
  We have, for all $x\in\R$, 
  \begin{displaymath}
    S_N'(x)=\sqrt{N}\sum_{n=N+1}^\infty
    \frac{H_n(x)H_{n-1}(x)}{2^{n-1}n!\sqrt{\pi}}
  \end{displaymath}
  due to~(\ref{eq:H_diff}) and since the series converges uniformly on
  finite intervals by the estimates given in the proof of
  Lemma~\ref{lem:H_unifbound2}.
  Given Lemma~\ref{lem:H_unifbound1},
  Lemma~\ref{lem:H_unifbound2} and the Arzel\`{a}--Ascoli theorem,
  it remains to establish the convergence of moments to conclude that
  the functions $S_N$ converge pointwise on $\R$
  as $N\to\infty$ to the claimed limit function.
  The symmetry
  of the Hermite polynomials, implied by the
  analogue~\cite[5.5.3]{szego} of the Rodrigues
  formula, shows that, for all $k\in\N_0$ and
  for all $N\in\N_0$,
  \begin{displaymath}
    \int_{-\infty}^\infty x^{2k+1}\e^{-2x^2}S_N(x)\dd x=0\;,
  \end{displaymath}
  which is consistent with the claimed limit function being
  even. Moreover, for the even weighted moments and in terms of the
  limit moments considered in Proposition~\ref{propn:moments}, we have
  \begin{displaymath}
    m_{2k}=\lim_{N\to\infty}
    \int_{-\infty}^\infty x^{2k}\e^{-2x^2}S_N(x)\dd x\;,
  \end{displaymath}
  where we can interchange summation and integration because all terms
  are non-negative. Since, for $l\in\N_0$ fixed, the function
  \begin{displaymath}
    x\mapsto \left(\left(x^l\e^{-x^2}\right)'\e^{-x^2}\right)'\e^{x^2}
  \end{displaymath}
  is bounded on $\R$ and using~(\ref{eq:H_bc}), we see that, as $n\to\infty$,
  \begin{displaymath}
    \int_{-\infty}^\infty\left(x^l
      \e^{-x^2}\right)'\e^{-x^2}H_n(x)H'_n(x)\dd x
    =-\frac{1}{2}
    \int_{-\infty}^\infty\left(\left(x^l\e^{-x^2}\right)'\e^{-x^2}\right)'
    \left(H_n(x)\right)^2\dd x=O\left(M_n\right)\;.
  \end{displaymath}
  This together with~(\ref{eq:H_bc}) shows that we can employ
  Proposition~\ref{propn:moments} to deduce that the even limit moments
  satisfy the recurrence relation
  \begin{displaymath}
    m_{2k+2}=\frac{k+1}{2}m_{2k}\;,
  \end{displaymath}
  which, according to Proposition~\ref{propn:conv_mom} and since
  $R(x)=\exp\left(-x^2\right)$ as well as $Q(x)=1$, is the same
  recurrence relation as satisfied by the even weighted moments
  \begin{displaymath}
    \widetilde{m}_{2k}=\frac{1}{\sqrt{2}\pi}
    \int_{-\infty}^\infty x^{2k}\e^{-x^2}\dd x\;.
  \end{displaymath}
  It suffices to prove $m_0=\widetilde{m}_{0}$ to conclude that
  the functions $S_N$ converge pointwise as $N\to\infty$ to the
  claimed limit function.
  Applying Lemma~\ref{lem:H_initial} and the
  asymptotics~(\ref{eq:Gamma_asymp}), we obtain
  \begin{displaymath}
    m_0=\lim_{N\to\infty}\sqrt{N}\sum_{n=N+1}^\infty
    \frac{2^{n-\frac{1}{2}}}{2n\,2^nn!\sqrt{\pi}}\,
    \Gamma\left(n+\frac{1}{2}\right)
    =\lim_{N\to\infty}\sqrt{N}\sum_{n=N+1}^\infty
    \frac{1}{2n\sqrt{2n\pi}}=\frac{1}{\sqrt{2\pi}}\;,
  \end{displaymath}
  and a direct computation gives
  \begin{displaymath}
    \widetilde{m}_0
    =\frac{1}{\sqrt{2}\pi}\int_{-\infty}^\infty\e^{-x^2}\dd x
    =\frac{1}{\sqrt{2\pi}}\;.
  \end{displaymath}
  It remains to establish the off-diagonal convergence, which is a
  consequence of Proposition~\ref{propn:CDT} and the
  bound~(\ref{eq:H_bound}). Due to the estimate~(\ref{eq:H_bound}) and
  Stirling's
  formula~(\ref{eq:stirling}), for $x,y\in\R$ fixed, there exists a
  positive constant $D\in\R$ such that, for all
  $n\in\N$,
  \begin{equation}\label{eq:H_diffbound}
    \left|
      \frac{H_{n+1}(x)H_n(y)-H_n(x)H_{n+1}(y)}
      {2n\,2^{n+1}n!}
    \right|\leq Dn^{-1}\;.
  \end{equation}
  Thus, applying Proposition~\ref{propn:CDT} to the Hermite
  polynomials and using
  \begin{displaymath}
    \frac{1}{\lambda_{n}}-\frac{1}{\lambda_{n+1}}
    =\frac{1}{2n}-\frac{1}{2(n+1)}
    =\frac{1}{2n(n+1)}\;,
  \end{displaymath}
  we have that, for all $N\in\N$,
  \begin{displaymath}
    (x-y)\sum_{n=N+1}^\infty
    \frac{H_n(x)H_n(y)}{2n\,2^nn!\sqrt{\pi}}
    =\sum_{n=N}^\infty\frac{1}{2n(n+1)}  
    \frac{D_{n+1}(x,y)}{2^{n+1}n!\sqrt{\pi}}
    -\frac{1}{2^{N+1}N!\sqrt{\pi}}\frac{D_{N+1}(x,y)}{2N}\;.
  \end{displaymath}
  It follows that
  \begin{displaymath}
    \left|(x-y)\sum_{n=N+1}^\infty
      \frac{H_n(x)H_n(y)}{2n\,2^nn!\sqrt{\pi}}\right|
    \leq D\left(\sum_{n=N}^\infty\frac{1}{n(n+1)}+\frac{1}{N}\right)
    =\frac{2D}{N}\;,
  \end{displaymath}
  which implies that provided $x\not= y$, for all $\gamma<1$,
  \begin{displaymath}
    \lim_{N\to\infty}N^\gamma\sum_{n=N+1}^\infty
    \frac{H_n(x)H_n(y)}{2n\,2^nn!\sqrt{\pi}}=0\;,
  \end{displaymath}
  as claimed.
\end{proof}
The proof of Theorem~\ref{thm:hermite} illustrates two aspects very
nicely. Firstly, it demonstrates why we weigh the moments considered
in Proposition~\ref{propn:moments} and
Proposition~\ref{propn:conv_mom} by $W^2$ instead of just by $W$ as
this is needed to ensure that the moments remain finite. Secondly, by
using the Christoffel--Darboux type formula given in
Proposition~\ref{propn:CDT} we improve the off-diagonal convergence in
Theorem~\ref{thm:hermite} by one factor of $N$ compared to what we
could deduce from the bound~(\ref{eq:H_bound}) alone.
\begin{remark}\rm
  By employing a similar workaround to the one we use when
  studying the Jacobi polynomials, it is possible to deduce the
  convergence on the diagonal in
  Theorem~\ref{thm:hermite} from the asymptotic
  formula~(\ref{eq:H_asymp}) and Lemma~\ref{lem:H_unifbound1}
  as well as Lemma~\ref{lem:H_unifbound2}. The approach
  presented above still has its own benefits as it nicely demonstrates
  Proposition~\ref{propn:moments}, it only uses~(\ref{eq:H_bound}) as
  opposed to the full asymptotic formula~(\ref{eq:H_asymp}), and it
  led us to Lemma~\ref{lem:H_initial}.
\end{remark}

\subsection{Associated Laguerre polynomials}
\label{sec:laguerre}

For a parameter $\alpha\in\R$ with $\alpha>-1$, the associated
Laguerre polynomials $\{L_n^{(\alpha)}:n\in\N_0\}$ are the classical
orthogonal polynomials on $I=[0,\infty)$ with respect to the weight
function $W\colon (0,\infty)\to\R$ defined by
\begin{displaymath}
  W(x)=x^{\alpha}\e^{-x}
\end{displaymath}
which are normalised such that, for $n\in\N_0$,
\begin{displaymath}
  M_n=\int_0^\infty x^{\alpha}\e^{-x}
  \left(L_n^{(\alpha)}(x)\right)^2\dd x=\frac{\Gamma(n+\alpha+1)}{n!}
  \quad\text{and}\quad
  K_n=\frac{(-1)^n}{n!}\;,
\end{displaymath}
see \cite[Chapter~5.1]{szego}. We adapt the convention that the
superscript for the associated Laguerre polynomials is reserved
for the parameter $\alpha>-1$, and not to indicate any number of
derivatives. According
to Littlejohn and Krall~\cite{littlekrall}, the associated Laguerre
polynomials
solve the differential equation, for $n\in\N_0$ and $x\in[0,\infty)$,
\begin{equation}\label{eq:AL_DE}
  x\,\frac{\db^2L_n^{(\alpha)}(x)}{\db x^2}+
  (\alpha+1-x)\,\frac{\db L_n^{(\alpha)}(x)}{\db x}+
  nL_n^{(\alpha)}(x)=0\;,
\end{equation}
which rewrites as
\begin{displaymath}
  \frac{\db}{\db x}
  \left(x^{\alpha+1}\e^{-x}\frac{\db L_n^{(\alpha)}(x)}{\db x}\right)
  =-n x^\alpha \e^{-x}L_n^{(\alpha)}(x)\;.
\end{displaymath}
In particular, we have
\begin{displaymath}
  Q(x)=x\;,\quad
  L(x)=\alpha+1-x\;,\quad
  P(x)=x^{\alpha+1}\e^{-x}
  \quad\text{and}\quad
  \lambda_n=n\;.
\end{displaymath}
Furthermore, the associated Laguerre polynomials satisfy the recurrence
relation, for $n\in\N$ and $x\in[0,\infty)$,
\begin{equation}\label{eq:AL_rec}
  (n+1)L_{n+1}^{(\alpha)}(x)
  =(2n+\alpha+1-x)L_n^{(\alpha)}(x)-
  (n+\alpha)L_{n-1}^{(\alpha)}(x)
\end{equation}
as well as the identity, for $n\in\N$ and $x\in[0,\infty)$,
\begin{equation}\label{eq:AL_diff}
  x\,\frac{\db L_n^{(\alpha)}(x)}{\db x}
  =n L_n^{(\alpha)}(x) - (n+\alpha)L_{n-1}^{(\alpha)}(x)\;,
\end{equation}
see~\cite[5.1.10 and 5.1.14]{szego}.
The bounds we use in our subsequent analysis for
the associated Laguerre polynomials
are consequences of the asymptotic
formula~\cite[Theorem~8.22.1]{szego}, which is due to
Fej\'er~\cite{fejer2,fejer1}.
\begin{thm}[Fej\'er's formula]\label{thm:fejer}
  For all $\alpha\in\R$ with $\alpha>-1$, we have, as $n\to\infty$,
  \begin{displaymath}
    L_n^{(\alpha)}(x)
    =\frac{\e^{\frac{x}{2}}n^{\frac{\alpha}{2}-\frac{1}{4}}}
    {\sqp x^{\frac{\alpha}{2}+\frac{1}{4}}}
    \cos\left(2\sqrt{n x}-\frac{\alpha\pi}{2}
      -\frac{\pi}{4}\right)
    +O\left(n^{\frac{\alpha}{2}-\frac{3}{4}}\right)\;,
  \end{displaymath}
  where the bound on the error term is uniform in $x\in[\eps,K]$ for
  $\eps, K\in\R$ with $0<\eps<K$.
\end{thm}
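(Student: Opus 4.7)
The plan is to derive Fej\'er's formula by applying the Liouville--Green (WKB) method directly to the Laguerre differential equation~(\ref{eq:AL_DE}). The first step is to eliminate the first-order term by substituting $L_n^{(\alpha)}(x)=x^{-(\alpha+1)/2}\e^{x/2}u_n(x)$, which brings~(\ref{eq:AL_DE}) into the Whittaker-type form
\[
u_n''(x)+Q_n(x)u_n(x)=0,\qquad Q_n(x)=\frac{n+(\alpha+1)/2}{x}+\frac{1-\alpha^2}{4x^2}-\frac{1}{4}.
\]
On any compact subinterval $[\eps,K]\subset(0,\infty)$, for $n$ sufficiently large the coefficient $Q_n(x)$ is positive and dominated by $n/x$, so the standard WKB ansatz yields two linearly independent solutions of the form $Q_n(x)^{-1/4}\exp(\pm\im\int^x\sqrt{Q_n(s)}\,\db s)$. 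Expanding the phase integral to the required order produces $2\sqrt{nx}$ to leading order together with corrections that depend only on $\alpha$ and $x$ modulo $O(n^{-1/2})$, while the amplitude $Q_n(x)^{-1/4}$ contributes $x^{1/4}n^{-1/4}(1+O(n^{-1}))$ uniformly in $x\in[\eps,K]$.

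The next step is to identify the particular linear combination of WKB solutions corresponding to $L_n^{(\alpha)}$, which reduces to pinning down a single overall constant and a single phase shift. The cleanest route is to match the WKB expansion against the Mehler--Heine limit $n^{-\alpha}L_n^{(\alpha)}(z/n)\to z^{-\alpha/2}J_\alpha(2\sqrt{z})$ combined with the large-argument Bessel asymptotic $J_\alpha(w)\sim\sqrt{2/(\pi w)}\cos(w-\alpha\pi/2-\pi/4)$: in the overlap region $1\ll z\ll n$ both expressions are valid, and comparing them forces the oscillatory phase to be $-\alpha\pi/2-\pi/4$ and fixes the prefactor $1/\sqp$. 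An equivalent route applies the method of steepest descent to the contour integral arising from the generating function $(1-t)^{-\alpha-1}\exp(-xt/(1-t))$, whose saddle points lie at $t\approx 1\pm\im\sqrt{x/n}$ for large $n$ and produce the same oscillatory contribution, with the phase coming from the opening angles of the steepest descent contours at the conjugate saddles and the prefactor traceable to Stirling's formula~(\ref{eq:stirling}) applied to $K_n=(-1)^n/n!$.

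Finally, this pointwise heuristic must be upgraded to the uniform remainder estimate $O(n^{\alpha/2-3/4})$ on $[\eps,K]$. The standard device is to write the Liouville--Green approximation as a Volterra integral equation for the error, built from the Green's function of the simplified leading-order equation $u''+(n/x)u=0$, and to close the estimate via a Gr\"onwall-type argument, giving a relative error of order $n^{-1/2}$ which, combined with the amplitude $n^{\alpha/2-1/4}$, matches the stated bound. The main obstacle will be uniformity near the lower endpoint: as $x\downarrow 0$ the $n/x$ and $(1-\alpha^2)/(4x^2)$ terms become comparable, so plain WKB loses control and extending the formula uniformly down to the origin would require a Langer-type modification with Bessel functions as comparison functions rather than trigonometric ones. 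Restricting $x$ to $[\eps,K]$ with $\eps>0$ fixed, as in the statement, sidesteps this turning-point issue and delivers the required uniform error.
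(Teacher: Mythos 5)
This statement is not proved in the paper at all: it is quoted verbatim as a classical result, namely \cite[Theorem~8.22.1]{szego}, attributed to Fej\'er \cite{fejer2,fejer1}, and the paper only ever uses it as a black box to derive the bounds needed for Lemmas~\ref{lem:AL_unifbound1} and~\ref{lem:AL_unifbound2} and for the off-diagonal estimate~(\ref{eq:AL_diffbound}). So there is no in-paper argument to compare against; what you have written is a from-scratch proof sketch of a known theorem. Your overall route (Liouville--Green after removing the first-order term, then fixing the constant and phase by matching) is a standard and viable one, and your computation of $Q_n$ and of the resulting amplitude and phase is correct and does reproduce the stated prefactor $n^{\alpha/2-1/4}x^{-\alpha/2-1/4}/\sqp$ and phase $-\alpha\pi/2-\pi/4$ when combined with the Mehler--Heine limit and the large-argument Bessel asymptotics.

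However, as a proof the sketch has two genuine gaps. First, the matching step is inconsistent with the domain on which you establish the WKB approximation. The Mehler--Heine regime is $x=z/n$ with the large-argument Bessel expansion requiring $z\to\infty$, so the overlap region where both expansions must simultaneously hold consists of $x$ tending to $0$ with $nx\to\infty$ (e.g.\ $x\asymp n^{-1/2}$); this region is disjoint from $[\eps,K]$ with $\eps>0$ fixed. To carry out the matching you must push the Volterra/Gr\"onwall error estimate down to $x\gg n^{-1}$ (which is in fact possible, since the error control function behaves like $n^{-1/2}x^{-1/2}$), but your final paragraph explicitly declines to do this by restricting to $[\eps,K]$ and deferring the turning-point analysis --- so the constant and phase are never actually pinned down by the argument as written. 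Second, the steepest-descent alternative via the generating function $(1-t)^{-\alpha-1}\exp(-xt/(1-t))$ is not routine: the saddles $t\approx 1\pm\im\sqrt{x/n}$ coalesce with the \emph{essential singularity} at $t=1$ as $n\to\infty$, so the standard saddle-point lemma does not apply directly and one needs a local rescaling near $t=1$; this coalescence is precisely the delicate point in Fej\'er's and Perron's treatments and cannot be waved through. Either route can be completed, but each requires the very uniform-in-$x$ control near the lower endpoint (respectively near $t=1$) that the sketch defers.
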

Due to the exponential decay in $\exp\left(-x\right)$ as $x\to\infty$, we
further obtain that, for all $l,n\in\N_0$,
\begin{equation}\label{eq:AL_bc1}
  \lim_{x\to\infty}x^lx^\alpha\e^{-x}L_n^{(\alpha)}(x)
  =\lim_{x\to\infty}x^lx^\alpha\e^{-x}\frac{\db L_n^{(\alpha)}(x)}{\db x}=0\;,
\end{equation}
and since $\alpha>-1$, we have, for all $l,n\in\N_0$,
\begin{equation}\label{eq:AL_bc2}
  \lim_{x\to 0}x^lx^{\alpha+1}\e^{-x}L_n^{(\alpha)}(x)
  =\lim_{x\to 0}x^lx^{\alpha+1}\e^{-x}\frac{\db L_n^{(\alpha)}(x)}{\db x}=0\;,
\end{equation}
which yield the boundary conditions needed for
Proposition~\ref{propn:moments} to deduce the recurrence relation
satisfied by the limit moments on the diagonal.
As for the Hermite polynomials, we compute the zeroth limit moment
explicitly, which makes use of the following two
results. The first lemma feeds directly into the second one,
but we include it
as a separate statement because the result is used a second time in
the proof of Theorem~\ref{thm:laguerre}.
\begin{lemma}\label{lem:AL_ind}
  For all $\alpha\in\R$ with $\alpha>-1$ and for all $n\in\N_0$, we have
  \begin{displaymath}
    \int_0^\infty x^{2\alpha+1}\e^{-2x}
    L_n^{(\alpha)}(x)\,\frac{\db L_n^{(\alpha)}(x)}{\db x}\dd x=0\;.
  \end{displaymath}
\end{lemma}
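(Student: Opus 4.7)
The plan is to identify an antiderivative for the integrand so that the vanishing of the integral follows from the vanishing of boundary terms. Writing $L=L_n^{(\alpha)}$ for brevity, I would consider the function
\begin{displaymath}
  g(x)=x^{2\alpha+2}\e^{-2x}\bigl(L'(x)\bigr)^2
\end{displaymath}
and differentiate it. The three resulting terms can be collected as $2\e^{-2x}x^{2\alpha+1}L'(x)\bigl[(\alpha+1-x)L'(x)+xL''(x)\bigr]$, and by the Laguerre differential equation~(\ref{eq:AL_DE}) the bracket equals $-nL(x)$. Hence
\begin{displaymath}
  \frac{\db}{\db x}\Bigl[x^{2\alpha+2}\e^{-2x}\bigl(L'(x)\bigr)^2\Bigr]
  =-2n\,x^{2\alpha+1}\e^{-2x}L(x)L'(x)\;,
\end{displaymath}
which is the integrand we want up to the constant $-2n$.

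Integrating this identity from $0$ to $\infty$ reduces the proof to checking that the boundary term $\bigl[x^{2\alpha+2}\e^{-2x}(L'(x))^2\bigr]_0^\infty$ vanishes. At infinity this follows from the exponential decay, exactly as in~(\ref{eq:AL_bc1}). At the origin, the assumption $\alpha>-1$ gives $2\alpha+2>0$, while $L'(0)$ is finite; so $x^{2\alpha+2}\e^{-2x}(L'(x))^2\to 0$ as $x\to 0^+$. Dividing by $-2n$ yields the claimed identity for $n\geq 1$. The case $n=0$ is trivial since $L_0^{(\alpha)}$ is constant, so $L_0^{(\alpha)\prime}\equiv 0$ and the integrand vanishes pointwise.

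The only genuine step is guessing the correct antiderivative: the weight $x^{2\alpha+2}\e^{-2x}$ is forced by the requirement that the derivative in $x$ produce the factor $x^{2\alpha+1}\e^{-2x}$ appearing in the integrand, after which the coefficient of $(L')^2$ in the expansion is arranged precisely so that it combines with $xL''$ via the differential equation. Thus there is no real obstacle; the delicate-looking integral is an exact boundary evaluation disguised by the Sturm--Liouville structure, and the only point requiring attention is confirming the integrability at $x=0$, which is guaranteed by $\alpha>-1$.
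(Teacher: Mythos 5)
Your argument is correct and is essentially the paper's proof in disguise: the paper integrates by parts, writing $\int_0^\infty x^{2\alpha+1}\e^{-2x}\bigl(\tfrac{\db L_n^{(\alpha)}}{\db x}\bigr)^2\dd x=-\int_0^\infty x\bigl(x^{2\alpha+1}\e^{-2x}\bigl(\tfrac{\db L_n^{(\alpha)}}{\db x}\bigr)^2\bigr)'\dd x$ and then invokes~(\ref{eq:AL_DE}), which is exactly your observation that $x^{2\alpha+2}\e^{-2x}\bigl(\tfrac{\db L_n^{(\alpha)}}{\db x}\bigr)^2$ is an antiderivative of $-2n\,x^{2\alpha+1}\e^{-2x}L_n^{(\alpha)}\tfrac{\db L_n^{(\alpha)}}{\db x}$, with the same boundary checks at $0$ and $\infty$ and the same trivial treatment of $n=0$. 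No gap; only the packaging differs.
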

\begin{proof}
  Note that the integrals we consider are well-defined since $2\alpha+1>-1$.
  Integration by parts and~(\ref{eq:AL_bc1})
  yield, for $n\in\N_0$,
  \begin{displaymath}
    \int_0^\infty x^{2\alpha+1}\e^{-2x}
    \left(\frac{\db L_n^{(\alpha)}(x)}{\db x}\right)^2\dd x
    =-\int_0^\infty x\left(x^{2\alpha+1}\e^{-2x}
      \left(\frac{\db L_n^{(\alpha)}(x)}{\db x}\right)^2\right)'\dd x\;.
  \end{displaymath}
  Using the differential
  equation~(\ref{eq:AL_DE}) satisfied by the associated Laguerre
  polynomials, we deduce
  \begin{displaymath}
    \int_0^\infty x^{2\alpha+1}\e^{-2x}
    \left(\frac{\db L_n^{(\alpha)}(x)}{\db x}\right)^2\dd x
    =\int_0^\infty x^{2\alpha+1}\e^{-2x}\left(
      2nL_n^{(\alpha)}(x)+\frac{\db L_n^{(\alpha)}(x)}{\db x}\right)
    \frac{\db L_n^{(\alpha)}(x)}{\db x}\dd x\;,
  \end{displaymath}
  which implies the claimed result for $n\not= 0$. For the case $n=0$,
  it suffices to note that $L_0^{(\alpha)}(x)=1$ for all
  $x\in[0,\infty)$.
\end{proof}
Due to the identity~(\ref{eq:AL_diff}), Lemma~\ref{lem:AL_ind} shows
that, if $\alpha>-\frac{1}{2}$ then, for all $n\in\N$,
\begin{displaymath}
  n\int_0^\infty x^{2\alpha}\e^{-2x}
  \left(L_n^{(\alpha)}(x)\right)^2\dd x
  =(n+\alpha)\int_0^\infty x^{2\alpha}\e^{-2x}
  L_n^{(\alpha)}(x) L_{n-1}^{(\alpha)}(x)\dd x\;.
\end{displaymath}
While the integrals are not well-defined when
$\alpha\in(-1,-\frac{1}{2}]$, we still obtain,
for $\alpha>-1$ and $n\in\N$ fixed,
as $\eps\to 0$,
\begin{equation}\label{eq:AL_ind}
  n\int_\eps^\infty x^{2\alpha}\e^{-2x}
  \left(L_n^{(\alpha)}(x)\right)^2\dd x
  -(n+\alpha)\int_\eps^\infty x^{2\alpha}\e^{-2x}
  L_n^{(\alpha)}(x) L_{n-1}^{(\alpha)}(x)\dd x
  =O\left(\eps^{2\alpha+2}\right)\;.
\end{equation}
This is used in the proof of the next lemma, where extra care is
needed to ensure that it also works for $\alpha\in(-1,-\frac{1}{2}]$.
\begin{lemma}\label{lem:AL_initial}
  For $\alpha\in\R$ with $\alpha>-1$ and for all $n\in\N_0$, we have
  \begin{displaymath}
    \int_0^\infty x^{2\alpha+1}\e^{-2x}
    \left(L_{n}^{(\alpha)}(x)\right)^2\dd x
    =\frac{\Gamma(n+\alpha+1)\Gamma\left(n+\frac{1}{2}\right)
      \Gamma\left(\alpha+\frac{3}{2}\right)}
    {2\pi\left(\Gamma(n+1)\right)^2}\;.
  \end{displaymath}
\end{lemma}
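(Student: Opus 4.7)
The plan is to derive a simple two-term recurrence for
$I_n := \int_0^\infty x^{2\alpha+1}\e^{-2x}\bigl(L_n^{(\alpha)}(x)\bigr)^2 \dd x$
of the form $I_n = \frac{(2n-1)(n+\alpha)}{2n^2}I_{n-1}$ for $n\geq 2$, to handle the cases $n=0$ and $n=1$ by direct computation, and to conclude by induction. For $n=0$ one has $I_0 = \Gamma(2\alpha+2)/2^{2\alpha+2}$, which equals $\Gamma(\alpha+1)\Gamma(\alpha+\frac{3}{2})/(2\sqp)$ by the Legendre duplication formula~(\ref{eq:LDF}) and matches the claim.

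To establish the recurrence I would introduce three auxiliary integrals,
\begin{displaymath}
  C_n := \int_0^\infty\!x^{2\alpha+2}\e^{-2x}\bigl(L_n^{(\alpha)}\bigr)^2\dd x,\quad
  E_n := \int_0^\infty\!x^{2\alpha+1}\e^{-2x}L_n^{(\alpha)}L_{n-1}^{(\alpha)}\dd x,\quad
  B_n := \int_0^\infty\!x^{2\alpha+2}\e^{-2x}\bigl(L_n^{(\alpha)\prime}\bigr)^2\dd x,
\end{displaymath}
and derive three identities relating these to $I_n$. Integrating the exact derivative $(x^{2\alpha+2}\e^{-2x}(L_n^{(\alpha)})^2)'$ from $0$ to $\infty$, where the boundary terms vanish for $\alpha>-1$, and using $xL_n^{(\alpha)\prime} = nL_n^{(\alpha)} - (n+\alpha)L_{n-1}^{(\alpha)}$ from~(\ref{eq:AL_diff}), yields $C_n = (n+\alpha+1)I_n - (n+\alpha)E_n$. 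Integrating $(x^{2\alpha+3}\e^{-2x}(L_n^{(\alpha)\prime})^2)'$ and simplifying with the Laguerre differential equation~(\ref{eq:AL_DE}) together with~(\ref{eq:AL_diff}) produces $B_n = 2n\bigl(nI_n - (n+\alpha)E_n\bigr)$. Integrating $(x^{2\alpha+2}\e^{-2x}L_n^{(\alpha)}L_n^{(\alpha)\prime})'$ and using~(\ref{eq:AL_DE}) together with Lemma~\ref{lem:AL_ind} produces, independently, $B_n = 2nI_n - (n+\alpha)E_n$. Eliminating $B_n$ between the last two identities gives $(2n-1)(n+\alpha)E_n = 2n(n-1)I_n$.

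Multiplying the three-term recurrence~(\ref{eq:AL_rec}) by $L_n^{(\alpha)}$ and integrating against $x^{2\alpha+1}\e^{-2x}$ yields $(n+1)E_{n+1} = (2n+\alpha+1)I_n - C_n - (n+\alpha)E_n$; substituting the expression for $C_n$ above collapses this neatly to $(n+1)E_{n+1} = nI_n$, so $E_n = \frac{n-1}{n}I_{n-1}$ for every $n\geq 1$. Equating this with $E_n = \frac{2n(n-1)}{(2n-1)(n+\alpha)}I_n$ obtained in the previous step and cancelling the factor $n-1$ gives the desired recurrence for all $n\geq 2$. The case $n=1$, where the factor $n-1$ would have to be cancelled from zero, is handled by direct evaluation of $I_1 = \int_0^\infty x^{2\alpha+1}\e^{-2x}(\alpha+1-x)^2\dd x$ using elementary Gamma identities and~(\ref{eq:LDF}), giving $I_1 = \frac{\alpha+1}{2}I_0$; the induction then concludes.

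The main obstacle is purely organisational, namely keeping track of the three auxiliary integrals and verifying, at each of the three integration-by-parts steps, that the boundary terms vanish at $x=0$. For $\alpha>-1$ every such boundary expression carries a factor $x^{2\alpha+2}$ or higher at the origin, so this is routine, and unlike in the $\eps$-regularised identity~(\ref{eq:AL_ind}) the integrals $I_n$, $C_n$, $E_n$ and $B_n$ are all absolutely convergent on $[0,\infty)$, so no regularisation is required.
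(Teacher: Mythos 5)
Your proposal is correct: I checked the four identities ($C_n=(n+\alpha+1)I_n-(n+\alpha)E_n$, the two expressions for $B_n$, and $(n+1)E_{n+1}=nI_n$), and they do combine to give $(2n-1)(n+\alpha)E_n=2n(n-1)I_n$ and hence $I_n=\frac{(2n-1)(n+\alpha)}{2n^2}I_{n-1}$ for $n\geq 2$, which is exactly the ratio $\Gamma(n+\alpha+1)\Gamma(n+\frac{1}{2})/(\Gamma(n+1))^2$ requires; your base cases $I_0$ and $I_1=\frac{\alpha+1}{2}I_0$ are also right. The paper arrives at the same two-term recurrence (written as $I_{n+1}=\frac{(n+\alpha+1)(n+\frac{1}{2})}{(n+1)^2}I_n$, valid already from $n=0$) and the same $n=0$ base case via the Legendre duplication formula, but by a different mechanism: it pivots through the integrals $\int_0^\infty x^{2\alpha}\e^{-2x}(L_n^{(\alpha)})^2\dd x$ and $\int_0^\infty x^{2\alpha}\e^{-2x}L_n^{(\alpha)}L_{n-1}^{(\alpha)}\dd x$, which diverge for $\alpha\in(-1,-\frac{1}{2}]$, and therefore has to truncate all integrals at $\eps$, track explicit boundary error terms $e_{n,\eps}^{(\alpha)}$ and $O(\eps^{2\alpha+2})$ remainders, and pass to the limit $\eps\to 0$ only after these cancel. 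Your route buys a cleaner argument for the full range $\alpha>-1$ — every integral you introduce carries weight $x^{2\alpha+1}$ or $x^{2\alpha+2}$ and is absolutely convergent, so no regularisation is needed — at the modest price of juggling three auxiliary integrals and a separate $n=1$ computation forced by the cancellation of the factor $n-1$. One small point to make explicit when writing this up: the identity $xL_n^{(\alpha)\prime}=nL_n^{(\alpha)}-(n+\alpha)L_{n-1}^{(\alpha)}$ and the three-term recurrence are stated in the paper for $n\in\N$, which is all you use, and Lemma~\ref{lem:AL_ind} covers the one place where a weight-$x^{2\alpha+1}$ cross term $L_n^{(\alpha)}L_n^{(\alpha)\prime}$ appears.
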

\begin{proof}
  For $\eps>0$ and $n\in\N_0$, set
  \begin{displaymath}
    e_{n,\eps}^{(\alpha)}=-\left.\frac{1}{2}x^{2\alpha+1}\e^{-2x}
      \left(L_n^{(\alpha)}(x)\right)^2\right|_{x=\eps}\;.
  \end{displaymath}
  Integration by parts and~(\ref{eq:AL_bc1}) give
  \begin{align*}
    &\int_\eps^\infty x^{2\alpha+1}\e^{-2x}
    \left(L_n^{(\alpha)}(x)\right)^2\dd x
    =e_{n,\eps}^{(\alpha)}+\frac{1}{2}\int_\eps^\infty\e^{-2x}
      \left(x^{2\alpha+1}\left(L_n^{(\alpha)}(x)\right)^2\right)'\dd x\\
    &\quad=e_{n,\eps}^{(\alpha)}+\left(\alpha+\frac{1}{2}\right)
      \int_\eps^\infty x^{2\alpha}\e^{-2x}
      \left(L_n^{(\alpha)}(x)\right)^2\dd x+
      \int_\eps^\infty
      x^{2\alpha+1}e^{-2x}L_n^{(\alpha)}(x)
      \frac{\db L_n^{(\alpha)}(x)}{\db x}\dd x\;,
  \end{align*}
  which by Lemma~\ref{lem:AL_ind} implies that, as $\eps\to 0$,
  \begin{align}\label{eq:AL_help1}
    \begin{aligned}
      &\int_\eps^\infty x^{2\alpha+1}\e^{-2x}
      \left(L_n^{(\alpha)}(x)\right)^2\dd x\\
      &\qquad=e_{n,\eps}^{(\alpha)}+\left(\alpha+\frac{1}{2}\right)
      \int_\eps^\infty x^{2\alpha}\e^{-2x}
      \left(L_n^{(\alpha)}(x)\right)^2\dd x
      +O\left(\eps^{2\alpha+2}\right)\;.
    \end{aligned}
  \end{align}
  On the other hand, using the recurrence relation~(\ref{eq:AL_rec}),
  we obtain
  \begin{align*}
    \begin{aligned}
      &\int_\eps^\infty x^{2\alpha+1}\e^{-2x}
      \left(L_n^{(\alpha)}(x)\right)^2\dd x\\
      &\qquad =\int_\eps^\infty x^{2\alpha}\e^{-2x}\left(
        (2n+\alpha+1)L_n^{(\alpha)}(x)-
        (n+1)L_{n+1}^{(\alpha)}(x)-
        (n+\alpha)L_{n-1}^{(\alpha)}(x)
      \right)L_n^{(\alpha)}(x)\dd x\;.
    \end{aligned}
  \end{align*}
  From~(\ref{eq:AL_ind}), it follows that
  \begin{align}\label{eq:AL_help2}
    \begin{aligned}
      &\int_\eps^\infty x^{2\alpha+1}\e^{-2x}
      \left(L_n^{(\alpha)}(x)\right)^2\dd x\\
      &\qquad=\int_\eps^\infty x^{2\alpha}\e^{-2x}\left(
        (n+\alpha+1)L_n^{(\alpha)}(x)-
        (n+1)L_{n+1}^{(\alpha)}(x)\right)L_n^{(\alpha)}(x)\dd x
      +O\left(\eps^{2\alpha+2}\right)\;.
    \end{aligned}
  \end{align}
  Subtracting~(\ref{eq:AL_help1}) from (\ref{eq:AL_help2}), we
  conclude
  \begin{align*}
    &\left(n+\frac{1}{2}\right)\int_\eps^\infty x^{2\alpha}\e^{-2x}
    \left(L_n^{(\alpha)}(x)\right)^2\dd x\\
    &\qquad=e_{n,\eps}^{(\alpha)}+(n+1)\int_\eps^\infty x^{2\alpha}\e^{-2x}
    L_n^{(\alpha)}(x) L_{n+1}^{(\alpha)}(x)\dd x
      +O\left(\eps^{2\alpha+2}\right)\;,
  \end{align*}
  which together with~(\ref{eq:AL_ind}) and (\ref{eq:AL_help1})
  yields, as $\eps\to\infty$,
  \begin{align*}
    \begin{aligned}
      &(n+\alpha+1)\left(n+\frac{1}{2}\right)
      \int_\eps^\infty x^{2\alpha+1}\e^{-2x}
      \left(L_{n}^{(\alpha)}(x)\right)^2\dd x\\
      &\qquad =(n+1)^2\int_\eps^\infty x^{2\alpha+1}\e^{-2x}
      \left(L_{n+1}^{(\alpha)}(x)\right)^2\dd x
      +(n+\alpha+1)^2e_{n,\eps}^{(\alpha)}
      -(n+1)^2e_{n+1,\eps}^{(\alpha)}
      +O\left(\eps^{2\alpha+2}\right)\;.
    \end{aligned}
  \end{align*}
  According to~\cite[5.1.7]{szego}, the associated Laguerre
  polynomials satisfy
  \begin{displaymath}
    L_{n}^{(\alpha)}(0)=\binom{n+\alpha}{n}\;,
  \end{displaymath}
  which implies that, as $\eps\to 0$,
  \begin{displaymath}
    (n+\alpha+1)^2e_{n,\eps}^{(\alpha)}
    -(n+1)^2e_{n+1,\eps}^{(\alpha)}
    =O\left(\eps^{2\alpha+2}\right)\;.
  \end{displaymath}
  Thus, we can take the limit as $\eps\to\infty$ to deduce, for $n\in\N_0$,
  \begin{equation}\label{eq:AL_iteration}
    \int_0^\infty x^{2\alpha+1}\e^{-2x}
    \left(L_{n+1}^{(\alpha)}(x)\right)^2\dd x
    =\frac{\left(n+\alpha+1\right)\left(n+\frac{1}{2}\right)}
    {\left(n+1\right)^2}
    \int_0^\infty x^{2\alpha+1}\e^{-2x}
    \left(L_n^{(\alpha)}(x)\right)^2\dd x\;.
  \end{equation}
  We further compute that
  \begin{equation}\label{eq:AL_base}
    \int_0^\infty x^{2\alpha+1}\e^{-2x}
    \left(L_0^{(\alpha)}(x)\right)^2\dd x
    =\int_0^\infty x^{2\alpha+1}\e^{-2x}\dd x
    =\frac{\Gamma(2\alpha+2)}{2^{2\alpha+2}}\;.
  \end{equation}
  Using the Legendre duplication formula~(\ref{eq:LDF}) and
  $\Gamma\left(\frac{1}{2}\right)=\sqrt{\pi}$, this can be rewritten as
  \begin{displaymath}
    \frac{\Gamma(2\alpha+2)}{2^{2\alpha+2}}
    =\frac{\Gamma(\alpha+1)\Gamma\left(\frac{1}{2}\right)
      \Gamma\left(\alpha+\frac{3}{2}\right)}
    {2\pi}\;.
  \end{displaymath}
  The claimed identity then follows by induction from
  (\ref{eq:AL_iteration}) and (\ref{eq:AL_base}).
\end{proof}
Note that the above proof simplifies significantly for
$\alpha>-\frac{1}{2}$ because in that regime all the integrals
considered are
well-defined on
$[0,\infty)$ and the error terms $e_{n,\eps}^{(\alpha)}$ vanish as $\eps\to 0$.

For the Laguerre polynomials $\{L_n:n\in\N_0\}$, which are
the associated Laguerre polynomials with parameter $\alpha=0$,
Lemma~\ref{lem:AL_initial} as well as the identity (\ref{eq:AL_help1})
show that
\begin{displaymath}
  \int_0^\infty \e^{-2x}
  \left(L_{n}(x)\right)^2\dd x
  =\frac{1}{2^{2n+1}}\binom{2n}{n}
  \quad\text{and}\quad
  \int_0^\infty x\e^{-2x}
  \left(L_{n}(x)\right)^2\dd x
  =\frac{1}{4^{n+1}}\binom{2n}{n}\;.
\end{displaymath}

The next two results are the local uniform bounds later needed
to apply the Arzel\`{a}--Ascoli theorem.
\begin{lemma}\label{lem:AL_unifbound1}
  Fix $\eps,K\in\R$ with $0<\eps<K$ and $\alpha\in\R$ with
  $\alpha>-1$. The family
  \begin{displaymath}
    \left\{\sqrt{N}\sum_{n=N+1}^\infty
      \frac{\Gamma(n)L_n^{(\alpha)}(x)L_n^{(\alpha)}(y)}
    {\Gamma(n+\alpha+1)}:
      N\in\N\text{ and }x,y\in [\eps,K]\right\}
  \end{displaymath}
  is uniformly bounded.
\end{lemma}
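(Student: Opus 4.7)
The plan is to mimic the proof of Lemma~\ref{lem:H_unifbound1}, using Fej\'er's formula (Theorem~\ref{thm:fejer}) in place of the corresponding bound for Hermite polynomials. Fej\'er's formula, uniformly in $x\in[\eps,K]$, gives
\begin{displaymath}
  L_n^{(\alpha)}(x)=O\bigl(n^{\frac{\alpha}{2}-\frac{1}{4}}\bigr)
  \quad\text{as }n\to\infty\;,
\end{displaymath}
since both the leading cosine term and the error term are of this order on $[\eps,K]$. Hence there exists $C\in\R$ with $|L_n^{(\alpha)}(x)L_n^{(\alpha)}(y)|\leq C\,n^{\alpha-\frac{1}{2}}$ for all $n\in\N$ and all $x,y\in[\eps,K]$.

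Next, by the Gamma-function asymptotics~(\ref{eq:Gamma_asymp}), we have
\begin{displaymath}
  \frac{\Gamma(n)}{\Gamma(n+\alpha+1)}
  =\frac{\Gamma(n)}{\Gamma(n)\,n^{\alpha+1}}\bigl(1+o(1)\bigr)
  =O\bigl(n^{-\alpha-1}\bigr)\;.
\end{displaymath}
Combining these two estimates, we get a positive constant $D\in\R$ such that, for all $n\in\N$ and all $x,y\in[\eps,K]$,
\begin{displaymath}
  \left|\frac{\Gamma(n)\,L_n^{(\alpha)}(x)L_n^{(\alpha)}(y)}{\Gamma(n+\alpha+1)}\right|
  \leq D\,n^{-\frac{3}{2}}\;.
\end{displaymath}

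It then suffices to observe, exactly as in the proof of Lemma~\ref{lem:H_unifbound1}, that
\begin{displaymath}
  \sqrt{N}\sum_{n=N+1}^\infty n^{-\frac{3}{2}}
  \leq \sqrt{N}\left(N^{-\frac{3}{2}}+\int_N^\infty z^{-\frac{3}{2}}\dd z\right)
  \leq 3\;,
\end{displaymath}
from which the uniform boundedness of the stated family follows immediately. No real obstacle is expected here; the only subtlety is that Fej\'er's formula is an asymptotic statement, but since the family $\{L_n^{(\alpha)}\}$ consists of polynomials that are continuous on the compact set $[\eps,K]$, the finitely many small-$n$ terms can be absorbed into the constant $D$, and the bound on $[\eps,K]$ is genuinely uniform by the uniformity of the error term in Theorem~\ref{thm:fejer}.
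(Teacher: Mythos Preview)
Your proof is correct and follows essentially the same approach as the paper: apply Fej\'er's formula to bound $|L_n^{(\alpha)}(x)L_n^{(\alpha)}(y)|$ by $Cn^{\alpha-\frac{1}{2}}$ uniformly on $[\eps,K]$, combine with the Gamma asymptotics~(\ref{eq:Gamma_asymp}) to get a termwise bound of order $n^{-3/2}$, and then use the same tail-sum estimate as in Lemma~\ref{lem:H_unifbound1}. Your explicit remark about absorbing the finitely many small-$n$ terms into the constant is a helpful clarification that the paper leaves implicit.
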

\begin{proof}
  As a consequence of Fej\'er's formula, see
  Theorem~\ref{thm:fejer}, there exists a positive constant $C\in\R$,
  depending on $\eps$ and $K$, such
  that, for all $n\in\N$ and for all $x,y\in [\eps,K]$,
  \begin{displaymath}
    \left|L_n^{(\alpha)}(x)L_n^{(\alpha)}(y)\right|
    \leq Cn^{\alpha-\frac{1}{2}}\;.
  \end{displaymath}
  Using the asymptotics~(\ref{eq:Gamma_asymp}) for the Gamma function,
  we deduce that $C\in\R$ can further be chosen such that
  \begin{displaymath}
    \left|\frac{\Gamma(n)L_n^{(\alpha)}(x)L_n^{(\alpha)}(y)}
      {\Gamma(n+\alpha+1)}\right|
    \leq C n^{-\alpha-1}n^{\alpha-\frac{1}{2}}
    =Cn^{-\frac{3}{2}}\;.
  \end{displaymath}
  It follows that, for all $N\in\N$ and for all $x,y\in [\eps,K]$,
  \begin{displaymath}
    \left|\sqrt{N}\sum_{n=N+1}^\infty
      \frac{\Gamma(n)L_n^{(\alpha)}(x)L_n^{(\alpha)}(y)}
    {\Gamma(n+\alpha+1)}\right|
    \leq\sqrt{N}\sum_{n=N+1}^\infty C n^{-\frac{3}{2}}
    \leq 3C\;,
  \end{displaymath}
  which proves the claimed uniform boundedness.
\end{proof}
\begin{lemma}\label{lem:AL_unifbound2}
  Fix $\eps,K\in\R$ with $0<\eps<K$ and $\alpha\in\R$ with
  $\alpha>-1$. The family
  \begin{displaymath}
    \left\{\sqrt{N}\sum_{n=N+1}^\infty
      \frac{\Gamma(n)}{\Gamma(n+\alpha+1)}L_n^{(\alpha)}(x)
      \frac{\db L_n^{(\alpha)}(x)}{\db x}:
      N\in\N\text{ and }x\in [\eps,K]\right\}
  \end{displaymath}
  is uniformly bounded.
\end{lemma}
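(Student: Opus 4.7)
My plan is to mirror the strategy used for Lemma~\ref{lem:H_unifbound2}, combining the first-order derivative identity~(\ref{eq:AL_diff}) with the three-term recurrence~(\ref{eq:AL_rec}) so as to reveal a telescoping-type cancellation after summation. Writing $c_n=\Gamma(n)/\Gamma(n+\alpha+1)$, a direct application of Fej\'er's formula (Theorem~\ref{thm:fejer}) together with the Gamma asymptotics~(\ref{eq:Gamma_asymp}) only gives $\left|c_n L_n^{(\alpha)}(x)\frac{\db L_n^{(\alpha)}(x)}{\db x}\right|=O(n^{-1/2})$, which is not summable against a $\sqrt{N}$ prefactor. Extra cancellation must therefore be extracted from the recurrence, exactly as in the Hermite case.

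Multiplying~(\ref{eq:AL_diff}) and~(\ref{eq:AL_rec}) by $L_n^{(\alpha)}(x)$ and adding the resulting identities gives, for $n\in\N$,
\[
2x L_n^{(\alpha)}(x)\frac{\db L_n^{(\alpha)}(x)}{\db x}
=(x-\alpha-1)\bigl(L_n^{(\alpha)}(x)\bigr)^2
+(n+1)L_n^{(\alpha)}(x)L_{n+1}^{(\alpha)}(x)
-(n+\alpha)L_n^{(\alpha)}(x)L_{n-1}^{(\alpha)}(x).
\]
Multiplying this by $c_n$, summing from $n=N+1$ to infinity, and shifting the index $n\to m+1$ in the sum containing $L_n^{(\alpha)}L_{n-1}^{(\alpha)}$, I would exploit the Gamma-function identity $(m+\alpha+1)c_{m+1}=mc_m$, under which the coefficients $(n+1)c_n$ and $nc_n$ appearing in front of $L_n^{(\alpha)}(x)L_{n+1}^{(\alpha)}(x)$ in the two cross-product sums telescope down to $c_n$. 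This yields
\[
2x\sum_{n=N+1}^\infty c_n L_n^{(\alpha)}(x)\frac{\db L_n^{(\alpha)}(x)}{\db x}
=(x-\alpha-1)\sum_{n=N+1}^\infty c_n\bigl(L_n^{(\alpha)}(x)\bigr)^2
+\sum_{n=N+1}^\infty c_n L_n^{(\alpha)}(x)L_{n+1}^{(\alpha)}(x)
-Nc_N L_N^{(\alpha)}(x)L_{N+1}^{(\alpha)}(x).
\]

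Each piece on the right-hand side can then be bounded uniformly on $[\eps,K]$ using Fej\'er's formula, which yields $|L_n^{(\alpha)}(x)L_{n\pm 1}^{(\alpha)}(x)|=O(n^{\alpha-1/2})$, and~(\ref{eq:Gamma_asymp}), which gives $c_n=O(n^{-\alpha-1})$. Each summand in the two infinite series is therefore $O(n^{-3/2})$, so $\sqrt{N}\sum_{n>N}n^{-3/2}=O(1)$, while the boundary term contributes $\sqrt{N}\cdot N\cdot N^{-\alpha-1}\cdot N^{\alpha-1/2}=O(1)$. Dividing by $2x$ and using $1/x\leq 1/\eps$ delivers the uniform bound.

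The main obstacle is locating the correct algebraic identity that forces the telescoping: neither~(\ref{eq:AL_diff}) nor~(\ref{eq:AL_rec}) alone produces summands of size $O(n^{-3/2})$, and it is only once the two identities are combined and an index shift is applied that the Gamma-function relation $(m+\alpha+1)c_{m+1}=mc_m$ can generate the required cancellation. Once this identity is in hand, the remainder of the argument is a routine application of Fej\'er's formula and the standard Gamma asymptotics.
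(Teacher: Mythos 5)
Your proposal is correct and follows essentially the same route as the paper's proof: both use the derivative identity~(\ref{eq:AL_diff}) together with the three-term recurrence~(\ref{eq:AL_rec}) to convert $L_n^{(\alpha)}(x)\frac{\db L_n^{(\alpha)}(x)}{\db x}$ into cross-products $L_n^{(\alpha)}L_{n\pm1}^{(\alpha)}$, then extract the crucial cancellation via an index shift (your relation $(m+\alpha+1)c_{m+1}=mc_m$ is exactly the telescoping-like rearrangement in the paper's~(\ref{eq:AL_help3})--(\ref{eq:AL_help4})), leaving summands of size $O(n^{-3/2})$ plus an $O(N^{-1/2})$ boundary term that Fej\'er's formula and~(\ref{eq:Gamma_asymp}) control. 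Your algebraic packaging, combining the two identities into a single relation before summing, is a slightly cleaner presentation of the same mechanism, and the claimed identity checks out.
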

\begin{proof}
  According to~(\ref{eq:AL_diff}), we have, for all $n\in\N$,
  \begin{equation}\label{eq:AL_help3}
    \frac{\Gamma(n)}{\Gamma(n+\alpha+1)}L_n^{(\alpha)}(x)
    \frac{\db L_n^{(\alpha)}(x)}{\db x}
    =\frac{\Gamma(n+1)}{\Gamma(n+\alpha+1)}
    \frac{L_n^{(\alpha)}(x)\left(
        nL_n^{(\alpha)}(x)-(n+\alpha)L_{n-1}^{(\alpha)}(x)
      \right)}{nx}\;.
  \end{equation}
  We proceed by using a telescoping-like series to find an alternative
  expression
  for the series we are interested in such
  that Fej\'er's formula provides
  sufficient control over the terms of the new series to imply the
  claimed uniform boundedness.
  We observe that, for $N\in\N$,
  \begin{align*}
    &\sum_{n=N+1}^\infty
      \frac{\Gamma(n+1)}{\Gamma(n+\alpha+1)}
      \frac{(n+\alpha)L_n^{(\alpha)}(x)L_{n-1}^{(\alpha)}(x)}{n}
      +\sum_{n=N+1}^\infty
      \frac{\Gamma(n+1)}{\Gamma(n+\alpha+1)}
      \frac{(n+1)L_n^{(\alpha)}(x)L_{n+1}^{(\alpha)}(x)}{n}\\
    &\qquad=
      \sum_{n=N+1}^\infty
      \frac{\Gamma(n+1)L_n^{(\alpha)}(x)L_{n-1}^{(\alpha)}(x)}
      {\Gamma(n+\alpha)}
      \left(\frac{1}{n}+\frac{1}{n-1}\right)
      -\frac{\Gamma(N+2)}{\Gamma(N+\alpha+1)}
      \frac{L_N^{(\alpha)}(x)L_{N+1}^{(\alpha)}(x)}{N}\\
    &\qquad=
      \sum_{n=N+1}^\infty
      \frac{\Gamma(n)L_n^{(\alpha)}(x)L_{n-1}^{(\alpha)}(x)}
      {\Gamma(n+\alpha)}
      \left(2+\frac{1}{n-1}\right)
      -\frac{\Gamma(N+2)}{\Gamma(N+\alpha+1)}
      \frac{L_N^{(\alpha)}(x)L_{N+1}^{(\alpha)}(x)}{N}\;.  
  \end{align*}
  Applying the recurrence relation~(\ref{eq:AL_rec}), we deduce
  \begin{align}\label{eq:AL_help4}
    \begin{aligned}
      &\sum_{n=N+1}^\infty
        \frac{\Gamma(n+1)\left(L_n^{(\alpha)}(x)\right)^2}
        {\Gamma(n+\alpha+1)}\left(2+\frac{\alpha+1}{n}\right)
        -\sum_{n=N+1}^\infty
        \frac{\Gamma(n)L_n^{(\alpha)}(x)L_{n-1}^{(\alpha)}(x)}
        {\Gamma(n+\alpha)}
        \left(2+\frac{1}{n-1}\right)\\
      &\qquad=
        \sum_{n=N+1}^\infty
        \frac{\Gamma(n)x\left(L_n^{(\alpha)}(x)\right)^2}
        {\Gamma(n+\alpha+1)}
        -\frac{\Gamma(N+2)}{\Gamma(N+\alpha+1)}
        \frac{L_N^{(\alpha)}(x)L_{N+1}^{(\alpha)}(x)}{N}\;.
    \end{aligned}
  \end{align}
  By Fej\'er's formula, see Theorem~\ref{thm:fejer}, and due to the
  asymptotics~(\ref{eq:Gamma_asymp}), we can find $C\in\R$ such that,
  for all $n\in\N$ with $n\geq 2$ and for all $x\in[\eps, K]$,
  \begin{displaymath}
    \left|\frac{\Gamma(n)\left(L_n^{(\alpha)}(x)\right)^2}
      {\Gamma(n+\alpha+1)}\right|
    \leq Cn^{-\frac{3}{2}}
    \quad\text{as well as}\quad
    \left|\frac{\Gamma(n-1)L_n^{(\alpha)}(x)L_{n-1}^{(\alpha)}(x)}
      {\Gamma(n+\alpha)}\right|
    \leq Cn^{-\frac{3}{2}}
  \end{displaymath}
  and, for all $N\in\N$,
  \begin{displaymath}
    \left|\frac{\Gamma(N+2)}{\Gamma(N+\alpha+1)}
      \frac{L_N^{(\alpha)}(x)L_{N+1}^{(\alpha)}(x)}{N}\right|
    \leq C N^{-\frac{1}{2}}\;.
  \end{displaymath}
  From these estimates, by combining (\ref{eq:AL_help3}) as well as
  (\ref{eq:AL_help4}), and since $x\in[\eps,K]$, it follows that
  \begin{align*}
    &\left|\sqrt{N}\sum_{n=N+1}^\infty
      \frac{\Gamma(n)}{\Gamma(n+\alpha+1)}L_n^{(\alpha)}(x)
      \frac{\db L_n^{(\alpha)}(x)}{\db x}\right|\\
    &\qquad\leq\frac{\sqrt{N}C}{\eps}
    \left((\alpha+1)\sum_{n=N+1}^\infty n^{-\frac{3}{2}}
      +\sum_{n=N+1}^\infty n^{-\frac{3}{2}}
      +K\sum_{n=N+1}^\infty n^{-\frac{3}{2}}+N^{-\frac{1}{2}}\right)\\
    &\qquad\leq\frac{3(\alpha+3+K)C}{\eps}\;,
  \end{align*}
  as required.
\end{proof}
As in the proof of Lemma~\ref{lem:H_unifbound2}, we
note that simply applying the known asymptotic formula for the
polynomials does not give us sufficient control to obtain the desired
uniform bound in Lemma~\ref{lem:AL_unifbound2}. Instead, we first need
to perform a telescoping-like rearrangement. This approach is
motivated by a true telescoping series which appears when establishing
the corresponding uniform bound needed in the
proof of~\cite[Theorem~1.5]{semicircle}.
We stress that exactly the same idea is employed and
works in our subsequent analysis for the Jacobi polynomials, which
might not be immediately evident due to the lengthiness of the
expressions involved.

We are finally in a position to derive the asymptotic error in the
eigenfunction expansion for the Green's function corresponding to the
associated Laguerre polynomials.
\begin{proof}[Proof of Theorem~\ref{thm:laguerre}]
  We start by considering the convergence on the diagonal for which we
  study the functions $S_N\colon (0,\infty)\to\R$ given by, for $N\in\N_0$,
  \begin{displaymath}
    S_N(x)=\sqrt{N}\sum_{n=N+1}^\infty
    \frac{\Gamma(n)\left(L_n^{(\alpha)}(x)\right)^2}
    {\Gamma(n+\alpha+1)}
  \end{displaymath}
  in the limit $N\to\infty$. As a consequence of
  Lemma~\ref{lem:AL_unifbound2},
  we have, for all $x\in(0,\infty)$,
  \begin{displaymath}
    S_N'(x)= \sqrt{N}\sum_{n=N+1}^\infty
    \frac{2\,\Gamma(n)}{\Gamma(n+\alpha+1)}L_n^{(\alpha)}(x)
    \frac{\db L_n^{(\alpha)}(x)}{\db x}\;.
  \end{displaymath}
  Therefore, Lemma~\ref{lem:AL_unifbound1},
  Lemma~\ref{lem:AL_unifbound2} and the Arzel\`{a}--Ascoli theorem
  imply that the functions $S_N$ converge pointwise
  as $N\to\infty$ to the claimed
  limit function on $(0,\infty)$
  provided we establish the convergence of moments.
  We consider the moments defined by, for $k\in\N_0$,
  \begin{displaymath}
    p_k=\lim_{N\to\infty}\int_0^\infty
    x^kx^{2\alpha+1}\e^{-2x}S_N(x)\dd x\;,
  \end{displaymath}
  where the integrals are well-defined since $2\alpha+1>-1$. As
  positivity of the terms in the series allows us to interchange
  integration and summation, the moments defined above are related to
  the limit moments
  considered in Proposition~\ref{propn:moments} by
  \begin{displaymath}
    p_k=m_{k+1}\;.
  \end{displaymath}
  For all $l\in\N$, we observe that the function
  \begin{displaymath}
    x\mapsto \left(\left(x^lx^{\alpha+1}\e^{-x}\right)'
      x^{\alpha+1}\e^{-x}\right)'x^{-\alpha}\e^{x}
  \end{displaymath}
  is bounded on $(0,\infty)$ since $l+\alpha>0$. Using integration by
  parts as well as~(\ref{eq:AL_bc1}) and (\ref{eq:AL_bc2}), we can
  then argue that, as $n\to\infty$,
  \begin{align*}
    &\int_0^\infty\left(x^l x^{\alpha+1}\e^{-x}\right)'
    x^{\alpha+1}\e^{-x}L_n^{(\alpha)}(x)
    \frac{\db L_n^{(\alpha)}(x)}{\db x}\dd x\\
    &\qquad=-\frac{1}{2}\int_0^\infty
    \left(\left(x^lx^{\alpha+1}\e^{-x}\right)'x^{\alpha+1}\e^{-x}\right)'
      \left(L_n^{(\alpha)}(x)\right)^2\dd x
      =O\left(M_n\right)\;.
  \end{align*}
  While we do not consider the limit moment $m_0$, we still have to
  check that the condition~(\ref{eq:ac_COP}) in
  Proposition~\ref{propn:moments} is satisfied for $l=0$ since, as we
  see below, this relates the limit moments $m_1$ and $m_2$.
  Applying Lemma~\ref{lem:AL_ind} prior to integrating by parts, we
  obtain, for $n\in\N_0$,
  \begin{align*}
    &\int_0^\infty\left(x^{\alpha+1}\e^{-x}\right)'
    x^{\alpha+1}\e^{-x}L_n^{(\alpha)}(x)
    \frac{\db L_n^{(\alpha)}(x)}{\db x}\dd x\\
    &\qquad=(\alpha+1)\int_0^\infty x^{2\alpha+1}\e^{-2x}L_n^{(\alpha)}(x)
      \frac{\db L_n^{(\alpha)}(x)}{\db x}\dd x
      -\int_0^\infty x^{2\alpha+2}\e^{-2x}L_n^{(\alpha)}(x)
      \frac{\db L_n^{(\alpha)}(x)}{\db x}\dd x\\
    &\qquad
      =\frac{1}{2}\int_0^\infty
      \left(x^{2\alpha+2}\e^{-2x}\right)'
      \left(L_n^{(\alpha)}(x)\right)^2\dd x\;.
  \end{align*}
  Due to the function
  $x\mapsto \left(x^{2\alpha+2}\e^{-2x}\right)'x^{-\alpha}\e^{x}$ being
  bounded on $(0,\infty)$, it follows that, as $n\to\infty$,
  \begin{displaymath}
    \int_0^\infty\left(x^{\alpha+1}\e^{-x}\right)'
    x^{\alpha+1}\e^{-x}L_n^{(\alpha)}(x)
    \frac{\db L_n^{(\alpha)}(x)}{\db x}\dd x
    =O\left(M_n\right)\;,
  \end{displaymath}
  as needed. Together with (\ref{eq:AL_bc1}) and (\ref{eq:AL_bc2}),
  this means that Proposition~\ref{propn:moments} yields, for all
  $k\in\N_0$,
  \begin{displaymath}
    m_{k+2}=
    \left(\alpha+k+\frac{3}{2}\right)m_{k+1}
    \quad\text{and}\quad
    p_{k+1}=
    \left(\alpha+k+\frac{3}{2}\right)p_{k}\;.
  \end{displaymath}
  Even though we could show that Proposition~\ref{propn:conv_mom} gives rise
  to the same recurrence relation for suitably weighted moments of the
  claimed limit function for the full parameter range $\alpha>-1$, it
  is easier to
  establish this directly. We have, for all $k\in\N_0$,
  \begin{displaymath}
    \widetilde{p}_{k+1}=
    \frac{1}{\pi}\int_0^\infty x^{k+1}x^{\alpha+\frac{1}{2}}\e^{-x}\dd x
    =\frac{\alpha+k+\frac{3}{2}}{\pi}\int_0^\infty
    x^{k}x^{\alpha+\frac{1}{2}}\e^{-x}\dd x
    =\left(\alpha+k+\frac{3}{2}\right)\widetilde{p}_{k}\;.
  \end{displaymath}
  Thus, the desired pointwise convergence on the diagonal follows once
  we know that $p_0=\widetilde{p}_0$. From Lemma~\ref{lem:AL_initial}
  and the asymptotics~(\ref{eq:Gamma_asymp}) for the Gamma function,
  we see that, as $n\to\infty$,
  \begin{displaymath}
    \frac{\Gamma(n)}{\Gamma(n+\alpha+1)}
    \int_0^\infty x^{2\alpha+1}\e^{-2x}
    \left(L_{n}^{(\alpha)}(x)\right)^2\dd x
    =\frac{\Gamma(n)\Gamma\left(n+\frac{1}{2}\right)
      \Gamma\left(\alpha+\frac{3}{2}\right)}
    {2\pi\left(\Gamma(n+1)\right)^2}
    \sim\frac{\Gamma\left(\alpha+\frac{3}{2}\right)}
    {2\pi n^{\frac{3}{2}}}\;.
  \end{displaymath}
  It follows that
  \begin{displaymath}
    p_0=\lim_{N\to\infty}
    \sqrt{N}\sum_{n=N+1}^\infty
    \frac{\Gamma\left(\alpha+\frac{3}{2}\right)}
    {2\pi n^{\frac{3}{2}}}
    =\frac{\Gamma\left(\alpha+\frac{3}{2}\right)}{\pi}\;,
  \end{displaymath}
  as needed because of
  \begin{displaymath}
    \widetilde{p}_0=\frac{1}{\pi}\int_0^\infty
    x^{\alpha+\frac{1}{2}}\e^{-x}\dd x
    =\frac{\Gamma\left(\alpha+\frac{3}{2}\right)}{\pi}\;.
  \end{displaymath}
  We are left with proving the convergence away from the diagonal.
  Fix $x,y\in(0,\infty)$ with $x\not= y$.
  By Fej\'er's formula and due to the
  asymptotics~(\ref{eq:Gamma_asymp}), there
  exists a positive constant $C\in\R$ such that, for all $n\in\N$,
  \begin{equation}\label{eq:AL_diffbound}
    \left|\frac{\Gamma(n+2)}{\Gamma(n+\alpha+1)}
      \frac{L_{n+1}^{(\alpha)}(x)L_n^{(\alpha)}(y)-
        L_n^{(\alpha)}(x)L_{n+1}^{(\alpha)}(y)}{n}
    \right|
    \leq Cn^{-\frac{1}{2}}\;.
  \end{equation}
  Thus, applying
  Proposition~\ref{propn:CDT} to the associated Laguerre polynomials
  and using
  \begin{displaymath}
    \frac{1}{\lambda_{n}}-\frac{1}{\lambda_{n+1}}
    =\frac{1}{n}-\frac{1}{n+1}
    =\frac{1}{n(n+1)}\;,
  \end{displaymath}
  we obtain, for $N\in\N$,
  \begin{displaymath}
    (x-y)\sum_{n=N+1}^\infty
    \frac{\Gamma(n) L_n^{(\alpha)}(x)L_n^{(\alpha)}(y)}
    {\Gamma(n+\alpha+1)}
    =\frac{\Gamma(N+2)}{\Gamma(N+\alpha+1)}
    \frac{D_{N+1}(x,y)}{N}-\sum_{n=N}^\infty
    \frac{\Gamma(n)D_{n+1}(x,y)}{\Gamma(n+\alpha+1)}\;.
  \end{displaymath}
  Since~(\ref{eq:AL_diffbound}) gives
  \begin{displaymath}
    \left|\frac{\Gamma(n)D_{n+1}(x,y)}{\Gamma(n+\alpha+1)}\right|
    \leq Cn^{-\frac{3}{2}}\;,
  \end{displaymath}
  we deduce
  \begin{displaymath}
    \left|(x-y)\sum_{n=N+1}^\infty
      \frac{\Gamma(n) L_n^{(\alpha)}(x)L_n^{(\alpha)}(y)}
      {\Gamma(n+\alpha+1)}\right|
    \leq C\left(N^{-\frac{1}{2}}+\sum_{n=N}^\infty n^{-\frac{3}{2}}
    \right)
    \leq\frac{4C}{\sqrt{N}}\;.
  \end{displaymath}
  This shows that with $x\not= y$, for all $\gamma<\frac{1}{2}$,
  \begin{equation}\label{eq:AL_offdiag}
    \lim_{N\to\infty}N^\gamma
    \sum_{n=N+1}^\infty
    \frac{\Gamma(n) L_n^{(\alpha)}(x)L_n^{(\alpha)}(y)}
    {\Gamma(n+\alpha+1)}=0\;,
  \end{equation}
  as claimed.
\end{proof}
The reason why we obtain a weaker off-diagonal convergence result for the
associated Laguerre polynomials compared to the Hermite polynomials is
that Fej\'er's formula only allows us to obtain the
bound~(\ref{eq:AL_diffbound}) whereas for the Hermite polynomials we
have~(\ref{eq:H_diffbound}). It would be of interest to investigate
if~(\ref{eq:AL_offdiag}) remains true for all $\gamma<1$ provided
$x\not=y$.

\subsection{Jacobi polynomials}
\label{sec:jacobi}

The family $\{P_n^{(\alpha,\beta)}:n\in\N_0\}$ of Jacobi polynomials
on $[-1,1]$ is given
in terms of two parameters $\alpha$ and $\beta$.
While the Jacobi
polynomials can be defined for all $\alpha,\beta\in\R$ via
the rising Pochhammer symbol by, for $n\in\N_0$ and $x\in[-1,1]$,
\begin{displaymath}
  P_n^{(\alpha,\beta)}(x)
  =\frac{1}{n!}\sum_{k=0}^n\binom{n}{k}
  \left(n+\alpha+\beta+1\right)_k\left(\alpha+k+1\right)_{n-k}
  \left(\frac{x-1}{2}\right)^k\;,
\end{displaymath}
see~\cite[Section~4.22]{szego}, we only obtain
classical orthogonal polynomials for $\alpha,\beta>-1$. In that
regime,
the Jacobi polynomials are the classical orthogonal
polynomials on $I=[-1,1]$ with respect to the weight function
$W\colon (-1,1)\to\R$ given by
\begin{displaymath}
  W(x)=(1-x)^\alpha(1+x)^\beta
\end{displaymath}
subject to the standardisation
\begin{displaymath}
  P_n^{(\alpha,\beta)}(1)
  =\frac{\Gamma(n+\alpha+1)}{n!\,\Gamma(\alpha+1)}\;.
\end{displaymath}
The class of Jacobi
polynomials is a rich class of orthogonal polynomials which gives
rise, for instance,
to the Legendre polynomials $\{P_n:n\in\N_0\}$,
to the Chebyshev polynomials of the first kind $\{T_n:n\in\N_0\}$ and
to the
Chebyshev polynomials of the second kind $\{U_n:n\in\N_0\}$ through,
for $n\in\N_0$,
\begin{equation}\label{eq:J_special}
  P_n=P_n^{(0,0)}\;,\quad
  T_n=\frac{n!\sqrt{\pi}}
  {\Gamma\left(n+\frac{1}{2}\right)}
  P_n^{(-\frac{1}{2},-\frac{1}{2})}
  \quad\text{and}\quad
  U_n=\frac{(n+1)!\sqrt{\pi}}
  {2\,\Gamma\left(n+\frac{3}{2}\right)}
  P_n^{(\frac{1}{2},\frac{1}{2})}\;.
\end{equation}
If $\alpha,\beta>-1$, the weight function $W$ is integrable on
$(-1,1)$ and, as derived for~\cite[4.3.3]{szego}, we obtain, for
$n\in\N$,
\begin{equation}\label{eq:J_defnM}
  M_n=\frac{2^{\alpha+\beta+1}\Gamma(n+\alpha+1)\Gamma(n+\beta+1)}
  {n!(2n+\alpha+\beta+1)\Gamma(n+\alpha+\beta+1)}\;.
\end{equation}
Note that if $\alpha,\beta\in\R$, the above expression remains
well-defined for sufficiently large $n\in\N$, and we can
take~(\ref{eq:J_defnM}) as the definition for $M_n$
as long as $n+1>-\min(\alpha,\beta,\alpha+\beta)$
whenever we are outside
the regime $\alpha,\beta>-1$. Moreover, for $\alpha,\beta\in\R$ fixed,
the Jacobi polynomials satisfy, for $n\in\N_0$,
\begin{displaymath}
  K_n=\frac{1}{2^n}\binom{2n+\alpha+\beta}{n}\;,
\end{displaymath}
see~\cite[4.21.6]{szego}, and they solve the
differential equation, for $x\in[-1,1]$,
\begin{displaymath}
  \left(1-x^2\right)\,\frac{\db^2P_n^{(\alpha,\beta)}(x)}{\db x^2}+
  \left(\beta-\alpha-(\alpha+\beta+2)x\right)\,
  \frac{\db P_n^{(\alpha,\beta)}(x)}{\db x}+
  n(n+\alpha+\beta+1) P_n^{(\alpha,\beta)}(x)=0\;,
\end{displaymath}
see~\cite[Theorem 4.2.2]{szego}. This implies that
\begin{displaymath}
  Q(x)=1-x^2\;,\quad
  L(x)=\beta-\alpha-(\alpha+\beta+2)x
  \quad\text{and}\quad
  \lambda_n=n(n+\alpha+\beta+1)\;.
\end{displaymath}
Similar to our study of the asymptotic error in the eigenfunction expansion
for the Green's function associated with the Hermite polynomials and
with the associated Laguerre polynomials, we could use
Proposition~\ref{propn:moments} and Proposition~\ref{propn:conv_mom}
to show that for Jacobi polynomials, certainly if
$\alpha,\beta\geq 0$, the limit moments on the
diagonal satisfy the same recurrence relation as the weighted moments
of the claimed limit function. However, this approach would require us
to obtain asymptotic control, as $n\to\infty$, over
\begin{displaymath}
  \int_{-1}^1(1-x)^{2\alpha}(1+x)^{2\beta}
  \left(P_n^{(\alpha,\beta)}(x)\right)^2\dd x
  \quad\text{and}\quad
  \int_{-1}^1x(1-x)^{2\alpha}(1+x)^{2\beta}
  \left(P_n^{(\alpha,\beta)}(x)\right)^2\dd x\;,
\end{displaymath}
which does not appear to be straightforward. Instead, we exploit the asymptotic
formula which is known for the Jacobi polynomials more than we did use
the asymptotic formulae for the
Hermite polynomials and for the associated Laguerre polynomials. The
benefits of this approach are that it links to the analysis performed
in Section~\ref{sec:regular} and that it works for all
$\alpha,\beta\in\R$. In particular, no extra care is needed when
$\alpha,\beta\in(-1,-\frac{1}{2}]$ to ensure integrability of the
integrals we consider.
The Jacobi polynomials admit the following asymptotic
formula stated in~\cite[Theorem~8.21.8]{szego}, which is due to
Darboux~\cite{darboux}.
\begin{thm}[Darboux formula]\label{thm:darboux}
  Let $\alpha,\beta\in\R$ be fixed.
  For $\theta\in(0,\pi)$, set
  \begin{equation*}
    k(\theta)=\pi^{-\frac{1}{2}}
    \left(\sin\left(\frac{\theta}{2}\right)\right)^{-\alpha-\frac{1}{2}}
    \left(\cos\left(\frac{\theta}{2}\right)\right)^{-\beta-\frac{1}{2}}\;.
  \end{equation*}
  We have, as $n\to\infty$,
  \begin{equation*}
    P_n^{(\alpha,\beta)}\left(\cos\left(\theta\right)\right)=
    n^{-\frac{1}{2}}k(\theta)
    \cos\left(\left(n+\frac{\alpha+\beta+1}{2}\right)\theta
      -\left(\alpha+\frac{1}{2}\right)\frac{\pi}{2}\right)
    +O\left(n^{-\frac{3}{2}}\right)\;,
  \end{equation*}
  where the bound on the error term is uniform
  in $\theta\in[\eps,\pi-\eps]$ for $\eps>0$.
\end{thm}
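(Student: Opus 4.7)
The plan is to prove Darboux's formula by transforming the Jacobi differential equation into Liouville normal form on $(0,\pi)$, applying the modified Prüfer analysis from Section~\ref{sec:asym} in the bulk, and fixing the amplitude and phase by a boundary layer match to a Bessel function near $\theta=0$.

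First, I substitute $x=\cos\theta$ and set $u_n(\theta) = (\sin(\theta/2))^{\alpha+\frac{1}{2}}(\cos(\theta/2))^{\beta+\frac{1}{2}} P_n^{(\alpha,\beta)}(\cos\theta)$. Removing the first-derivative term from the Jacobi ODE via the standard exponential prefactor yields
\[
u_n''(\theta) + \left[N_n^2 + \frac{\tfrac{1}{4}-\alpha^2}{4\sin^2(\theta/2)} + \frac{\tfrac{1}{4}-\beta^2}{4\cos^2(\theta/2)}\right] u_n(\theta) = 0,
\]
with $N_n = n + (\alpha+\beta+1)/2$. On every $[\eps,\pi-\eps]\subset(0,\pi)$ the bracketed coefficient takes the form $N_n^2 - q(\theta)$ with $q$ smooth and bounded, so this is a regular Sturm--Liouville equation in Liouville normal form with large spectral parameter $N_n^2$. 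Applying the modified Prüfer substitution and comparison argument used in the proof of Proposition~\ref{propn:efuns} (with $\lambda = N_n^2$ and initial data fixed at $\theta=\pi/2$) gives, uniformly in $\theta\in[\eps,\pi-\eps]$ and as $n\to\infty$,
\[
u_n(\theta) = A_n\cos(N_n\theta + \phi_n) + O\!\left(n^{-3/2}\right),
\]
for constants $A_n,\phi_n$ depending on $n,\alpha,\beta$. The improved $O(n^{-3/2})$ absolute rate is not an improvement of the Prüfer comparison; rather, $u_n$ is not $L^2$-normalised, its amplitude will turn out to be $O(n^{-1/2})$, and the $O(n^{-1})$ relative Prüfer error then translates into an $O(n^{-3/2})$ absolute error.

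To determine $A_n$ and $\phi_n$, I perform a boundary layer analysis near $\theta=0$. Rescaling $\xi = N_n\theta$ and using $\sin(\theta/2) = \theta/2 + O(\theta^3)$, the ODE for $\tilde u_n(\xi) = u_n(\xi/N_n)$ reduces, at leading order, to the Bessel equation
\[
\tilde u_n''(\xi) + \left[1 + \frac{\tfrac{1}{4}-\alpha^2}{\xi^2}\right]\tilde u_n(\xi) = O\!\left(N_n^{-2}\right)\tilde u_n(\xi),
\]
whose solution regular at $\xi=0$ is proportional to $\sqrt{\xi}\,J_\alpha(\xi)$. The standardisation $P_n^{(\alpha,\beta)}(1) = \Gamma(n+\alpha+1)/(n!\,\Gamma(\alpha+1))$ forces $u_n(\theta) \sim (\theta/2)^{\alpha+\frac{1}{2}}\,\Gamma(n+\alpha+1)/(n!\,\Gamma(\alpha+1))$ as $\theta\to 0^+$, and matching with the small-argument expansion $\sqrt{\xi}\,J_\alpha(\xi) \sim \xi^{\alpha+1/2}/(2^\alpha\Gamma(\alpha+1))$ pins down the proportionality constant $C_n = \Gamma(n+\alpha+1)/(n!\,\sqrt{2}\,N_n^{\alpha+1/2})$ in $\tilde u_n(\xi) = C_n\sqrt{\xi}\,J_\alpha(\xi)$. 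The large-argument expansion $\sqrt{\xi}\,J_\alpha(\xi) = \sqrt{2/\pi}\cos(\xi - \alpha\pi/2 - \pi/4) + O(1/\xi)$ then matches the outer Prüfer form, giving $\phi_n = -(\alpha+\tfrac{1}{2})\pi/2$ and, via Stirling's formula~(\ref{eq:stirling}) in the form $\Gamma(n+\alpha+1)/(n!\,N_n^\alpha) = 1 + O(n^{-1})$, the amplitude $A_n = \pi^{-1/2}\,N_n^{-1/2}(1+O(n^{-1}))$. Dividing $u_n$ by the prefactor $(\sin(\theta/2))^{\alpha+1/2}(\cos(\theta/2))^{\beta+1/2} = 1/(\sqrt{\pi}\,k(\theta))$, absorbing the $(1+O(n^{-1}))$ factor into the $O(n^{-3/2})$ error term, and using $N_n^{-1/2} = n^{-1/2} + O(n^{-3/2})$, recovers the claimed formula.

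The main obstacle will be the rigorous matching of the inner (Bessel) and outer (Prüfer) asymptotics with sufficient uniform control to simultaneously justify the leading term and the $O(n^{-3/2})$ remainder. The cleanest tool is Olver's error-controlled Liouville--Green theory with Bessel comparison functions on a neighbourhood of the regular singular point at $\theta=0$, which produces a single uniform asymptotic on $(0,\pi/2]$ with explicit error bounds in place of separately controlled inner and outer expansions; an analogous treatment at $\theta=\pi$ (with $\alpha$ and $\beta$ swapped) handles the other half of the interval and provides an independent consistency check on the amplitude, the phase being determined solely by the behaviour at the left endpoint because only $\alpha$ appears in $\phi_n$.
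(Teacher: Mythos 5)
The paper does not prove Theorem~\ref{thm:darboux} at all: it is quoted verbatim from Szeg\H{o}~\cite[Theorem~8.21.8]{szego}, where it is obtained by Darboux's method applied to the generating function of the Jacobi polynomials, i.e.\ by a complex-analytic comparison of singularities. Your route is therefore genuinely different: it is the ODE (Liouville--Stekloff / Liouville--Green) approach, which in effect reproves the stronger Hilb-type formula of \cite[Theorem~8.21.12]{szego} and then specialises to compact subintervals of $(0,\pi)$. The computations you carry out are consistent and correct: the normal form $u_n''+\bigl[N_n^2+\tfrac{1/4-\alpha^2}{4\sin^2(\theta/2)}+\tfrac{1/4-\beta^2}{4\cos^2(\theta/2)}\bigr]u_n=0$ is Szeg\H{o}'s (4.24.2), the inner Bessel equation, the normalisation constant $C_n$, the phase $-(\alpha+\tfrac12)\tfrac{\pi}{2}$, and the bookkeeping that converts the $O(n^{-1})$ relative Pr\"ufer error into an $O(n^{-3/2})$ absolute error all check out. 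What each approach buys: the generating-function method gives the fixed-$\theta$ expansion to all orders with little effort but no uniformity near the endpoints, while your method, once made rigorous, yields uniform control down to $\theta=O(1/n)$, which is more than the paper needs.

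That said, the proposal as written has one genuine gap and one overlooked case. The gap is the central analytic step: the rigorous matching of the inner Bessel solution to the outer Pr\"ufer form with a uniform $O(n^{-3/2})$ remainder is described ("the main obstacle") and outsourced to Olver's error-controlled theory with Bessel comparison functions, but not actually performed; in particular you never justify that the connection coefficients between the inner and outer regimes are $1+O(n^{-1})$ rather than merely $1+o(1)$, which is exactly what the claimed error term requires. The overlooked case concerns exceptional parameters: you identify the relevant inner solution as "the solution regular at $\xi=0$", proportional to $\sqrt{\xi}\,J_\alpha(\xi)$, by matching the exponent $\alpha+\tfrac12$. For $2\alpha\in\Z$ the two Frobenius exponents $\pm\alpha+\tfrac12$ differ by an integer (and for $\alpha$ a negative integer one has $J_{-m}=(-1)^mJ_m$ and a second solution with a logarithm), so the solution is not pinned down by its leading power alone, and for $\alpha\le -1$ the normalisation $P_n^{(\alpha,\beta)}(1)=\Gamma(n+\alpha+1)/(n!\,\Gamma(\alpha+1))$ degenerates for small $n$. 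Since the paper invokes the theorem for all $\alpha,\beta\in\R$ (notably $\alpha=\beta=-1$ when deducing \cite[Theorem~1.5]{semicircle}), these cases cannot be waved away; they are handled automatically by the generating-function proof the paper cites.
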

To extend our considerations to $\alpha,\beta\in\R$, we further need
to observe that the Jacobi polynomials satisfy the three-term
recurrence relation~\cite[4.5.1]{szego} for all
$\alpha,\beta\in\R$. This implies that the proof of
Proposition~\ref{propn:CDT} can be adapted to give us sufficient
control to deduce the off-diagonal convergence
for all families
of Jacobi polynomials.

Besides, as derived for~\cite[4.21.7]{szego}, we have the identity that,
for all $n\in\N$ and for all $x\in[-1,1]$,
\begin{equation}\label{eq:J_diff}
  \frac{\db}{\db x}P_n^{(\alpha,\beta)}(x)
  =\frac{1}{2}\left(n+\alpha+\beta+1\right)
  P_{n-1}^{(\alpha+1,\beta+1)}(x)\;.
\end{equation}
As previously, we still require two local uniform bounds to establish the
on-diagonal convergence in Theorem~\ref{thm:jacobi}, and one of the two
bounds is
obtained through a telescoping-like rearrangement relying on the next
lemma.
\begin{lemma}\label{lem:J_diff}
  For $\alpha,\beta\in\R$ fixed, we have, for
  all $n\in\N$ with $n\geq 2$ and for all $x\in[-1,1]$,
  \begin{align*}
    &(2n+\alpha+\beta)(2n+\alpha+\beta+1)
    (2n+\alpha+\beta+2)P_n^{(\alpha,\beta)}(x)\\
    &\qquad=(2n+\alpha+\beta)(n+\alpha+\beta+1)(n+\alpha+\beta+2)
      P_n^{(\alpha+1,\beta+1)}(x)\\
    &\qquad\qquad+(2n+\alpha+\beta+1)(\alpha-\beta)(n+\alpha+\beta+1)
      P_{n-1}^{(\alpha+1,\beta+1)}(x)\\
    &\qquad\qquad-(2n+\alpha+\beta+2)(n+\alpha)(n+\beta)
      P_{n-2}^{(\alpha+1,\beta+1)}(x)\;.
  \end{align*}
\end{lemma}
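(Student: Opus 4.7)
The plan is to establish the identity first in the regime $\alpha,\beta>-1$, where orthogonality is available, and then to extend to all $\alpha,\beta\in\R$ by analytic continuation. From the explicit formula for $P_n^{(\alpha,\beta)}(x)$ given in the introduction, both sides of the asserted identity are, for each fixed $n\in\N$ and $x\in[-1,1]$, polynomials in the parameters $\alpha$ and $\beta$ of bounded degree, so an identity proved on the open set $\{\alpha,\beta>-1\}$ automatically holds on all of $\R^2$.

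In this regime, the family $\{P_k^{(\alpha+1,\beta+1)}:k\in\N_0\}$ is an orthogonal basis of $L^2([-1,1],(1-x)^{\alpha+1}(1+x)^{\beta+1}\dd x)$, and hence admits a unique expansion
\begin{displaymath}
  P_n^{(\alpha,\beta)}(x)=\sum_{k=0}^n c_{n,k}\,P_k^{(\alpha+1,\beta+1)}(x)\;,
\end{displaymath}
whose coefficients are given by
\begin{displaymath}
  c_{n,k}\int_{-1}^1\bigl(P_k^{(\alpha+1,\beta+1)}(x)\bigr)^2(1-x)^{\alpha+1}(1+x)^{\beta+1}\dd x
  =\int_{-1}^1 P_n^{(\alpha,\beta)}(x)\,(1-x^2)P_k^{(\alpha+1,\beta+1)}(x)\,(1-x)^{\alpha}(1+x)^{\beta}\dd x\;.
\end{displaymath}
Because $(1-x^2)P_k^{(\alpha+1,\beta+1)}(x)$ is a polynomial of degree $k+2$, and because $P_n^{(\alpha,\beta)}$ is orthogonal with respect to the weight $(1-x)^{\alpha}(1+x)^{\beta}$ to every polynomial of degree strictly less than $n$, the right-hand side vanishes whenever $k\leq n-3$. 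The expansion therefore collapses to the claimed three-term form supported on $k\in\{n-2,n-1,n\}$.

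The three surviving coefficients are then pinned down by three independent linear conditions. Comparing leading coefficients via $K_n^{(\alpha,\beta)}=2^{-n}\binom{2n+\alpha+\beta}{n}$ gives $c_{n,n}$ directly as $(n+\alpha+\beta+1)(n+\alpha+\beta+2)/((2n+\alpha+\beta+1)(2n+\alpha+\beta+2))$, matching the coefficient of $P_n^{(\alpha+1,\beta+1)}$ in the lemma after the common factor $(2n+\alpha+\beta)(2n+\alpha+\beta+1)(2n+\alpha+\beta+2)$ is factored out. The evaluations at $x=1$ and $x=-1$, using the standard Szeg\H{o} values $P_n^{(\alpha,\beta)}(1)=\binom{n+\alpha}{n}$ and $P_n^{(\alpha,\beta)}(-1)=(-1)^n\binom{n+\beta}{n}$ applied to each term, then yield two linear equations for the remaining unknowns $c_{n,n-1}$ and $c_{n,n-2}$. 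The main obstacle is purely algebraic: simplifying the resulting ratios of Gamma functions and binomial coefficients down to the clean symmetric expressions $(\alpha-\beta)(n+\alpha+\beta+1)/((2n+\alpha+\beta)(2n+\alpha+\beta+2))$ and $-(n+\alpha)(n+\beta)/((2n+\alpha+\beta)(2n+\alpha+\beta+1))$. This bookkeeping is streamlined by introducing the shorthand $s=2n+\alpha+\beta$ and using the Pochhammer identities $(s)(s+1)=(s+1)_2$ and $(n+\alpha)(n+\beta)-s(n+\alpha+\beta)/2=\cdots$ to reduce both coefficients to the stated form, completing the proof in the regime $\alpha,\beta>-1$ and hence, by the polynomial identity argument, for all $\alpha,\beta\in\R$.
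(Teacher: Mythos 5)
Your argument is sound and takes a genuinely different route from the paper. The paper proves the lemma in a few lines by combining two contiguous parameter-shift relations quoted from Rainville, namely $(2n+\alpha+\beta+1)P_n^{(\alpha,\beta)}=(n+\alpha+\beta+1)P_n^{(\alpha,\beta+1)}+(n+\alpha)P_{n-1}^{(\alpha,\beta+1)}$ and its $(\alpha+1,\beta)$ counterpart, substituting shifted instances of the second into the first; this is purely algebraic, needs no orthogonality, and holds verbatim for all $\alpha,\beta\in\R$. You instead treat the identity as a connection-coefficient problem: for $\alpha,\beta>-1$ expand $P_n^{(\alpha,\beta)}$ in the basis $\{P_k^{(\alpha+1,\beta+1)}\}$, use $(1-x)^{\alpha+1}(1+x)^{\beta+1}=(1-x^2)(1-x)^{\alpha}(1+x)^{\beta}$ together with orthogonality of $P_n^{(\alpha,\beta)}$ against polynomials of degree below $n$ to kill all coefficients with $k\leq n-3$, pin down $c_{n,n}$ by leading coefficients and $c_{n,n-1},c_{n,n-2}$ by the values at $x=\pm 1$, and then remove the restriction $\alpha,\beta>-1$ by noting both sides are polynomials in $(\alpha,\beta)$. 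That continuation step is correctly identified and is genuinely needed in your approach, whereas the paper never leaves the polynomial-identity setting; what your route buys is self-containedness (no citation of contiguous relations) and a conceptual explanation of why exactly three terms appear, at the price of more bookkeeping.

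Two points should be tightened to make the write-up complete. First, the determination of $c_{n,n-1}$ and $c_{n,n-2}$ from the evaluations at $x=\pm1$ requires the $2\times 2$ system to be nonsingular; this does hold for $\alpha,\beta>-1$ since its determinant $\binom{n+\alpha}{n-1}\binom{n+\beta-1}{n-2}+\binom{n+\beta}{n-1}\binom{n+\alpha-1}{n-2}$ is a sum of positive terms there, but you should say so. Second, the closing algebra is only gestured at, and the displayed relation ``$(n+\alpha)(n+\beta)-s(n+\alpha+\beta)/2=\cdots$'' trails off without content; the cleanest fix is not to solve the system at all but to verify directly that the lemma's coefficients $\tfrac{(n+\alpha+\beta+1)(n+\alpha+\beta+2)}{(2n+\alpha+\beta+1)(2n+\alpha+\beta+2)}$, $\tfrac{(\alpha-\beta)(n+\alpha+\beta+1)}{(2n+\alpha+\beta)(2n+\alpha+\beta+2)}$ and $-\tfrac{(n+\alpha)(n+\beta)}{(2n+\alpha+\beta)(2n+\alpha+\beta+1)}$ satisfy the two endpoint equations, which by uniqueness of the expansion finishes the proof; that check is a short Gamma-function computation and should be written out.
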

\begin{proof}
  According to~\cite[\S 138. (14), (15)]{rainville}, we have, for all
  $n\in\N$ and for all $x\in[-1,1]$,
  \begin{equation}\label{eq:J_shift1}
    (2n+\alpha+\beta+1)P_n^{(\alpha,\beta)}(x)
    =(n+\alpha+\beta+1)P_n^{(\alpha,\beta+1)}(x)
    +(n+\alpha)P_{n-1}^{(\alpha,\beta+1)}(x)
  \end{equation}
  as well as
  \begin{equation}\label{eq:J_shift2}
    (2n+\alpha+\beta+1)P_n^{(\alpha,\beta)}(x)
    =(n+\alpha+\beta+1)P_n^{(\alpha+1,\beta)}(x)
    -(n+\beta)P_{n-1}^{(\alpha+1,\beta)}(x)\;.
  \end{equation}
  From~(\ref{eq:J_shift2}), it follows that, for $n\in\N$,
  \begin{displaymath}
    (2n+\alpha+\beta+2)P_n^{(\alpha,\beta+1)}(x)
    =(n+\alpha+\beta+2)P_n^{(\alpha+1,\beta+1)}(x)
    -(n+\beta+1)P_{n-1}^{(\alpha+1,\beta+1)}(x)
  \end{displaymath}
  and, for $n\in\N$ with $n\geq 2$,
  \begin{displaymath}
    (2n+\alpha+\beta)P_{n-1}^{(\alpha,\beta+1)}(x)
    =(n+\alpha+\beta+1)P_{n-1}^{(\alpha+1,\beta+1)}(x)
    -(n+\beta)P_{n-2}^{(\alpha+1,\beta+1)}(x)\;.
  \end{displaymath}
  Substituting these two identities into~(\ref{eq:J_shift1}) yields
  the claimed result.
\end{proof}
We observe that, due to the asymptotics~(\ref{eq:Gamma_asymp}) for the
Gamma function, as $n\to\infty$,
\begin{displaymath}
  M_n \sim \frac{2^{\alpha+\beta}}{n}\;.
\end{displaymath}
In particular, the two local uniform bounds derived below are
sufficient for our purposes.
\begin{lemma}\label{lem:J_unifbound1}
  Fix $\eps\in\R$ with $\eps>0$ and fix $\alpha,\beta\in\R$. The family
  \begin{align*}
    \left\{N\sum_{n=N+1}^\infty
      \frac{P_n^{(\alpha,\beta)}(x)P_n^{(\alpha,\beta)}(y)}{n}:
      N\in\N\text{ and }x,y\in[-1+\eps,1-\eps]
    \right\}
  \end{align*}
  is uniformly bounded.
\end{lemma}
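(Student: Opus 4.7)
The plan is to use the Darboux formula, Theorem~\ref{thm:darboux}, to derive a uniform pointwise bound on the Jacobi polynomials $P_n^{(\alpha,\beta)}$ on compact subsets of $(-1,1)$, and then to sum the resulting estimate. Since the map $x\mapsto \arccos(x)$ sends $[-1+\eps,1-\eps]$ to a compact subinterval $[\eps',\pi-\eps']$ of $(0,\pi)$ on which both $\sin(\theta/2)$ and $\cos(\theta/2)$ are bounded away from zero, the function $k$ appearing in the Darboux formula is uniformly bounded on the corresponding image. Thus the Darboux formula yields a constant $C\in\R$, depending on $\eps$ as well as on $\alpha$ and $\beta$, such that, for all $n\in\N$ and for all $x\in[-1+\eps,1-\eps]$,
\begin{displaymath}
  \left|P_n^{(\alpha,\beta)}(x)\right|\leq \frac{C}{\sqrt{n}}\;.
\end{displaymath}

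With this bound in hand, the remainder of the argument is a routine comparison with a convergent series. I would estimate, for all $n\in\N$ and for all $x,y\in[-1+\eps,1-\eps]$,
\begin{displaymath}
  \left|\frac{P_n^{(\alpha,\beta)}(x)P_n^{(\alpha,\beta)}(y)}{n}\right|
  \leq \frac{C^2}{n^2}\;,
\end{displaymath}
and then use a telescoping estimate of the form
\begin{displaymath}
  N\sum_{n=N+1}^\infty\frac{1}{n^2}
  \leq N\sum_{n=N+1}^\infty\left(\frac{1}{n-1}-\frac{1}{n}\right)
  =\frac{N}{N}=1
\end{displaymath}
to conclude that the absolute value of the quantity displayed in the lemma is bounded by $C^2$ uniformly in $N\in\N$ and in $x,y\in[-1+\eps,1-\eps]$, as required.

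I do not anticipate any genuine obstacle here: the Darboux formula already provides the decay $n^{-1/2}$ uniformly on $[-1+\eps,1-\eps]$, and this decay combined with the factor $1/n$ appearing in the summand gives a summable series whose tail is $O(1/N)$, exactly compensating the prefactor $N$. The only point that warrants a brief comment is that, although Theorem~\ref{thm:darboux} is formulated as an asymptotic statement, the uniformity of the error term in $\theta\in[\eps',\pi-\eps']$ allows us to absorb the $O(n^{-3/2})$ term into the same constant $C$ after possibly enlarging it, so that the displayed estimate on $|P_n^{(\alpha,\beta)}(x)|$ holds for every $n\in\N$ rather than only for sufficiently large $n$.
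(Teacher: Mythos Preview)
Your proposal is correct and follows essentially the same approach as the paper: use the Darboux formula to obtain the uniform bound $|P_n^{(\alpha,\beta)}(x)|\leq C n^{-1/2}$ on $[-1+\eps,1-\eps]$, which gives summands of size $O(n^{-2})$ and hence a tail of size $O(N^{-1})$ compensating the prefactor $N$. The paper's proof is slightly terser but the argument is identical.
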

\begin{proof}
  As a consequence of the Darboux formula, see
  Theorem~\ref{thm:darboux}, there exists a positive constant $C\in\R$ such
  that, for all $n\in\N$ and for all $x,y\in[-1+\eps,1-\eps]$,
  \begin{displaymath}
    \left|P_n^{(\alpha,\beta)}(x)P_n^{(\alpha,\beta)}(y)\right|
    \leq Cn^{-1}\;.
  \end{displaymath}
  We conclude that, for all $N\in\N$,
  \begin{displaymath}
    \left|N\sum_{n=N+1}^\infty
      \frac{P_n^{(\alpha,\beta)}(x)P_n^{(\alpha,\beta)}(y)}{n}\right|
    \leq N\sum_{n=N+1}^\infty \frac{C}{n^{2}}\leq C\;,
  \end{displaymath}
  as required.
\end{proof}
\begin{lemma}\label{lem:J_unifbound2}
  Fix $\eps\in\R$ with $\eps>0$ and fix $\alpha,\beta\in\R$. The family
  \begin{align*}
    \left\{N\sum_{n=N+1}^\infty
      P_n^{(\alpha,\beta)}(x)P_{n-1}^{(\alpha+1,\beta+1)}(x):
      N\in\N\text{ and }x\in[-1+\eps,1-\eps]
    \right\}
  \end{align*}
  is uniformly bounded.
\end{lemma}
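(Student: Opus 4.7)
My plan is to mirror the telescoping-like rearrangement used in the proofs of Lemma~\ref{lem:H_unifbound2} and Lemma~\ref{lem:AL_unifbound2}. A direct application of the Darboux formula, Theorem~\ref{thm:darboux}, gives $|P_n^{(\alpha+1,\beta+1)}(x)| = O(n^{-1/2})$ uniformly for $x\in[-1+\eps,1-\eps]$, so that the summands $P_n^{(\alpha,\beta)}(x)P_{n-1}^{(\alpha+1,\beta+1)}(x)$ are individually only $O(n^{-1})$. Summing and multiplying by $N$ would then give at best a logarithmic growth, which is insufficient. The extra factor of $1/n$ needs to be extracted by manipulating the sum, and the identity provided by Lemma~\ref{lem:J_diff} is precisely what allows this.

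Writing $a_n=P_n^{(\alpha+1,\beta+1)}(x)$ and dividing the identity in Lemma~\ref{lem:J_diff} through by $(2n+\alpha+\beta)(2n+\alpha+\beta+1)(2n+\alpha+\beta+2)$, I would rewrite the summand, for $n\geq 2$, in the form
\begin{equation*}
P_n^{(\alpha,\beta)}(x)\,a_{n-1}=A_n\,a_na_{n-1}+B_n\,a_{n-1}^2+C_n\,a_{n-2}a_{n-1},
\end{equation*}
with explicit rational coefficients $A_n,B_n,C_n$ in $n$. A routine expansion then shows that, as $n\to\infty$,
\begin{equation*}
A_n=\frac{1}{4}+O\!\left(\frac{1}{n}\right),\qquad B_n=O\!\left(\frac{1}{n}\right),\qquad C_n=-\frac{1}{4}+O\!\left(\frac{1}{n}\right).
\end{equation*}
Shifting the summation index $n\mapsto n+1$ in the third term and separating off the resulting boundary term at $n=N+1$ produces the combined expression
\begin{equation*}
\sum_{n=N+1}^\infty P_n^{(\alpha,\beta)}(x)\,a_{n-1}
=\sum_{n=N+1}^\infty(A_n+C_{n+1})\,a_{n-1}a_n+\sum_{n=N+1}^\infty B_n\,a_{n-1}^2+C_{N+1}\,a_{N-1}a_N.
\end{equation*}

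The key observation is that the leading constants $\pm\tfrac{1}{4}$ cancel exactly, so that $A_n+C_{n+1}=O(1/n)$. Combining this with the Darboux bound $|a_n|,|a_{n-1}|=O(n^{-1/2})$ makes every summand in the two remaining series of order $O(n^{-2})$, and the boundary term is of order $O(1/N)$. Hence the full series on the left is $O(1/N)$ uniformly in $x\in[-1+\eps,1-\eps]$, and multiplying by $N$ yields the claimed uniform boundedness.

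The main obstacle to this plan is verifying the crucial cancellation $A_n+C_{n+1}=O(1/n)$, which requires computing the next-to-leading terms in the expansions of the two rational functions and checking their sum; everything else is a routine combination of reindexing, elementary asymptotics, and the Darboux formula.
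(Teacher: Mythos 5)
Your proposal is correct and follows essentially the same route as the paper: rewrite the summand via Lemma~\ref{lem:J_diff}, bound the $B_n$-series directly with the Darboux formula, and shift the index in the $C_n$-series so that the leading constants $\pm\frac{1}{4}$ cancel, leaving coefficients of order $O(1/n)$ plus an $O(1/N)$ boundary term. The cancellation you flag as the main obstacle is exactly what the paper verifies, namely $A_n+C_{n+1}\sim\frac{\alpha+\beta+2}{4n}$, and it is automatic once the leading constants agree, so there is no genuine gap.
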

\begin{proof}
  Since the Darboux formula by itself does not provide sufficient control to
  deduce the desired uniform boundedness, we first employ a similar
  telescoping-like trick as before and then apply the Darboux
  formula. Using Lemma~\ref{lem:J_diff}, we write, for
  $n\in\N$ with $2n+\alpha+\beta>0$ and for $x\in[-1,1]$,
  \begin{align}\label{eq:J_diff2}
    \begin{aligned}
      &P_n^{(\alpha,\beta)}(x)P_{n-1}^{(\alpha+1,\beta+1)}(x)\\
      &\qquad=\frac{(n+\alpha+\beta+1)(n+\alpha+\beta+2)}
        {(2n+\alpha+\beta+1)(2n+\alpha+\beta+2)}
        P_n^{(\alpha+1,\beta+1)}(x)P_{n-1}^{(\alpha+1,\beta+1)}(x)\\
      &\qquad\qquad+\frac{(\alpha-\beta)(n+\alpha+\beta+1)}
        {(2n+\alpha+\beta)(2n+\alpha+\beta+2)}
        \left(P_{n-1}^{(\alpha+1,\beta+1)}(x)\right)^2\\
      &\qquad\qquad-\frac{(n+\alpha)(n+\beta)}
        {(2n+\alpha+\beta)(2n+\alpha+\beta+1)}
        P_{n-2}^{(\alpha+1,\beta+1)}(x)P_{n-1}^{(\alpha+1,\beta+1)}(x)\;.
    \end{aligned}
  \end{align}
  According to the Darboux formula, there exists a positive
  constant $C\in\R$ such
  that, for all $N\in\N$ with $2N+\alpha+\beta>0$ and for all
  $x\in[-1+\eps,1-\eps]$,
  \begin{displaymath}
    \left|N\sum_{n=N+1}^\infty
      \frac{(\alpha-\beta)(n+\alpha+\beta+1)}
        {(2n+\alpha+\beta)(2n+\alpha+\beta+2)}
        \left(P_{n-1}^{(\alpha+1,\beta+1)}(x)\right)^2\right|
    \leq N\sum_{n=N+1}^\infty \frac{C}{n^{2}}\leq C\;.
  \end{displaymath}
  For the remaining two series arising from~(\ref{eq:J_diff2}), we
  observe that, as $n\to\infty$,
  \begin{displaymath}
    \frac{(n+\alpha+\beta+1)(n+\alpha+\beta+2)}
    {(2n+\alpha+\beta+1)(2n+\alpha+\beta+2)}
    -\frac{(n+\alpha+1)(n+\beta+1)}
    {(2n+\alpha+\beta+2)(2n+\alpha+\beta+3)}
    \sim \frac{\alpha+\beta+2}{4n}
  \end{displaymath}
  and that, due to the Darboux formula, the constant
  $C\in\R$ can be chosen to satisfy, for all $N\in\N$ with
  $2N+\alpha+\beta>0$ and for all $x\in[-1+\eps,1-\eps]$,
  \begin{displaymath}
    \left|\frac{(N+\alpha+1)(N+\beta+1)}
      {(2N+\alpha+\beta+2)(2N+\alpha+\beta+3)}
      P_{N-1}^{(\alpha+1,\beta+1)}(x)P_{N}^{(\alpha+1,\beta+1)}(x)
    \right|\leq C N^{-1}
  \end{displaymath}
  as well as
  \begin{displaymath}
    \left|N\sum_{n=N+1}^\infty\frac{(\alpha+\beta+2)
        P_n^{(\alpha+1,\beta+1)}(x)P_{n-1}^{(\alpha+1,\beta+1)}(x)}{4n}\right|
    \leq C\;.
  \end{displaymath}
  Considering terms of the original series which correspond to $n\in\N$
  with $2n+\alpha+\beta\leq0$ separately,
  we conclude there exists a positive constant $D\in\R$ such that, for all
  $N\in\N$ and $x\in[-1+\eps,1-\eps]$,
  \begin{displaymath}
    \left|N\sum_{n=N+1}^\infty
      P_n^{(\alpha,\beta)}(x)P_{n-1}^{(\alpha+1,\beta+1)}(x)\right|
    \leq 3C+D\;.
  \end{displaymath}
  This establishes the claimed uniform boundedness.
\end{proof}
We are now in a position to prove the proposition stated below. While
it has some overlap with Theorem~\ref{thm:regularSL}, for most
$\alpha,\beta\in\R$ the result is not a straightforward
application of Theorem~\ref{thm:regularSL}. This is easily seen by
noting that
\begin{displaymath}
  \pi\left(n+\frac{\alpha+\beta+1}{2}\right)
\end{displaymath}
need not satisfy the asymptotics given in Proposition~\ref{propn:evals} as
$n\to\infty$. In particular, we move outside the setting of separated
homogeneous boundary conditions. The reason for having delayed the
proof of the next proposition until now is that it seems more
convenient to use Lemma~\ref{lem:J_unifbound2} for the second local uniform
bound as opposed to carefully integrating by parts as in the proof of
Proposition~\ref{propn:baseDN} and as for
establishing~\cite[Lemma~4.2]{foster_habermann}.
\begin{propn}\label{propn:generalbc}
  Fix $\alpha,\beta\in\R$. We have, for $\theta\in(0,\pi)$,
  \begin{displaymath}
    \lim_{N\to\infty}N\sum_{n=N+1}^\infty
    \frac{2\left(\cos\left(\left(n+\dfrac{\alpha+\beta+1}{2}\right)\theta
          -\left(\alpha+\dfrac{1}{2}\right)\dfrac{\pi}{2}\right)\right)^2}
    {n^2}
    =1\;.
  \end{displaymath}
\end{propn}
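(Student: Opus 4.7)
The plan is to use the double-angle identity
\begin{displaymath}
  2\left(\cos(A_n)\right)^2 = 1 + \cos(2A_n)
\end{displaymath}
with $A_n = \bigl(n+\tfrac{\alpha+\beta+1}{2}\bigr)\theta - \bigl(\alpha+\tfrac{1}{2}\bigr)\tfrac{\pi}{2}$, which splits the series into a non-oscillatory part and an oscillatory remainder. Writing $2A_n = 2n\theta + \phi$ where $\phi = (\alpha+\beta+1)\theta - (\alpha+\tfrac{1}{2})\pi$, the claim reduces to establishing the two limits
\begin{displaymath}
  \lim_{N\to\infty} N\sum_{n=N+1}^\infty \frac{1}{n^2} = 1
  \quad\text{and}\quad
  \lim_{N\to\infty} N\sum_{n=N+1}^\infty \frac{\cos(2n\theta + \phi)}{n^2} = 0.
\end{displaymath}

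For the first limit, I would use the same elementary telescoping argument employed in the proof of Proposition~\ref{propn:baseDN}, bracketing $1/n^2$ between $1/(n-1)-1/n$ and $1/n - 1/(n+1)$ to sandwich $N\sum_{n=N+1}^\infty 1/n^2$ between two quantities that each tend to $1$.

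For the second limit, the key observation is that since $\theta\in(0,\pi)$ we have $2\theta\notin 2\pi\Z$, so the partial sums
\begin{displaymath}
  T_M(N) = \sum_{n=N+1}^M \e^{\im(2n\theta+\phi)}
  = \e^{\im\phi}\,\e^{\im 2(N+1)\theta}\,
    \frac{1-\e^{\im 2(M-N)\theta}}{1-\e^{\im 2\theta}}
\end{displaymath}
are bounded uniformly in $M$ and $N$ by $2/|1-\e^{\im 2\theta}| = 1/|\sin\theta|$. Expanding the cosine by taking real parts and applying Abel summation with $b_n = 1/n^2$, using that $b_n - b_{n+1}$ is positive and telescopes to $b_{N+1}$, yields
\begin{displaymath}
  \left|\sum_{n=N+1}^\infty \frac{\cos(2n\theta+\phi)}{n^2}\right|
  \leq \frac{1}{|\sin\theta|}\sum_{n=N+1}^\infty(b_n - b_{n+1})
  = \frac{1}{|\sin\theta|(N+1)^2}\;,
\end{displaymath}
so the oscillatory piece is $O(1/N)$ and its contribution to the rescaled sum tends to zero. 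Combining these two limits gives exactly $1$, as required.

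I do not anticipate a real obstacle here: the identity $2\cos^2 = 1 + \cos(2\cdot)$ converts the problem into the classical dichotomy between a convergent tail of $\sum 1/n^2$ and an oscillatory Dirichlet-type tail that decays faster by one order. The only point needing a little care is to check that $\theta\in(0,\pi)$ really does prevent $2\theta$ from being a multiple of $2\pi$, which is immediate, and that the constant phase $\phi$ (which depends on $\alpha,\beta,\theta$) plays no role beyond producing the uniform bound above.
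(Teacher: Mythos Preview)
Your argument is correct and considerably more direct than the paper's. The paper does not use the double-angle identity at all; instead it routes through the Jacobi polynomials, defining $S_N(x)=N\sum_{n>N}(P_n^{(\alpha,\beta)}(x))^2/n$, showing via the Darboux formula and two uniform bounds (Lemmas~\ref{lem:J_unifbound1} and~\ref{lem:J_unifbound2}) plus Arzel\`{a}--Ascoli that $S_N$ converges locally uniformly, transferring this to $R_N(\theta)$, and then identifying the limit by a moment computation that exploits $u_n''=-\nu_n u_n$ through integration by parts. Your approach bypasses all of this machinery: once you write $2\cos^2 A_n=1+\cos(2n\theta+\phi)$, the problem reduces to the tail of $\sum 1/n^2$ and a Dirichlet-type oscillatory tail, handled by telescoping and Abel summation respectively. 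The paper's route is consistent with the moment-and-compactness framework used throughout (and in passing establishes local uniform convergence of $S_N$, though only the pointwise statement is invoked later), whereas your route is self-contained, requires no asymptotics for Jacobi polynomials, and in fact shows the stronger quantitative bound that the error is $O(1/N)$ for each fixed $\theta$. One small wording slip: the bound you derive on the oscillatory tail is $O(1/N^2)$, not $O(1/N)$; it is the \emph{rescaled} contribution that is $O(1/N)$.
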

\begin{proof}
  For $N\in\N_0$, let $S_N\colon(-1,1)\to\R$ be defined by
  \begin{displaymath}
    S_N(x)=N\sum_{n=N+1}^\infty
    \frac{\left(P_n^{(\alpha,\beta)}(x)\right)^2}{n}\;.
  \end{displaymath}
  Due to the identity~(\ref{eq:J_diff}) for the derivative of Jacobi
  polynomials, it is a consequence of 
  Lemma~\ref{lem:J_unifbound2} that, for all $x\in(-1,1)$,
  \begin{displaymath}
    S_N'(x)=N\sum_{n=N+1}^\infty
    \frac{n+\alpha+\beta+1}{n}P_n^{(\alpha,\beta)}(x)
      P_{n-1}^{(\alpha+1,\beta+1)}(x)\;.
  \end{displaymath}
  The Arzel\`{a}--Ascoli theorem together with
  Lemma~\ref{lem:J_unifbound1} as well as Lemma~\ref{lem:J_unifbound2}
  then imply that the functions $S_N$ converge
  locally uniformly along subsequences on the interval $(-1,1)$ as
  $N\to\infty$. From the
  Darboux formula, it follows that with $u_n\colon[0,\pi]\to\R$ for
  $n\in\N_0$ given by
  \begin{displaymath}
    u_n(\theta)=\cos\left(\left(n+\dfrac{\alpha+\beta+1}{2}\right)\theta
      -\left(\alpha+\dfrac{1}{2}\right)\dfrac{\pi}{2}\right)\;,
  \end{displaymath}
  the functions $R_N\colon(0,\pi)\to\R$ defined by
  \begin{displaymath}
    R_N(\theta)=N\sum_{n=N+1}^\infty
    \frac{2\left(u_n(\theta)\right)^2}{n^2}
  \end{displaymath}
  converge locally uniformly  along subsequences on $(0,\pi)$ as $N\to\infty$.
  Hence, we can apply a moment argument to show that the functions
  $R_N$ converge pointwise as $N\to\infty$ and to identify the
  continuous limit function on $(0,\pi)$.
  Set, for $n\in\N_0$,
  \begin{displaymath}
    \nu_n=\left(n+\dfrac{\alpha+\beta+1}{2}\right)^2\;.
  \end{displaymath}
  Proceeding as in the proof of
  Proposition~\ref{propn:baseDN}, we obtain, for all $n,k\in\N_0$,
  \begin{displaymath}
    \nu_n\int_0^\pi\theta^k\left(u_n(\theta)\right)^2\dd\theta
    =-\left.\theta^ku_n(\theta)u_n'(\theta)\right|_0^\pi
    +\int_0^\pi\theta^k\left(u_n'(\theta)\right)^2\dd\theta
    +\int_0^\pi k\theta^{k-1}u_n(\theta)u_n'(\theta)\dd\theta
  \end{displaymath}
  as well as
  \begin{displaymath}
    \int_0^\pi\theta^k\left(u_n'(\theta)\right)^2\dd\theta
    +\nu_n\int_0^\pi\theta^k\left(u_n(\theta)\right)^2\dd\theta
    =\frac{\pi^{k+1}\left(\left(u_n'(\pi)\right)^2
        +\nu_n\left(u_n(\pi)\right)^2\right)}{k+1}
    =\frac{\pi^{k+1}\nu_n}{k+1}\;.
  \end{displaymath}
  Since we have, for $k\in\N_0$ fixed and as $n\to\infty$,
  \begin{displaymath}
    \left.\theta^ku_n(\theta)u_n'(\theta)\right|_0^\pi
    =O(n)\quad\text{and}\quad
    \int_0^\pi k\theta^{k-1}u_n(\theta)u_n'(\theta)\dd\theta
    =O(1)\;,
  \end{displaymath}
  it follows that, for $k\in\N_0$ fixed and as $n\to\infty$,
  \begin{displaymath}
    2\nu_n\int_0^\pi\theta^k\left(u_n(\theta)\right)^2\dd\theta
    =\frac{\pi^{k+1}\nu_n}{k+1}+O(n)\;.
  \end{displaymath}
  Observing that $\nu_n=O\left(n^2\right)$ as $n\to\infty$,
  we deduce
  \begin{displaymath}
    \int_0^\pi\frac{2\theta^k\left(u_n(\theta)\right)^2}{n^2}\dd\theta
    =\frac{\pi^{k+1}}{(k+1)n^2}+O\left(n^{-3}\right)\;,
  \end{displaymath}
  which by Fubini's theorem implies that, for all $k\in\N_0$
  \begin{displaymath}
    \lim_{N\to\infty}\int_0^\pi\theta^kR_N(\theta)\dd\theta
    =\frac{\pi^{k+1}}{k+1}=\int_0^\pi \theta^k\dd\theta\;.
  \end{displaymath}
  As the limit moments agree with the moments of the claimed limit
  function, the result follows by continuity.
\end{proof}
We close by proving Theorem~\ref{thm:jacobi} on the asymptotic error
in the eigenfunction expansion for the Green's function associated
with the Jacobi polynomials and by showing
that~\cite[Theorem~1.5]{semicircle} is a special case thereof.
\begin{proof}[Proof of Theorem~\ref{thm:jacobi}]
  Recall that due to the asymptotics~(\ref{eq:Gamma_asymp}) for the
  Gamma function, we have, as $n\to\infty$,
  \begin{displaymath}
    M_n\sim\frac{2^{\alpha+\beta}}{n}\;.
  \end{displaymath}
  Moreover, we note that, for $\theta\in(0,\pi)$ and $x=\cos(\theta)$,
  \begin{displaymath}
    \left(\sin\left(\frac{\theta}{2}\right)\right)^{-2\alpha-1}
    \left(\cos\left(\frac{\theta}{2}\right)\right)^{-2\beta-1}
    =2^{\alpha+\beta+1}(1-x)^{-\alpha-\frac{1}{2}}(1+x)^{-\beta-\frac{1}{2}}\;.
  \end{displaymath}
  The Darboux formula and Proposition~\ref{propn:generalbc} then
  yield, for all $x\in(-1,1)$,
  \begin{displaymath}
    \lim_{N\to\infty}N\sum_{n=N+1}^\infty
    \frac{\left(P_n^{(\alpha,\beta)}(x)\right)^2}{M_n\lambda_n}
    =\frac{(1-x)^{-\alpha-\frac{1}{2}}(1+x)^{-\beta-\frac{1}{2}}}{\pi}\;,
  \end{displaymath}
  which establishes the claimed convergence on the diagonal. For the
  convergence away from the diagonal, we exploit a shifted
  Christoffel--Darboux type formula which ensures that all terms are
  well-defined. For the Jacobi polynomials, we have, by Stirling's
  formula~(\ref{eq:stirling}) and as $n\to\infty$,
  \begin{displaymath}
    K_n=\frac{\Gamma(2n+\alpha+\beta+1)}
    {2^n n!\, \Gamma(n+\alpha+\beta+1)}
    \sim\frac{2^{n+\alpha+\beta}}{\sqrt{\pi n}}\;.
  \end{displaymath}
  Using the Darboux formula, we deduce that there exists a constant
  $C\in\R$ such 
  that, for sufficiently large $n\in\N$ and for $x,y\in(-1,1)$,
  \begin{equation}\label{eq:J_CDTlim}
    \left|\frac{K_n}{K_{n+1}M_n}\frac{D_{n+1}(x,y)}{\lambda_n}\right|
    \leq C n^{-2}\;.
  \end{equation}
  Since the Jacobi polynomials satisfy the three-term
  recurrence relation~\cite[4.5.1]{szego} for all $\alpha,\beta\in\R$,
  we can adapt the proof of Proposition~\ref{propn:CDT}, by choosing
  the starting point of the sum large enough to ensure that all
  summands are well-defined, to show
  that, for sufficiently large $N\in\N$,
  \begin{align}\label{eq:J_CDT}
    \begin{aligned}
      &(x-y)\sum_{n=N+1}^\infty
      \frac{P_n^{(\alpha,\beta)}(x)P_n^{(\alpha,\beta)}(y)}{M_n\lambda_n}\\
      &\qquad=\sum_{n=N}^\infty\frac{K_{n}}{K_{n+1}M_{n}}D_{n+1}(x,y)
      \left(\frac{1}{\lambda_{n}}-\frac{1}{\lambda_{n+1}}\right)
      -\frac{K_N}{K_{N+1}M_N}\frac{D_{N+1}(x,y)}{\lambda_N}\;.
    \end{aligned}
  \end{align}
  We further have, as $n\to\infty$,
  \begin{displaymath}
    \frac{1}{\lambda_n}-\frac{1}{\lambda_{n+1}}
    =\frac{2n+\alpha+\beta+2}
    {n(n+1)(n+\alpha+\beta+1)(n+\alpha+\beta+2)}
    \sim\frac{2}{n^3}\;.
  \end{displaymath}
  Hence, by the Darboux formula, the constant $C\in\R$ can be chosen
  such that, for
  sufficiently large $n\in\N$ and for $x,y\in(-1,1)$,
  \begin{displaymath}
    \left|\frac{K_n}{K_{n+1}M_n}D_{n+1}(x,y)
      \left(\frac{1}{\lambda_n}-\frac{1}{\lambda_{n+1}}\right)\right|
    \leq Cn^{-3}\;,
  \end{displaymath}
  which together with~(\ref{eq:J_CDTlim}) and (\ref{eq:J_CDT}) implies
  that, for $x\not=y$ and for all $\gamma<2$,
  \begin{displaymath}
    \lim_{N\to\infty}N^\gamma \sum_{n=N+1}^\infty
      \frac{P_n^{(\alpha,\beta)}(x)P_n^{(\alpha,\beta)}(y)}{M_n\lambda_n}=0\;,
  \end{displaymath}
  as claimed.
\end{proof}
Note that Corollary~\ref{cor:jacobi} is an immediate consequence of
Theorem~\ref{thm:jacobi} because of~(\ref{eq:J_special}), which gives, for
$n\in\N$ and $x,y\in(-1,1)$,
\begin{displaymath}
  \frac{T_n(x)T_n(y)}{n^2}
  =\frac{\pi}{2}\left(\frac{n!(2n)(n-1)!}
  {\left(\Gamma\left(n+\frac{1}{2}\right)\right)^2}
  \frac{P_n^{(-\frac{1}{2},-\frac{1}{2})}(x)
  P_n^{(-\frac{1}{2},-\frac{1}{2})}(y)}{n^2}\right)
\end{displaymath}
as well as
\begin{displaymath}
  \frac{U_n(x)U_n(y)}{n(n+2)}
  =\frac{\pi}{2}\left(\frac{n!(2n+2)(n+1)!}
  {4\left(\Gamma\left(n+\frac{3}{2}\right)\right)^2}
  \frac{P_n^{(\frac{1}{2},\frac{1}{2})}(x)
  P_n^{(\frac{1}{2},\frac{1}{2})}(y)}{n(n+2)}\right)\;.
\end{displaymath}
From Theorem~\ref{thm:jacobi}, we can further
deduce~\cite[Theorem~1.5]{semicircle} which states that, for $x,y\in[-1,1]$, 
\begin{displaymath}
  \lim_{N\to\infty}N\sum_{n=N}^\infty\frac{2n+1}{2}
      \int_{-1}^xP_n(z)\dd z
      \int_{-1}^yP_n(z)\dd z=
  \begin{cases}
    \dfrac{\sqrt{1-x^2}}{\pi} & \mbox{if } x=y\\[0.7em]
    0 & \mbox{if } x\not=y
  \end{cases}\;.
\end{displaymath}
Due to~(\ref{eq:J_special}) and
(\ref{eq:J_diff}), we have, for $n\in\N$ and $x\in[-1,1]$,
\begin{displaymath}
  \frac{n-1}{2}\int_{-1}^xP_{n-1}(z)\dd z
  =P_{n}^{(-1,-1)}(x)\;,
\end{displaymath}
which yields, for $N\in\N$,
\begin{displaymath}
  N\sum_{n=N}^\infty\frac{2n+1}{2}
      \int_{-1}^xP_n(z)\dd z
      \int_{-1}^yP_n(z)\dd z=
  N\sum_{n=N+1}^\infty\frac{2(2n-1)}{(n-1)^2}
      P_{n}^{(-1,-1)}(x)P_{n}^{(-1,-1)}(y)\;.
\end{displaymath}
This together with Theorem~\ref{thm:jacobi} implies the previous result
since, for $n\in\N$ with $n\geq 2$,
\begin{displaymath}
  \frac{2n!(2n-1)(n-2)!}{(n-1)!(n-1)!}\frac{1}{n(n-1)}
  =\frac{2(2n-1)}{(n-1)^2}\;,
\end{displaymath}
and as $P_{n}^{(-1,-1)}(-1)=P_{n}^{(-1,-1)}(1)=0$ for all $n\in\N$.
%%%%%%%%%%%%%%%%%%%%%%%%%%%%%%%%%%%%%%%%%%%%%%%%%%%%%%%%%%%%%%%%%%%%%%%%%%%%%%% 
\bibliographystyle{plain}
\bibliography{references}

\end{document}